\numberwithin{figure}{section}
\numberwithin{table}{section}
\numberwithin{equation}{section}
\newtheorem{thm}{Theorem}[section]
\newtheorem{lem}[thm]{Lemma}
\newtheorem{cor}[thm]{Corollary}
\newtheorem{remark}{Remark}[section]
\newcommand{\C}{\mathbb{C}}
\newcommand{\N}{\mathbb{N}}
\newcommand{\R}{\mathbb{R}}
\newcommand{\Z}{\mathbb{Z}}
\newcommand{\B}{\mathcal B}
\newcommand{\CalC}{\mathcal C}
\newcommand{\w}{\omega}
\newcommand{\alam}{a}
\newcommand\interior{int}
\newcommand{\Lc}{\mathcal L}
\newcommand{\be}{b}
\newcommand{\wrm}{\mathrm{w}}
\newcommand{\erm}{\mathrm{e}}
\renewcommand\Re{\operatorname{Re}}
\def\cprime{\char"7E }
\DeclareMathOperator{\err}{err}
\DeclareMathOperator{\Lip}{Lip}
\begin{document}

\title[Computation of Hausdorff Dimension]
{$C^m$ Eigenfunctions of Perron-Frobenius Operators and a 
New Approach to Numerical Computation of Hausdorff Dimension:
Applications in $\R^1$}
\author{Richard S. Falk}
\address{Department of Mathematics,
Rutgers University, Piscataway, NJ 08854}
\email{falk@math.rutgers.edu}
\urladdr{http://www.math.rutgers.edu/\char'176falk/}

%    author two information
\author{Roger D. Nussbaum}
\address{Department of Mathematics,
Rutgers University, Piscataway, NJ 08854}
\email{nussbaum@math.rutgers.edu}
\urladdr{http://www.math.rutgers.edu/\char'176nussbaum/}
\thanks{The work of the second author was supported by
NSF grant DMS-1201328.}
\subjclass[2000]{Primary 11K55, 37C30; Secondary: 65J10}
\keywords{Hausdorff dimension, positive transfer operators, 
continued fractions}
\date{August 8, 2017}

\begin{abstract}
We develop a new approach to the computation of the Hausdorff dimension of
  the invariant set of an iterated function system or IFS.  In the one
  dimensional case that we consider here, our methods require only $C^3$
  regularity of the maps in the IFS.  The key idea, which has been known in
  varying degrees of generality for many years, is to associate to the IFS a
  parametrized family of positive, linear, Perron-Frobenius operators
  $L_s$. The operators $L_s$ can typically be studied in many different Banach
  spaces. Here, unlike most of the literature, we study $L_s$ in a Banach
  space of real-valued, $C^k$ functions, $k \ge 2$. We note that $L_s$ is not
  compact, but has essential spectral radius $\rho_s$ strictly less than the
  spectral radius $\lambda_s$ and possesses a strictly positive $C^k$
  eigenfunction $v_s$ with eigenvalue $\lambda_s$.  Under appropriate
  assumptions on the IFS, the Hausdorff dimension of the invariant set of the
  IFS is the value $s=s_*$ for which $\lambda_s =1$.  This eigenvalue problem
  is then approximated by a collocation method using continuous piecewise
  linear functions.  Using the theory of positive linear operators and
  explicit a priori bounds on the derivatives of the strictly positive
  eigenfunction $v_s$, we give rigorous upper and lower bounds for the
  Hausdorff dimension $s_*$, and these bounds converge to $s_*$ as the mesh
  size approaches zero.

\end{abstract}

\maketitle

\date{August 8, 2017}

\section{Introduction}
\label{sec:intro}

Our interest in this paper is in finding rigorous estimates for the Hausdorff
dimension of invariant sets for iterated function systems or IFS's.  The case
of graph directed IFS's (see \cite{I} and \cite{H}) is also of great interest
and can be studied by our methods, but for simplicity we shall restrict
attention here to the IFS case.

Let $D \subset \R$ be a nonempty compact set and $\theta_j: D \to D$, $1 \le j
\le m$, a contraction mapping, i.e., a Lipschitz mapping with Lipschitz
constant $\Lip(\theta_j)$, satisfying $\Lip(\theta_j):= c_j <1$.  If $m <
\infty$ and the above assumption holds, it is known that there exists a
unique, compact, nonempty set $C \subset D$ such that $C = \cup_{j =1}^m
\theta_j(C)$.  The set $C$ is called the invariant set for the IFS $\{\theta_j
\, | \, 1 \le j \le m\}$.
%If $m = \infty$ and $\sup \{c_j \, |
%\, 1 \le j \le m\} = c <1$, there is a naturally defined nonempty invariant
%set $C \subset D$ such that $C = \cup_{j =1}^\infty \theta_j(C)$, but $C$ need
%not be compact.  It is useful to note that the Lipschitz condition of the IFS
%can be weakened, and we address this in a subsequent section (cf. (H5.1) in
%Section~\ref{sec:1d-deriv}).

Although we shall eventually specialize, it may be helpful to describe
initially some functional analysis results in the generality of the
previous paragraph. Let $H$ be a bounded, open subset of $\R$, which
is a finite union of open intervals, and let $C^k(\bar H)$ denote the
real Banach space of $C^k$ real-valued maps, all of whose derivatives
of order $\nu \le k$ extend continuously to $\bar H$.  For a given
positive integer $N$, assume that $g_j: \bar H \to (0, \infty)$ are
strictly positive $C^N$ functions for $1 \le j \le m < \infty$ and
$\theta_j: \bar H \to \bar H$, $1 \le j \le m$, are $C^N$ maps and
contractions.  For $s >0$ and integers $k$, $0 \le k \le N$, one can
define a bounded linear map $L_{s,k}: C^k(\bar H) \to C^k(\bar H)$ by
the formula
\begin{equation}
\label{intro1.2}
(L_{s,k} w)(x) = \sum_{j=1}^m [g_j(x)]^s w(\theta_j(x)).
\end{equation}
Linear maps like $L_{s,k}$ are sometimes called positive transfer
operators or Perron-Frobenius operators and arise in many contexts
other than computation of Hausdorff dimension: see, for example,
\cite{Baladi}. If $r(L_{s,k})$ denotes the spectral radius of
$L_{s,k}$, then $\lambda_s = r(L_{s,k})$ is positive and independent of
$k$ for $0 \le k \le N$; and $\lambda_s$ is an algebraically simple
eigenvalue of $L_{s,k}$ with a corresponding unique, normalized strictly
positive eigenfunction $v_s \in C^N(\bar H)$.  Furthermore, the map
$s \mapsto \lambda_s$ is continuous.   If $\sigma(L_{s,k}) \subset \C$ denotes
the spectrum of the complexification of $L_{s,k}$, $\sigma(L_{s,k})$ depends
on $k$, but for $1 \le k \le N$,
\begin{equation}
\label{intro1.3}
\sup\{|z|: z \in \sigma(L_{s,k})\setminus\{\lambda_s\}\} < \lambda_s.
\end{equation}
If $k=0$, the strict inequality in \eqref{intro1.3} may fail. A more precise
version of the above result is stated in Theorem~\ref{thm:1.1} of this paper
and Theorem~\ref{thm:1.1} is a special case of results in \cite{E}.  The
method of proof involves ideas from the theory of positive linear operators,
particularly generalizations of the Kre{\u\i}n-Rutman theorem to noncompact
linear operators; see \cite{Krein-Rutman}, \cite{Bonsall},
\cite{Schaefer-Wolff}, \cite{L}, and \cite{Mallet-Paret-Nussbaum}. We do not
use the thermodynamic formalism (see \cite{Ruelle}) and often our operators
cannot be studied in Banach spaces of analytic functions.

The linear operators which are relevant for the computation of Hausdorff
dimension comprise a small subset of the transfer operators described in
\eqref{intro1.2}, but the analysis problem which we shall consider here can be
described in the generality of \eqref{intro1.2} and is of interest in this
more general context. We want to find rigorous methods to estimate
$r(L_{s,k})$ accurately and then use these methods to estimate $s_*$, where,
in our applications, $s_*$ will be the unique number $s \ge 0$ such that
$r(L_{s,k})=1$.  Under further assumptions, we shall see that $s_*$ equals
$\dim_H(C)$, the Hausdorff dimension of the invariant set associated to the
IFS.  This observation about Hausdorff dimension has been made, in varying
degrees of generality by many authors. See, for example, \cite{Bumby1},
\cite{Bumby2}, \cite{Bowen}, \cite{Cusick1}, \cite{Cusick2}, \cite{Falconer},
\cite{Good}, \cite{Hensley1}, \cite{Hensley2}, \cite{Hensley3},
\cite{J}, \cite{Jenkinson}, \cite{Jenkinson-Pollicott},
\cite{MR1902887}, \cite{H}, \cite{Mauldin-Urbanski}, \cite{N-P-L},
\cite{Ruelle}, \cite{Ruelle2}, \cite{Rugh}, and \cite{Schief}.

In the applications in this paper, we shall assume, for simplicity, that
$H$ is a bounded open interval, that $\theta_j: \bar
H \to \bar H$ is a $C^N$ contraction mapping, where $N \ge 3$, (or
more generally satisfies (H5.1)) and $\theta_j^{\prime}(x) \neq 0$ for
all $x \in \bar H$. In the notation of \eqref{intro1.2}, we define
$g_j(x) = |\theta_j^{\prime}(x)|$.  It is often natural to assume that $H$ is
a finite union of open intervals, and our methods apply with no essential
change to this case.

Given the existence of a strictly positive $C^N$ eigenfunction $v_s$
for \eqref{intro1.2}, we show in Section~\ref{sec:1d-deriv} for $1 \le
p \le 3$, that one can obtain explicit upper and lower bounds for the
quantity $D^p v_s(x)/v_s(x)$ for $x \in \bar H$, where $D^p$ denotes
the $p$-th derivative of $v_s$. Such bounds can also be obtained for
$p>3$, but calculations become more onerous.  In the important special
case that $\theta_j(x)$ is of the form $(x+b_j)^{-1}$, where $b_j >0$ and
$g_j(x) = |\theta_j^{\prime}(x)|$, we obtain in
Section~\ref{sec:mobius} sharp estimates on the quantity $D^p
v_s(x)/v_s(x)$ for all $p \ge 1$ and all $x \in \bar H$. These
estimates play a crucial role in allowing us to obtain rigorous upper
and lower bounds for the Hausdorff dimension.

The basic idea of our numerical scheme is to cover $\bar H$ by nonoverlapping
intervals of length $h$.  We remark that our collection of intervals need not
be a {\it Markov partition} for our IFS; compare the use of {\it Markov
partitions} in \cite{McMullen}.  We then approximate the strictly positive,
$C^2$ eigenfunction $v_s$ by a continuous piecewise linear function.  Using
explicit bounds on the first and second derivatives of $v_s$, we are able to
associate to the operator $L_{s,k}$, square matrices $A_s$ and $B_s$, which
have nonnegative entries and also have the property that $r(A_s) \le \lambda_s
\le r(B_s)$. We note that using a piecewise linear approximation to $v_s$, as
opposed to a piecewise constant approximation, leads to a considerable
increase in accuracy and speed of convergence. A key role here is played by an
elementary fact which is not as well known as it should be.  If $M$ is a
nonnegative matrix and $\wrm$ is a strictly positive vector and $M \wrm \le
\lambda \wrm$, (coordinate-wise), then $r(M) \le \lambda$.  An analogous
statement is true if $M \wrm \ge \lambda \wrm$. We emphasize that our approach
is robust and allows us to study the case $H \subset \R$ when
$\theta_j(\cdot)$, $1 \le j \le m$, is only $C^3$.

If $s_*$ denotes the unique value of $s$ such that $r(L_{s_*}) =
\lambda_{s_*} = 1$, so that $s_*$ is the Hausdorff dimension of the
invariant set for the IFS under study, we proceed as follows.  If we
can find a number $s_1$ such that $r(B_{s_1}) \le 1$, then, since the
map $s \mapsto \lambda_s$ is decreasing, $\lambda_{s_1} \le r(B_{s_1})
\le 1$, and we can conclude that $s_* \le s_1$.  Analogously, if we
can find a number $s_2$ such that $r(A_{s_2}) \ge 1$, then
$\lambda_{s_2} \ge r(A_{s_2}) \ge 1$, and we can conclude that $s_*
\ge s_2$.  By choosing the mesh size for our approximating piecewise
polynomials to be sufficiently small, we can make $s_1-s_2$ small,
providing a good estimate for $s_*$.  For a given $s$, $r(A_s)$ and
$r(B_s)$ are easily found by variants of the power method for
eigenvalues, since (see Section~\ref{sec:compute-sr}) the largest eigenvalue has
multiplicity one and is the only eigenvalue of its modulus.

If the coefficients $g_j(\cdot)$ and the maps $\theta_j(\cdot)$ in
\eqref{intro1.2} are $C^N$ with $N >2$, it is natural to approximate
$v_s(\cdot)$ by piecewise polynomials of degree $N-1$.
The corresponding matrices $A_s$ and $B_s$ may no longer have all
nonnegative entries and the arguments of this paper are no longer
directly applicable.  However, we hope to prove in a future paper
that inequalities like $r(A_s) \le \lambda_s \le r(B_s)$ remain true and
lead to much improved upper and lower bounds for $r(L_s)$.  Heuristic
evidence for this assertion is given in Table~\ref{tb:t2} of 
Section~\ref{subsec:cantor-num}.

We illustrate our new approach by first considering in 
Section~\ref{sec:1dexps} the computation of the Hausdorff dimension
of invariant sets in $[0,1]$ arising from classical continued fraction
expansions.  In this much studied case, one defines $\theta_m =
1/(x+m)$, for $m$ a positive integer and $x \in [0,1]$; and for a
subset $\B \subset \N$, one considers the IFS $\{\theta_m \, | \, m
\in \B\}$ and seeks estimates on the Hausdorff dimension of the
invariant set $C =C(\B)$ for this IFS.  This problem has previously
been considered by many authors. See \cite{Bourgain-Kontorovich},
\cite{Bumby1}, \cite{Bumby2}, \cite{Good}, \cite{Hensley1},
\cite{Hensley2}, \cite{Hensley3}, \cite{Jenkinson},
\cite{Jenkinson-Pollicott}, and \cite{Heinemann-Urbanski}.  In this case,
\eqref{intro1.2} becomes
\begin{equation*}
(L_{s,k}w)(x) = \sum_{m \in \B} \Big(\frac{1}{x+m}\Big)^{2s} 
w\Big(\frac{1}{x+m}\Big), \qquad 0 \le x \le 1,
\end{equation*}
and one seeks a value $s \ge 0$ for which $\lambda_s:= r(L_{s,k}) =1$.
Table~\ref{tb:t1} in Section~\ref{subsec:cantor-num} gives upper and lower
bounds for the value $s$ such that $\lambda_s =1$ for various sets
$\B$. Jenkinson and Pollicott \cite{Jenkinson-Pollicott} use a completely
different method and obtain, when $|\B|$ is small, high accuracy estimates for
$\dim_H(C(\B))$, in which successive approximations converge at a
super-exponential rate.  It is less clear (see \cite{Jenkinson})
how well the approximation scheme in \cite{Jenkinson-Pollicott} or
\cite{Jenkinson} works when $|\B|$ is moderately large or when different real
analytic functions $\hat \theta_j: [0,1] \to [0,1]$ are used.  Here, in the
one dimensional case, we present an alternative approach with much wider
applicability that only requires the maps in the IFS to be $C^3$.  As an
illustration, we consider in Section~\ref{subsec:lowreg} perturbations of the
IFS for the middle thirds Cantor set for which the corresponding contraction
maps are $C^3$, but not $C^4$.

It is also worth comparing the approach used in our paper with that
of McMullen \cite{McMullen}. Superficially the methods seem different,
but there are underlying connections.  We exploit the existence of a
$C^k$, strictly positive eigenfunction $v_s$ of \eqref{intro1.2} with
eigenvalue $\lambda_s$ equal to the spectral radius of $L_{s,k}$; and we
observe that explicit bounds on derivatives of $v_s$ can be
exploited to prove convergence rates on numerical approximation schemes
which approximate $\lambda_s$.  McMullen does not explicitly mention
the operator $L_{s,k}$ or the analogue of $L_{s,k}$ for graph directed
iterated function systems, and he does not use $C^k$, strictly positive
eigenfunctions of equations like \eqref{intro1.2}.  Instead, he exploits
finite positive measures $\mu$ which are called {\it $\mathcal{F}-$invariant
densities of dimension $\delta$.} If $s_*$ is a value of $s$ for which
the above eigenvalue $\lambda_s =1$, then in our context the measure $\mu$
is an eigenfunction of the Banach space adjoint $(L_{s_*,0})^*$ with
eigenvalue $1$, and our $s_*$ corresponds to $\delta$ above.  Standard
arguments using weak$^*$ compactness, the Schauder-Tychonoff fixed
point theorem, and the Riesz representation theorem imply the existence
of a regular, finite, positive, complete measure $\mu$, defined on a
$\sigma$-algebra containing all Borel subsets of the underlying space
$\bar H$ and such that $(L_{s_*,0})^* \mu = \mu$ and $\int v_s \, d \mu =1$.

McMullen also uses refinements of {\it Markov partitions}, while our
partitions, both here and in a sequel \cite{hdcomp2} in which we consider
two dimensional problems, need not be Markov. However, in the end, both
approaches generate (different) $n \times n$ nonnegative matrices $M_s$,
parametrized by a parameter $s$ and both methods use the spectral radius of
$M_s$ to approximate the desired Hausdorff dimension $s_*$.  McMullen's
matrices are obtained by approximating certain nonconstant functions defined
on a refinement of the original Markov partition by piecewise constant
functions defined with respect to this refinement.  We approximate by
linear functions on each subset in our partition in dimension one and
(see \cite{hdcomp2}) by bilinear functions defined on each subset
of our partition in dimension two.  As we show below, by exploiting
estimates on higher derivatives of $v_s(\cdot)$, our methods give
explicit upper and lower bounds for $s_*$ and more rapid convergence
to $s_*$ than one obtains using piecewise constant approximations.

The square matrices $A_s$ and $B_s$ mentioned above and described in more
detail in Section~\ref{sec:1dexps} have nonnegative entries and satisfy
$r(A_s) \le \lambda_s \le r(B_s)$.  To apply standard numerical methods, it is
useful to know that all eigenvalues $\mu \neq r(A_s)$ of $A_s$ satisfy $|\mu|
< r(A_s)$ and that $r(A_s)$ has algebraic multiplicity one and that
corresponding results hold for $r(B_s)$.  Such results are proved in
Section~\ref{sec:compute-sr} when the mesh size, $h$, is sufficiently small.
Note that this result does not follow from the standard theory of nonnegative
matrices, since $A_s$ and $B_s$ typically have zero columns and are not
primitive. We also prove that $r(A_s) \le r(B_s) \le (1 + C_1 h^2) r(A_s)$,
where the constant $C_1$ can be explicitly estimated.  In
Section~\ref{sec:logconvex}, we prove that the map $s \mapsto \lambda_s$ is
log convex and strictly decreasing; and the same result is proved for $s
\mapsto r(M_s)$, where $M_s$ is a naturally defined matrix such that $A_s \le
M_s \le B_s$.

In a subsequent paper \cite{hdcomp2}, we consider the computation of the
Hausdorff dimension of some invariant sets arising for complex continued
fractions.  Suppose that $\B$ is a subset of $I_1 = \{m+ni \, | \, m \in \N,
n\in \Z\}$, and for each $\be \in \B$, define $\theta_{\be}(z) =
(z+\be)^{-1}$. Note that $\theta_{\be}$ maps $\bar G = \{z \in \C \, | \,
|z-1/2| \le 1/2\}$ into itself.  We are interested in the Hausdorff dimension
of the invariant set $C = C(\B)$ for the IFS $\{\theta_b \, |\, \be \in \B\}$.
This is a two dimensional problem and we allow the possibility that $\B$ is
infinite. In general (contrast work in \cite{Jenkinson-Pollicott} and
\cite{Jenkinson}), it does not seem possible in this case to replace
$L_{s,k}$, $k \ge 2$, by an operator $\Lambda_s$ acting on a Banach space of
analytic functions of one complex variable and satisfying $r(\Lambda_s) =
r(L_{s,k})$.  Instead, we work in $C^2(\bar G)$ and apply our methods to
obtain rigorous upper and lower bounds for the Hausdorff dimension
$\dim_H(C(\B))$ for several examples. The case $\B = I_1$ has been of
particular interest and is one motivation for the paper \cite{hdcomp2}.  In
\cite{Gardner-Mauldin}, Gardner and Mauldin proved that $d:= \dim_H(C(I_1))
<2$. In Theorem 6.6 of \cite{Mauldin-Urbanski}, Mauldin and Urbanski proved
that $1.2484 <d \le 1.885$, and in \cite{Priyadarshi}, Priyadarshi proved that
$d \ge 1.78$.  We prove that $1.85550 \le d \le 1.85589$.  A combination
of the results in this paper plus the subsequent paper \cite{hdcomp2}
can be found in a preliminary version published on the arXiv \cite{hdcomplt}.

Although many of the key results in the paper are described above, an outline
summarizing the sections may be helpful. In Section~\ref{sec:prelim}, we
recall the definition of Hausdorff dimension and present some mathematical
preliminaries. In Section~\ref{sec:1dexps}, we present the details of our
approximation scheme for Hausdorff dimension, explain the crucial role played
by estimates on derivatives of order $\le 2$ of $v_s$, and give the
aforementioned estimates for Hausdorff dimension.  We emphasize that this is a
feasibility study. We have limited the accuracy of our approximations to what
is easily found using the standard precision of {\it Matlab} and have run only
a limited number of examples, using mesh sizes that allow the programs to run
fairly quickly. In addition, we have not attempted to exploit the special
features of our problems, such as the fact that our matrices are sparse.
Thus, it is clear that one could write a more efficient code that would also
speed up the computations.  However, the {\it Matlab} programs we have
developed are available on the web at {\tt
  www.math.rutgers.edu/\char'176falk/hausdorff/codes.html}, and we hope other
researchers will run other examples of interest to them.

The theory underlying the work in Section~\ref{sec:1dexps} is deferred to
Sections~\ref{sec:exist}--\ref{sec:logconvex}.  In Section~\ref{sec:exist} we
describe some results concerning existence of $C^m$ positive eigenfunctions
for a class of positive (in the sense of order-preserving) linear operators.
In Section~\ref{sec:1d-deriv}, we derive explicit bounds on the derivatives of
the eigenfunction $v_s$ of $L_s$ and in  Section~\ref{sec:mobius}, we show how
much sharper bounds on the derivatives of the eigenfunction can be obtained
when the maps $\theta_b$ are M\"obius transformations.
In Section~\ref{sec:compute-sr}, we verify some spectral properties of the
approximating matrices which justify standard numerical algorithms for
computing their spectral radii. Finally, in Section~\ref{sec:logconvex}, we
show the log convexity of the spectral radius $r(L_s)$, which we exploit in
our numerical approximation scheme.

\section{Preliminaries}
\label{sec:prelim}
We recall the definition of the Hausdorff dimension, $\dim_H(K)$, of
a subset $K \subset \R^N$.  To do so, we first define for a given $s \ge0$
and each set $K \subset \R^N$, 
\begin{equation*}
H_{\delta}^s(K) = \inf\{\sum_i |U_i|^s: \{U_i\} \text{ is a } \delta \text{ cover
of } K\},
\end{equation*}
where $|U|$ denotes the diameter of $U$ and a countable collection $\{U_i\}$
of subsets of $\R^N$ is a $\delta$-cover of $K \subset \R^N$ if
$K \subset \cup_i U_i$ and $0 < |U_i| < \delta$ for all $i$.  We then define
the $s$-dimensional Hausdorff measure
\begin{equation*}
H^s(K) = \lim_{\delta \rightarrow 0+} H_{\delta}^s(K).
\end{equation*}
Finally, we define the Hausdorff dimension of $K$, $\dim_H(K)$, as
\begin{equation*}
\dim_H(K) = \inf\{s: H^s(K) =0\}.
\end{equation*}

We now state the main result connecting Hausdorff dimension to the spectral
radius of the map defined by \eqref{intro1.2}.  To do so, we first define the
concept of an {\it infinitesimal similitude} (sometimes called a conformal
map).  Let $(S,d)$ be a perfect metric space. If $\theta:S \to S$,
then $\theta$ is an infinitesimal similitude at $t \in S$ if for any sequences
$(s_k)_k$ and $(t_k)_k$ with $s_k \neq t_k$ for $k \ge 1$ and $s_k \rightarrow
t$, $t_k \rightarrow t$, the limit
\begin{equation*}
\lim_{k \rightarrow \infty} \frac{d(\theta(s_k), \theta(t_k)}{d(s_k,t_k)}
=: (D \theta)(t)
\end{equation*}
exists and is independent of the particular sequences 
$(s_k)_k$ and $(t_k)_k$.  Furthermore, $\theta$ is an infinitesimal similitude
on $S$ if $\theta$ is an infinitesimal similitude at $t$ for all $t \in S$.

This concept generalizes the concept of affine linear similitudes, which
are affine linear contraction maps $\theta$ satisfying
for all $x,y \in \R^n$
\begin{equation*}
d(\theta(x), \theta(y)) = c d(x,y), \quad c \neq 0.
\end{equation*}
In particular, the examples discussed in this paper, such as maps
of the form $\theta(x) = 1/(x+m)$, with $m$ a positive integer,
are infinitesimal similitudes. More generally, if $S$ is a compact subset
of $\R^1$ and $\theta:S \to S$ extends to a $C^1$ map defined on an
open neighborhood of $S$ in $\R^1$, then $\theta$ is an infinitesimal
similitude.

\begin{thm} (Theorem 1.2 of \cite{N-P-L}.)
Let $\theta_i:S \to S$ for $1 \le i \le N$ be infinitesimal similitudes
and assume that the map $t \mapsto (D\theta_i)(t)$ is a strictly positive
H\"older continuous function on $S$.  Assume that $\theta_i$ is a Lipschitz
map with Lipschitz constant $c_i \le c <1$ and let
$C$ denote the unique, compact, nonempty invariant set such that
\begin{equation*}
C = \cup_{i=1}^N \theta_i(C).
\end{equation*}
Further, assume that $\theta_i$ satisfy
\begin{equation*}
\theta_i(C) \cap \theta_j(C) = \emptyset, \text{ for } 1 \le i,j \le N.
\ i \neq j
\end{equation*}
and are one-to-one on $C$.  Then the Hausdorff dimension of $C$ is
given by the unique $\sigma_0$ such that $r(L_{\sigma_0}) = 1$.
\end{thm}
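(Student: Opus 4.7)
The plan is to establish both $\dim_H(C) \le \sigma_0$ and $\dim_H(C) \ge \sigma_0$, using the spectral data for $L_s$ already sketched in the introduction (strictly positive eigenfunction $v_s$ with eigenvalue $\lambda_s = r(L_s)$, dominant isolated spectrum, continuous dependence on $s$) together with a H\"older bounded distortion estimate. Throughout, for a finite word $\omega = (i_1,\ldots,i_n)$ I write $\theta_\omega = \theta_{i_1}\circ\cdots\circ\theta_{i_n}$ and $g_\omega(x) = (D\theta_\omega)(x)$; the chain rule for infinitesimal similitudes gives the multiplicative cocycle $g_\omega(x) = \prod_{j=1}^n (D\theta_{i_j})\bigl(\theta_{i_{j+1}}\circ\cdots\circ\theta_{i_n}(x)\bigr)$, so that
\begin{equation*}
(L_s^n w)(x) = \sum_{|\omega| = n} g_\omega(x)^s\, w(\theta_\omega(x)).
\end{equation*}
First I would check that $\sigma_0$ is well defined: $s \mapsto \lambda_s$ is continuous, strictly decreasing (the $\theta_i$ are strict contractions, so $D\theta_i(t) < 1$ on $C$), satisfies $\lambda_0 \ge N$, and tends to $0$ as $s \to \infty$.

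The technical heart is a bounded distortion estimate: there exists $K \ge 1$ such that for every finite word $\omega$ and all $x,y \in C$,
\begin{equation*}
K^{-1} \le \frac{g_\omega(x)}{g_\omega(y)} \le K.
\end{equation*}
I would prove this by summing the H\"older modulus of $\log(D\theta_i)$ along the orbit and using geometric contraction of the tails $\theta_{i_{j+1}}\circ\cdots\circ\theta_{i_n}$ to make the tail differences into a convergent geometric series. This is the only place the H\"older hypothesis is essential.

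For the upper bound, $C = \bigcup_{|\omega|=n} \theta_\omega(C)$ for every $n$, and bounded distortion together with the infinitesimal similitude definition gives $|\theta_\omega(C)| \le K\, g_\omega(x_0)\, |C|$ for a fixed $x_0 \in C$. Summing,
\begin{equation*}
\sum_{|\omega|=n} |\theta_\omega(C)|^{\sigma_0} \le K^{\sigma_0}\, |C|^{\sigma_0}\, (L_{\sigma_0}^n \mathbf{1})(x_0),
\end{equation*}
and since $\lambda_{\sigma_0} = 1$ is an isolated simple dominant eigenvalue, $L_{\sigma_0}^n \mathbf{1}$ stays uniformly bounded. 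The covers $\{\theta_\omega(C)\}_{|\omega|=n}$ have mesh tending to zero by uniform contractivity, so $H^{\sigma_0}(C) < \infty$, whence $\dim_H(C) \le \sigma_0$.

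For the lower bound I would pass to the Banach space adjoint. Weak$^*$ compactness and Schauder-Tychonoff, as noted in the introduction, yield a Borel probability measure $\mu$ supported on $C$ with $L_{\sigma_0}^* \mu = \mu$. Combined with the disjointness $\theta_i(C) \cap \theta_j(C) = \emptyset$ and bounded distortion, this forces $\mu(\theta_\omega(C))$ to be comparable to $g_\omega(x)^{\sigma_0}$, uniformly in $\omega$ and in $x \in C$. Disjointness on the compact $C$ produces a positive gap $\delta_0 > 0$ between the pieces $\theta_i(C)$; iterating yields a gap of order $c^n \delta_0$ between distinct generation-$n$ cylinders, and this controls, for any ball $B(x,r)$, the number of competing cylinders of a suitably chosen generation that can meet $B(x,r)$, yielding $\mu(B(x,r)) \le K' r^{\sigma_0}$. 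The mass distribution principle then gives $\dim_H(C) \ge \sigma_0$. I expect the main obstacle to be precisely this passage from the clean cylinder estimate to the ball estimate in the absence of a Markov partition or open-set condition: one must leverage the positive geometric gap produced by the disjointness hypothesis to bound, at every scale, how many cylinders of a given generation can overlap a given ball.
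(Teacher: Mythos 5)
This statement is quoted verbatim from Theorem 1.2 of \cite{N-P-L}; the paper supplies no proof of its own, so there is nothing internal to compare against. Your outline is, in fact, essentially the strategy of the cited proof: bounded distortion from the H\"older hypothesis, the cylinder cover $\{\theta_\omega(C)\}_{|\omega|=n}$ plus boundedness of $L_{\sigma_0}^n\mathbf{1}$ for the upper bound (note that comparison $a v_{\sigma_0}\le \mathbf{1}\le b v_{\sigma_0}$ already gives this without invoking the spectral gap), and an eigenmeasure of $L_{\sigma_0}^*$ together with the mass distribution principle for the lower bound.

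One step as written would fail, and it is not quite the one you flagged. You need a \emph{lower} distortion bound
$d(\theta_\omega(x),\theta_\omega(y)) \ge K^{-1} (D\theta_\omega)(z)\, d(x,y)$ for $x,y\in C$, which does not follow from the Lipschitz hypothesis (Lipschitz constants only bound image distances from above); it must be extracted from the infinitesimal similitude property, injectivity on $C$, compactness, and the H\"older bounded distortion estimate. This lemma is needed twice: first to get $\diam(\theta_\omega(C)) \ge K^{-1} g_\omega \diam(C)$, without which $\mu(\theta_\omega(C)) \asymp g_\omega^{\sigma_0}$ only yields $\mu(\theta_\omega(C)) \gtrsim \diam(\theta_\omega(C))^{\sigma_0}$ --- the wrong direction for Frostman; and second to propagate the gap $\delta_0$ down the generations. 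Your claimed separation ``of order $c^n\delta_0$'' is the wrong quantity for the same reason: two generation-$n$ cylinders splitting at level $j$ are separated by roughly $K^{-1} g_{\omega|_{j-1}}\delta_0 \asymp \diam(\theta_{\omega|_{j-1}}(C))\,\delta_0/|C|$, not by a power of the upper Lipschitz constant. Once the gap is expressed this way, the ball estimate is cleaner than a count of competing cylinders: for $x\in C$ and $r>0$ one takes the maximal $j$ along the nested chain of cylinders containing $x$ for which the gap around $\theta_{\omega|_j}(C)$ inside its parent still exceeds $2r$; then $B(x,r)\cap C\subset \theta_{\omega|_j}(C)$ and maximality forces $\diam(\theta_{\omega|_j}(C)) \lesssim r$, giving $\mu(B(x,r)) \lesssim r^{\sigma_0}$ directly. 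With that lemma supplied, your argument goes through.
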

For related results on the computation of Hausdorff dimension, we refer
the reader to the list of references near the bottom of page 2.

Finally, we state a result that is key to obtaining explicit upper and lower
bounds on the Hausdorff dimension. Although we give a proof to keep our
presentation self-contained, the following lemma is actually a special case of
much more general results concerning order-preserving, homogeneous cone
mappings: see Lemmas 9.1-9.4 on pages 89-91 in \cite{Y} and also Lemma 2.2 in
\cite{C} and Theorem 2.2 in \cite{B}.  If, for $\wrm$ as in
Lemma~\ref{lem:nonneg} below, we let $D$ denote the positive diagonal $N
\times N$ matrix with diagonal entries $w_j$, $1 \le j \le N$, $r(M) =
r(D^{-1}MD)$; and Lemma~\ref{lem:nonneg} can also be obtained by applying
Theorem 1.1 on page 24 of \cite{D} to $D^{-1}MD$.
\begin{lem}
\label{lem:nonneg}
Let $M$ be an $N \times N$ matrix with non-negative
entries and $\wrm$ an $N$ vector with strictly positive components.
\begin{align*}
\text{If  } (M\wrm)_k &\ge \lambda \wrm_k, \quad k =1, \ldots N, \qquad
\text{then }  r(M) \ge \lambda,
\\
\text{If  } (M\wrm)_k &\le \lambda \wrm_k, \quad k =1, \ldots N, \qquad
\text{then }   r(M) \le \lambda.
\end{align*}
\end{lem}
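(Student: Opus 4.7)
The plan is to reduce both statements to a standard similarity transformation and then apply elementary facts about operator norms and Gelfand's formula.

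First I would set $D=\operatorname{diag}(\wrm_1,\ldots,\wrm_N)$, a positive invertible diagonal matrix, and consider the similar matrix $\tilde M := D^{-1}MD$. Since $D$ and $D^{-1}$ have nonnegative entries and $M$ does too, $\tilde M$ also has nonnegative entries, and of course $r(\tilde M)=r(M)$. The hypothesis $M\wrm \le \lambda\wrm$ translates, after multiplying by $D^{-1}$, into $\tilde M \mathbf{1} \le \lambda \mathbf{1}$, where $\mathbf{1}=(1,\ldots,1)^{T}$. In other words, every row sum of $\tilde M$ is at most $\lambda$. Similarly, $M\wrm \ge \lambda\wrm$ translates into $\tilde M \mathbf{1} \ge \lambda \mathbf{1}$, i.e. every row sum of $\tilde M$ is at least $\lambda$.

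For the upper bound, the maximum row sum of a nonnegative matrix equals its $\infty$-operator norm, so $\|\tilde M\|_\infty \le \lambda$, and since the spectral radius is bounded by any operator norm, $r(M)=r(\tilde M)\le \|\tilde M\|_\infty\le \lambda$.

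For the lower bound, I would use that $M\ge 0$ preserves coordinate-wise inequalities, so from $M\wrm\ge \lambda\wrm$ an easy induction gives $M^n\wrm \ge \lambda^n \wrm$ for all $n\ge 1$. Let $\alpha=\min_k \wrm_k>0$ and $\beta=\max_k \wrm_k$. Then
\begin{equation*}
\|M^n\|_\infty \,\beta \ \ge\ \|M^n\|_\infty \,\|\wrm\|_\infty \ \ge\ \|M^n\wrm\|_\infty \ \ge\ \lambda^n \alpha,
\end{equation*}
so $\|M^n\|_\infty^{1/n}\ge \lambda(\alpha/\beta)^{1/n}$. Taking $n\to\infty$ and invoking Gelfand's formula $r(M)=\lim_{n\to\infty}\|M^n\|_\infty^{1/n}$ yields $r(M)\ge \lambda$. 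Nothing here is really an obstacle; the only thing to watch is that all three ingredients (preservation of coordinate inequalities under multiplication by a nonnegative matrix, equality of row-sum norm with $\|\cdot\|_\infty$ for nonnegative matrices, and Gelfand's formula) must each be invoked cleanly, and the diagonal-similarity reduction is what makes the row-sum characterization available.
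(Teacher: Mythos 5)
Your proof is correct. The lower-bound half is essentially the paper's own argument: iterate the inequality to get $M^n\wrm\ge\lambda^n\wrm$, bound $\|M^n\|_\infty$ from below, and apply Gelfand's formula (the paper gets $\|M^n\|_\infty\ge\lambda^n$ exactly by comparing $\wrm$ with $\|\wrm\|_\infty\erm$, while you carry a harmless factor $(\alpha/\beta)^{1/n}\to1$; both work). The upper-bound half is where you genuinely diverge. The paper argues by contradiction directly with $M$: it picks the row realizing $\|M^n\|_\infty$, derives $\min_j\wrm_j\le[\lambda/r(M)]^n\wrm_k$, and lets $n\to\infty$. You instead conjugate by $D=\operatorname{diag}(\wrm)$ so that the hypothesis becomes a row-sum bound on $\tilde M=D^{-1}MD$, and then use $r(\tilde M)\le\|\tilde M\|_\infty=\max$ row sum. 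This is exactly the alternative route the authors themselves point to in the sentence preceding the lemma (reducing to the classical row-sum bounds for nonnegative matrices applied to $D^{-1}MD$); it is shorter and avoids the limiting/contradiction step, at the cost of invoking the identification of $\|\cdot\|_\infty$ with the maximum row sum, whereas the paper's version is entirely self-contained from first principles. One cosmetic remark: since $\beta=\max_k\wrm_k=\|\wrm\|_\infty$, your first inequality in the displayed chain is actually an equality, and implicitly both arguments treat only the nontrivial case $\lambda>0$ (for $\lambda\le0$ the conclusions are immediate since $r(M)\ge0$).
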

\begin{proof}
If $(M\wrm)_k \ge \lambda \wrm_k$, $k =1, \ldots N$, it easily follows that
 $(M^n\wrm)_k \ge \lambda^n \wrm_k$ and so $\|M^n \wrm\|_{\infty} \ge \lambda^n
\|\wrm\|_{\infty}$.  Let $\erm$ be vector with all $\erm_i=1$. Then
\begin{equation*}
\|M^n\|_{\infty} = \|M^n \erm\|_{\infty} \ge \|M^n \wrm\|_{\infty}/\|\wrm\|_{\infty}
\ge \lambda^n.
\end{equation*}
Hence,
\begin{equation*}
r(M) = \lim_{n \rightarrow \infty} \|M^n\|_{\infty}^{1/n} \ge \lambda.
\end{equation*}

If $(M\wrm)_k \le \lambda \wrm_k$, $k =1, \ldots N$, it easily follows that
 $(M^n\wrm)_k \le \lambda^n \wrm_k$.  Let $k$ be chosen so that
$\|M^n\|_{\infty} = \sum_j (M^n)_{k,j}$. 
Since $[r(M)]^n = r(M^n) \le \|M^n\|_{\infty}$,
\begin{equation*}
\min_j \wrm_j [r(M)]^n \le \min_j \wrm_j \sum_j (M^n)_{k,j}
\le \sum_j (M^n)_{k,j} \wrm_j = (M^n\wrm)_k \le \lambda^n \wrm_k.
\end{equation*}
So,
\begin{equation*}
\min_j \wrm_j \le [\lambda/r(M)]^n \wrm_k.
\end{equation*}
If $r(M) > \lambda$, then letting $n \rightarrow \infty$, we get that
$\min_j \wrm_j \le 0$, which contradicts the fact that all $\wrm_j > 0$.
Hence, $r(M) \le \lambda$.
\end{proof}

\section{Examples}
\label{sec:1dexps}
\subsection{Continued fraction Cantor sets}
\label{subsec:cantor}
We first consider the problem of computing the Hausdorff dimension of some
Cantor sets arising from continued fraction expansions.  More precisely, given
any number $0<x<1$, we can consider its continued fraction expansion
\begin{equation*}
x = [a_1,a_2,a_3, \ldots] = 
\cfrac{1}{a_1 + \cfrac{1}{a_2 + \cfrac{1}{a_3 + \cdots}}},
\end{equation*}
where $a_1,a_2,a_3, \ldots \in \N$.  We then consider the Cantor set $E_{[m_1,
  \ldots, m_p]}$, of all points in $[0,1]$ where we restrict the coefficients
$a_i$ to the values $m_1, \ldots, m_p$. A number of papers (e.g.,
\cite{Bumby1}, \cite{Bumby2}, \cite{Good}, \cite{Hensley1}, \cite{Hensley2},
\cite{Jenkinson-Pollicott}) have considered this problem in the case of the
set $E_{1,2}$, consisting of all points in $[0,1]$ for which each $a_i$ has
the value $1$ or $2$.  In \cite{Jenkinson-Pollicott}, a method is presented
that computes this dimension to 25 decimal places.  Computations are also
presented in that paper and in \cite{Jenkinson} for other choices of the
values $m_1, \ldots, m_p$. In \cite{Bourgain-Kontorovich}, the Hausdorff
dimension of the Cantor set $E_{2,4,6,8,10}$ is computed to three decimal
places (0.517).

Corresponding to the choices of $m_i$, we associate contraction maps
$\theta_m(x) = 1/(x+m)$.  A key fact is that the Cantor sets we consider
can be generated as limit points of sequences of these contraction maps.  For
example, the set $E_{1.2}$ can be generated using the maps
$\theta_1(x) = 1/(x+1)$ and $\theta_2(x) = 1/(x+2)$ as the set of limit points
of sequences $\theta_{m_1} \ldots \theta_{m_n}(0)$, for $m_1, m_2, \ldots
\in \{1,2\}$.

For $w \in C[0,1]$, we define
\begin{equation*}
%%\label{Lsdef}
(L_sw)(x) = \sum_{j=1}^p \Big|\theta^{\prime}_{m_j}(x)\Big|^s w(\theta_{m_j}(x)).
\end{equation*}
In fact, we can just as easily think of $L_s$ as an operator on $C[0,
\gamma^{-1}]$ or $C[(1 + \Gamma)^{-1}, \gamma^{-1}]$, where $\gamma = \min
m_j$ and $\Gamma = \max m_j$.  In the discussion below, we will usually
work on the interval $[0, \gamma^{-1}]$.

Our computations are based on the following result, which we shall prove
in subsequent sections.
\begin{thm}
\label{thm:posev1d}
For all $s >0$, $L_s$ has a unique strictly positive eigenfunction $v_s$ with
$L_s v_s = \lambda_s v_s$, where $\lambda_s >0$ and $\lambda_s = r(L_s)$, the
spectral radius of $L_s$.  Furthermore, the map $s \mapsto \lambda_s$ is
strictly decreasing and continuous, and for all $p >0$ and for all 
$x \in [0,\gamma^{-1}]$,
\begin{multline}
\label{prop1}
(2s)(2s+1) \cdots (2s+p-1)(2 \gamma^{-1} + \Gamma)^{-p} \le
(-1)^p \frac{D^p [v_s(x)] }{v_s(x)}
\\
\le (2s)(2s+1) \cdots (2s+p-1)\gamma^{-p},
\end{multline}
where $\gamma = \min_j m_j$ and $\Gamma = \max_j m_j$. 
Finally, the Hausdorff dimension of the Cantor set generated from the maps

$\theta_{m_1}$, $\ldots$, $\theta_{m_p}$ is the unique value of $s$ with
$\lambda_s=1$.
\end{thm}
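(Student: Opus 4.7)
The theorem has three claims: (a) the spectral properties of $L_s$ (existence of $v_s$, $\lambda_s=r(L_s)$, continuity and strict monotonicity of $s\mapsto\lambda_s$); (b) the sharp derivative bounds \eqref{prop1}; and (c) the Hausdorff-dimension characterization. My plan is to deduce (a) by applying the Krein--Rutman-type Theorem~\ref{thm:1.1} cited in the Introduction, to prove (b) by writing $v_s$ as a limit of normalized power iterates of $L_s$ and reducing the required bounds to a convex-combination estimate enabled by the M\"obius structure of the maps $\theta_m$, and to deduce (c) from Theorem~1.2 of \cite{N-P-L} as quoted in Section~\ref{sec:prelim}.

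For (a), each $\theta_{m_j}(x)=1/(x+m_j)$ is a real-analytic contraction sending $\bar H:=[0,\gamma^{-1}]$ into itself (since $1/m_j\le\gamma^{-1}$), and $g_j(x)=(x+m_j)^{-2}$ is real-analytic and strictly positive, so Theorem~\ref{thm:1.1} applies and yields $\lambda_s:=r(L_s)>0$ as an algebraically simple eigenvalue with a unique (up to positive scalar) strictly positive $C^\infty$ eigenfunction $v_s$ and a strict spectral gap on $C^k(\bar H)$ for $k\ge 1$. Continuity of $s\mapsto\lambda_s$ follows from analytic perturbation theory, since $L_s$ is norm-analytic in $s$ and $\lambda_s$ is isolated and simple. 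For strict monotonicity, if $s_1<s_2$ then $(x+m_j)^{-2(s_2-s_1)}<1$ wherever $x+m_j>1$, which holds on an open subset of $\bar H$ for at least one $j$, so $L_{s_2}v_{s_1}\le\lambda_{s_1}v_{s_1}$ with strict inequality on that set; the operator analogue of Lemma~\ref{lem:nonneg} then gives $\lambda_{s_2}\le\lambda_{s_1}$, and the strict inequality is obtained either by a strong-positivity refinement or by invoking the log-convexity of $s\mapsto\lambda_s$ proved in Section~\ref{sec:logconvex}.

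Part (b) is the main technical work. Compositions of the M\"obius maps $\theta_m$ are again M\"obius: with the correspondence $\theta_m\leftrightarrow\bigl(\begin{smallmatrix}0&1\\1&m\end{smallmatrix}\bigr)$, a word $\mathbf j=(j_1,\ldots,j_n)$ yields $\theta_{\mathbf j}:=\theta_{m_{j_1}}\circ\cdots\circ\theta_{m_{j_n}}=(a_{\mathbf j}x+b_{\mathbf j})/(c_{\mathbf j}x+d_{\mathbf j})$ with $a_{\mathbf j}d_{\mathbf j}-b_{\mathbf j}c_{\mathbf j}=\pm 1$, hence $|\theta'_{\mathbf j}(x)|^s=c_{\mathbf j}^{-2s}(x+r_{\mathbf j})^{-2s}$ where $r_{\mathbf j}:=d_{\mathbf j}/c_{\mathbf j}$. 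An easy induction (extending $\mathbf j$ on the right) gives $r_{\mathbf j}=m_{j_n}+1/r_{\mathbf j^-}$ with $r_{(j)}=m_j$, so $r_{\mathbf j}=[m_{j_n};m_{j_{n-1}},\ldots,m_{j_1}]$ as a finite continued fraction and $r_{\mathbf j}\in[\gamma,\Gamma+\gamma^{-1}]$ (using $\gamma\ge 1$). Consequently
\[
L_s^n\mathbf 1(x)=\sum_{|\mathbf j|=n}c_{\mathbf j}^{-2s}(x+r_{\mathbf j})^{-2s}
\]
is a strictly positive combination of functions $q(x)=(x+d)^{-2s}$ with $d\in[\gamma,\Gamma+\gamma^{-1}]$. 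Each such $q$ satisfies
\[
(-1)^p\frac{q^{(p)}(x)}{q(x)}=(2s)(2s+1)\cdots(2s+p-1)(x+d)^{-p},
\]
and for $x\in\bar H$ one has $x+d\in[\gamma,2\gamma^{-1}+\Gamma]$, giving \eqref{prop1} for $q$. For any positive combination $f=\sum_k\alpha_k q_k$ the identity
\[
(-1)^p\frac{f^{(p)}(x)}{f(x)}=\sum_k\frac{\alpha_kq_k(x)}{f(x)}\,(-1)^p\frac{q_k^{(p)}(x)}{q_k(x)}
\]
exhibits $(-1)^pf^{(p)}/f$ as a convex combination of the ratios $(-1)^pq_k^{(p)}/q_k$, so the bounds persist for $L_s^n\mathbf 1$ uniformly in $n$ and $x$. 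By (a), $\lambda_s^{-n}L_s^n\mathbf 1$ converges in $C^p(\bar H)$ to a positive scalar multiple of $v_s$, and the bounds transfer to $v_s$ in the limit.

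For (c), the maps $\theta_{m_j}$ are $C^\infty$ infinitesimal similitudes with strictly positive derivative, contractions of $\bar H$, one-to-one, with pairwise disjoint images $\theta_{m_i}(C)\cap\theta_{m_j}(C)=\emptyset$ (a standard property of continued-fraction Cantor sets), so Theorem~1.2 of \cite{N-P-L} identifies $\dim_H(C)$ with the unique $s_*$ satisfying $\lambda_{s_*}=1$. Existence and uniqueness of $s_*$ follow from (a) together with $\lambda_0=p\ge 1$ (since $L_0\mathbf 1=p$) and $\lambda_s\to 0$ as $s\to\infty$. The main obstacle is (b); its cleanest proof combines the spectral gap from (a) (needed to pass from $L_s^n\mathbf 1$ to $v_s$ in $C^p$) with the continued-fraction identification of $r_{\mathbf j}$, which is what produces the sharp constants $(2\gamma^{-1}+\Gamma)^{-p}$ and $\gamma^{-p}$ in \eqref{prop1}.
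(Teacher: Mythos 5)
Your proposal is correct and follows essentially the same route as the paper: existence and simplicity of $\lambda_s$ via Theorem~\ref{thm:1.1}, the bounds \eqref{prop1} via the M\"obius structure of the compositions (your continued-fraction recursion for $r_{\mathbf j}$ is exactly the paper's induction on $B_n/A_n$ in Lemmas~\ref{lem:6.1}--\ref{lem:6.2}, and your convex-combination limit argument is the paper's use of \eqref{1.14} in Corollary~\ref{cor:6.4}), and the dimension statement via Theorem~1.2 of \cite{N-P-L}. The only (harmless) deviation is that you obtain strict monotonicity of $s\mapsto\lambda_s$ from log-convexity plus $\lambda_s\to 0$, whereas the paper's Theorem~\ref{thm:2.2} derives it directly from the comparison $c\,g_{\w}(x)^{s}\ge g_{\w}(x)^{t}$ under (H8.1).
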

Note that it follows easily from \eqref{prop1} when $p=1$ and $x_1, x_2 \in
[0,1]$ , that
\begin{equation}
\label{prop2}
v_s(x_2) \le v_s(x_1) \exp(2s|x_2-x_1|/\gamma).
\end{equation}
To see this, write
\begin{equation*}
\log\frac{v_s(x_2)}{v_s(x_1)} = \log v_s(x_2) - \log v_s(x_1) =
\int_{x_1}^{x_2} \frac{d}{dx} \log v_s(x) \, dx = \int_{x_1}^{x_2}
\frac{v_s^{\prime}(x)}{v_s(x)} \, dx,
\end{equation*}
apply the bound in \eqref{prop1}, and exponentiate the result.

To obtain approximations of the dimension of the Cantor sets described
in this section, we first approximate a function $f \in C^2[0,\gamma^{-1}]$ by
a continuous, piecewise linear function defined on a mesh of interval size
$h$ on $[0,\gamma^{-1}]$. More specifically, we approximate
$f(x)$, $x_k \le x \le x_{k+1}$ by its piecewise linear interpolant $f^I(x)$
given by
\begin{equation*}
f^I(x) = \frac{x_{k+1}-x}{h} f(x_k) + \frac{x-x_k}{h} f(x_{k+1}),
\quad x_k \le x \le x_{k+1},
\end{equation*}
where the mesh points $x_k$ satisfy $0 = x_0 < x_1 , \dots < x_n =\gamma^{-1}$,
with $x_{k+1} - x_k = h = 1/(\gamma n)$. 

Notice that if $\wrm = (\wrm_0, \ldots,\wrm_n)$ is a vector in $\R^{n+1}$, we
can associate a continuous piecewise linear function $w^I: [0,\gamma^{-1}] \to
\R$ defined with respect to the partition $0 = x_0 < x_1 < \ldots <x_n
=\gamma^{-1}$ of $[0,\gamma^{-1}]$ by:
\begin{equation*}
w^I(y) =
\frac{[x_{r+1} - y]}{h} (\wrm)_{r}
+ \frac{[y-x_{r}]}{h} (\wrm)_{r +1}, \qquad y \in [x_r,x_{r+1}], \quad 0 \le r
< n.
\end{equation*}
This notation will be used below and will play an important role in our
argument.

Our goal is to construct $(n+1) \times (n+1)$ matrices $A_s$ and $B_s$ which
have nonnegative entries and satisfy
\begin{equation*}
r(A_s) \le r(L_s) \le r(B_s),
\end{equation*}
where $r(A_s)$ (respectively, $r(B_s)$) denotes the spectral radius of $A_s$
(respectively, $B_s$).  Furthermore, the entries $(A_s)_{ij}$ and $(B_s)_{ij}$
of $A_s$ and $B_s$ satisfy (for $n$ large)
\begin{equation*}
0 \le (B_s)_{ij} - (A_s)_{ij} \le Ch^2,
\end{equation*}
where $C$ is a constant which can be estimated explicitly and is independent
of $n$.

%The goal is to reduce the infinite dimensional eigenvalue
%problem to a finite dimensional one.  
Standard results for the error in linear interpolation on an interval $[a,b]$
(e.g., see Theorem 3.2 of \cite{Atkinson}) assert that for $x \in [a,b]$,
there exists $\xi = \xi(x) \in (a,b)$ such that
\begin{equation*}
f^I(x) - f(x):= \frac{b-x}{b-a}f(b) 
+ \frac{x-a}{b-a} f(a) - f(x)  = \frac{1}{2}(b - x)(x-a) f^{\prime\prime}(\xi).
\end{equation*}
%where
%\begin{equation*}
%G(t,x) = \begin{cases}
%(b-x)(t-a), \quad a < t < x,
%\\
%(x-a)(b-t), \quad x < t < b.
%\end{cases}
%\end{equation*}
In the notation above, if $x \in [0,\gamma^{-1}]$ and $x_r \le x \le x_{r+1}$ for
some $r$, $0 \le r <n$, we shall apply this error estimate with $a= x_r$
and $b = x_{r+1}$, so $\xi \in (x_r,x_{r+1})$.

%If $x_{r_j} \le \theta_{m_j}(x) \le x_{r_j+1}$, we get
%\begin{equation*}
%v_s^I(\theta_{m_j}(x)) =
%\frac{[x_{r_j+1} - \theta_{m_j}(x)]}{h} v_s(x_{r_j})
%+ \frac{[\theta_{m_j}(x)-x_{r_j}]}{h} v_s(x_{r_j +1}).
%\end{equation*}
We can also use results from Theorem~\ref{thm:posev1d} to bound the
interpolation error.
Letting $f(x) = v_s(x)$, we obtain from Theorem~\ref{thm:posev1d} that
\begin{equation*}
2s (2s+1)(2 \gamma^{-1} + \Gamma)^{-2} v_s(\xi)
 \le v_s^{\prime\prime}(\xi) \le 2s (2s+1)\gamma^{-2} v_s(\xi).
\end{equation*}
Using \eqref{prop2}, and the fact that $|\xi- x_r| \le h$ for
$\xi \in [x_r,x_{r+1}]$, we have
\begin{multline*}
v_s(x_{r}) \exp(- 2s h/\gamma) \le v_s(x_{r}) \exp(-2s|\xi -
x_{r}|/\gamma) \le v_s(\xi)
\\
\le v_s(x_{r}) \exp(2s|\xi - x_{r}|/\gamma) \le v_s(x_{r}) \exp(2s h/\gamma).
\end{multline*}
Similarly,
\begin{equation*}
v_s(x_{r+1}) \exp(- 2s h/\gamma) \le v_s(\xi)
\le v_s(x_{r+1}) \exp(2s h/\gamma).
\end{equation*}
Taking a suitable convex combination of these results, we get
for $y \in [x_r, x_{r+1}]$,
\begin{equation*}
v_s^I(y) \exp(- 2s h/\gamma) \le v_s(\xi)
\le v_s^I(y) \exp(2s h/\gamma).
\end{equation*}
Using the interpolation error estimate, we then get
for $x_{r} \le y \le x_{r+1}$,
\begin{multline*}
[x_{r+1} - y][y -x_r]
s (2s+1) (2\gamma^{-1} + \Gamma)^{-2} \exp(-2sh/\gamma) \, v_s^I(y)
\le  v_s^I(y) - v_s(y) 
\\
\le 
[x_{r+1} - y][y -x_r]
s (2s+1) \gamma^{-2} \exp(2sh/\gamma) \, v_s^I(y).
\end{multline*}

%If we use the second form of the error
%and a more careful analysis, we can improve this estimate slightly to:
%\begin{equation*}
%|f(x) - f^I(x)|\le s (2s+1) (b - x)(x-a) \exp(s[b-a]) \frac{f(a) + f(b)}{2}.
%\end{equation*}
Using this estimate, we have precise upper and lower bounds on the error
in the interval $[x_r,x_{r+1}]$ that only depend on the function values of
$v_s$ at $x_{r}$ and $x_{r+1}$.  For $y \in [x_r, x_{r+1}]$, define
error functionals
\begin{align*}
\err^1(y) &= [x_{r+1} - y][y -x_{r}]
s (2s+1) \gamma^{-2} \exp(2sh/\gamma),
\\
\err^2(y) &= [x_{r+1} - y][y -x_{r}]
s (2s+1) (2\gamma^{-1} + \Gamma)^{-2} \exp(-2sh/\gamma).
\end{align*}
Note that $\err^1(y)$ and $\err^2(y)$ depend on the subinterval in which
$y$ lies, although this is not reflected directly in the notation.

It then follows that for all $y \in [x_r, x_{r+1}]$,
\begin{equation*}
[1 - \err^1(y)] v_s^I(y) \le v_s(y) 
\le [1 - \err^2(y)] v_s^I(y).
\end{equation*}
For a fixed $k$, $0 \le k \le n$, if we replace $y$ in the above inequality
by $\theta_{m_j}(x_k)$ and sum over $j$, we obtain
\begin{multline*}
\sum_{j=1}^p \Big|\theta^{\prime}_{m_j}(x_k)\Big|^s 
[1 - \err^1(\theta_{m_j}(x_k))] v_s^I(\theta_{m_j}(x_k))
\le \sum_{j=1}^p \Big|\theta^{\prime}_{m_j}(x_k)\Big|^s v_s(\theta_{m_j}(x_k))
\\
= (L_s v_s)(x_k) =  r(L_s) v_s(x_k)
\le \sum_{j=1}^p \Big|\theta^{\prime}_{m_j}(x_k)\Big|^s 
[1 - \err^2(\theta_{m_j}(x_k))] v_s^I(\theta_{m_j}(x_k)).
\end{multline*}

Motivated by the above inequality, we now define $(n+1) \times (n+1)$ matrices
$A_s$ and $B_s$ which have nonnegative entries and satisfy the property that
$r(A_s) \le r(L_s) \le r(B_s)$. Letting $\wrm$ be a vector in $\R^{n+1}$, we
define $(B_s \wrm)_k$ and $(A_s \wrm)_k$, the $k$th component of $B_s \wrm$
and $A_s \wrm$ respectively,  by
\begin{align*}
(B_s \wrm)_k &= \sum_{j=1}^p \Big|\theta^{\prime}_{m_j}(x_k)\Big|^s 
[1 - \err^2(\theta_{m_j}(x_k))] w^I(\theta_{m_j}(x_k)),
\\
(A_s \wrm)_k &= \sum_{j=1}^p \Big|\theta^{\prime}_{m_j}(x_k)\Big|^s 
[1 - \err^1(\theta_{m_j}(x_k))] w^I(\theta_{m_j}(x_k)).
\end{align*}
Because of the fact that in all of our previous definitions, we take $0 \le k
\le n$, we shall also do so in our definitions of $A_s$ and $B_s$, so that
these matrices have row and columns, numbered $0$ through $n$.  In the above
definitions, if $\theta_{m_j}(x_k) \in [x_{r_j}, x_{r_j +1}]$, (the
subinterval also depends on $k$, but we have omitted this dependence in the
notation, thinking of $k$ as fixed), then applying the previous definition of
$w^I(y)$,
\begin{equation*}
w^I(\theta_{m_j}(x_k)) = \frac{x_{r_j +1} - \theta_{m_j}(x_k)}{h} \wrm_{r_j}
+ \frac{\theta_{m_j}(x_k) - x_{r_j}}{h} \wrm_{r_j+1}.
\end{equation*}

To understand these formulas, note that $w^I(\theta_{m_j}(x_k))$ is just a
linear combination of two components of the vector $\wrm$, namely $\wrm_{r_j}$
and $\wrm_{r_j+1}$, where $x_{r_j}$ and $x_{r_j+1}$ are the endpoints of the
subinterval to which $\theta_{m_j}(x_k)$ belongs. Determining this subinterval
for $1 \le j \le p$ and $0 \le k \le n$ are the first calculations we need to
make.  In the case $p=1$, there is only one term in the sum (when $j=1$), and
since $(B_s \wrm)_k = \sum_{i=0}^n (B_s)_{k,i} \wrm_i$, we then have
\begin{align*}
(B_s)_{k,r_j} &= \Big|\theta^{\prime}_{m_j}(x_k)\Big|^s 
[1 - \err^2(\theta_{m_j}(x_k)] \frac{[x_{r_j+1} - \theta_{m_j}(x_k)]}{h},
\\
(B_s)_{k,r_j+1} &= \Big|\theta^{\prime}_{m_j}(x_k)\Big|^s 
[1 - \err^2(\theta_{m_j}(x_k)]\frac{[\theta_{m_j}(x_k)-x_{r_j}]}{h},
\\
(B_s)_{k,i} &= 0, \quad i \neq r_j, r_{j}+1.
\end{align*}
If $p >1$, then for each $j =2, \ldots, p$, we modify the entries in the $k$th
row of the matrix $B_s$, according to which subinterval the points
$\theta_{m_j}(x_k)$ lie. If the subinterval is disjoint from the previous
subintervals, then we need to modify the corresponding two columns of the
$k$th row of the matrix $B_s$, which introduces two new nonzero entries.  If
it coincides with a previous subinterval, then we simply add to the
coefficients in the two corresponding columns.  We perform this procedure
for each $x_k, k =0, \ldots, n$, thus generating the $n+1$ rows of the matrix
$B_s$.  The entries of the matrix $A_s$ are generated in a similar fashion.

An example, where we simplify the presentation by working on the interval
$[0,1]$ instead of $[0, \gamma^{-1}]$, is when $h=1/4$, so that we have $x_0
=0$, $x_1 =1/4$, $x_2 =1/2$, $x_3 = 3/4$, and $x_4 =1$. We only show the
computations for $B_s$, which is a $5 \times 5$ matrix, since the computations
for $A_s$ are similar.  If we consider $p=2$, $\theta_{m_1}(x) = 1/(x+3)$ and
$\theta_{m_2}(x) = 1/(x+5)$, then
\begin{gather*}
\theta_{m_1}(x_0) = \frac{1}{3},  \quad
\theta_{m_1}(x_1) = \frac{4}{13},\quad
\theta_{m_1}(x_2) = \frac{2}{7}, \quad
\theta_{m_1}(x_3) = \frac{4}{15}, \quad
\theta_{m_1}(x_4) = \frac{1}{4},
\\
\theta_{m_2}(x_0) = \frac{1}{5}, \quad
\theta_{m_2}(x_1) = \frac{4}{21}, \quad
\theta_{m_2}(x_2) = \frac{2}{11},\quad
\theta_{m_2}(x_3) = \frac{4}{23},\quad
\theta_{m_2}(x_4) = \frac{1}{6}.
\end{gather*}
Note that in this case, $\theta_{m_1}(x_k) \in [1/4,1/2]$ and
$\theta_{m_2}(x_k) \in [0,1/4]$, for $k=0, \ldots, 4$.  Although
$\theta_{m_1}(x_4)$ is also in $[1/4,1/2]$, there is no ambiguity, since the
only nonzero coefficient multiplies $\wrm_1$ and the coefficient is the same
with either choice of subinterval.

We next compute $w^I(\theta_{m_j}(x_k))$ and $\err^2(\theta_{m_j}(x_k))$.
\begin{align*}
w^I(\theta_{m_1}(x_k)) &= \frac{x_{2} - \theta_{m_1}(x_k)}{h} \wrm_{1}
+ \frac{\theta_{m_1}(x_k) - x_{1}}{h} \wrm_{2},
\\
w^I(\theta_{m_2}(x_k)) &= \frac{x_{1} - \theta_{m_2}(x_k)}{h} \wrm_{0}
+ \frac{\theta_{m_2}(x_k) - x_{0}}{h} \wrm_{1},
\\
\err^2(\theta_{m_1}(x_k)) &= [x_{2} - \theta_{m_1}(x_k)][\theta_{m_1}(x_k) - x_{1}]
s (2s+1) (2\gamma^{-1} + \Gamma)^{-2} \exp(-2sh/\gamma),
\\
\err^2(\theta_{m_2}(x_k)) &= [x_{1} - \theta_{m_2}(x_k)][\theta_{m_2}(x_k) - x_{0}]
s (2s+1) (2\gamma^{-1} + \Gamma)^{-2} \exp(-2sh/\gamma).
\end{align*}

Combining these results, we find that for $k =0, \ldots, 4$,
\begin{align*}
(B_s)_{k,0} &= \Big|\theta^{\prime}_{m_2}(x_k)\Big|^s 
[1 - \err^2(\theta_{m_2}(x_k)] [x_1 - \theta_{m_2}(x_k)]/h,
\\
(B_s)_{k,1} &= \Big|\theta^{\prime}_{m_1}(x_k)\Big|^s 
[1 - \err^2(\theta_{m_1}(x_k)] [x_2 - \theta_{m_1}(x_k)]/h
\\
&\qquad + \Big|\theta^{\prime}_{m_2}(x_k)\Big|^s 
[1 - \err^2(\theta_{m_2}(x_k)] [\theta_{m_2}(x_k) -x_0]/h,
\\
(B_s)_{k,2} &= \Big|\theta^{\prime}_{m_1}(x_k)\Big|^s 
[1 - \err^2(\theta_{m_1}(x_k)][\theta_{m_1}(x_k) -x_1]/h,
\\
(B_s)_{k,3} &= (B_s)_{k,4} =0.
\end{align*}

%\begin{equation*}
%\  \sum_{l=0}^N A_{k,l}^L v_s(x_l) \le \lambda_s v_s(x_k) 
%\\le \sum_{l=0}^N A_{k,l}^U v_s(x_l)
%\\end{equation*}
%\for appropriate matrices $A^L$ and $A^U$.
Returning to the general case, note that since $\err^i(y) = O(h^2)$ for
$i=1,2$, all of the entries of $A_s$ and $B_s$ will be nonnegative, provided
$h$ is sufficiently small.  However, the example given above is typical and
shows that, in general, the entries of $A_s$ and $B_s$ will not all be
strictly positive. If we define a vector $\wrm$ by $\wrm_k = v_s(x_k)$, then
$w^I(y) = v_s^I(y)$ for all $y \in [0,1]$, and our previous inequalities show
that for $0 \le k \le n$,
\begin{equation*}
(A_s \wrm)_k \le r(L_s) v_s(x_k) = r(L_s) \wrm_k, \qquad
(B_s \wrm)_k \ge r(L_s) v_s(x_k) = r(L_s) \wrm_k.
\end{equation*}
Since $\wrm_k = v_s(x_k) >0$ for $k=0, \ldots, n$, we can apply
Lemma~\ref{lem:nonneg} in Section~\ref{sec:prelim} about nonnegative matrices
to see that \begin{equation*} r(A_s) \le r(L_s) \le r(B_s).
\end{equation*}

As described in Section~\ref{sec:intro}, if $s_*$ denotes the unique value of
$s$ such that $r(L_{s_*}) = \lambda_{s_*} = 1$, then $s_*$ is the Hausdorff
dimension of the set $E_{[m_1, \ldots, m_p]}$.  If we can find a number $s_1$
such that $r(B_{s_1}) \le 1$, then $r(L_{s_1}) \le r(B_{s_1}) \le 1$, and we can
conclude that $s_* \le s_1$.  Analogously, if we can find a number $s_2$ such
that $r(A_{s_2}) \ge 1$, then $r(L_{s_2}) \ge r(A_{s_2}) \ge 1$, and we can
conclude that $s_* \ge s_2$.  By choosing the mesh sufficiently fine, we can
make $s_1-s_2$ small, providing a good estimate for $s_*$.

We can also reduce the number of computations by first iterating the maps
$\theta_{m_i}$ to produce a smaller initial domain that we need to
approximate.  For example, if we seek the Hausdorff dimension of the set
$E_{1,2}$, since $\theta_1([0,1]) = [1/2,1]$ and $\theta_2([0,1]) =
[1/3,1/2]$, the maps $\theta_1$ and $\theta_2$ map $[1/3,1] \mapsto [1/3,1]$,
so we can restrict the problem to this subinterval.  Further iterating, we see
that $\theta_1([1/3,1]) = [1/2,3/4]$ and $\theta_2([1/3,1]) = [1/3,3/7]$.
Hence the maps $\theta_1$ and $\theta_2$ map $[1/3,3/7] \cup [1/2,3/4]$ to
itself and we can further restrict the problem to this domain.

\subsection{Continued fraction Cantor sets -- numerical results}
\label{subsec:cantor-num}
In this section, we report in Table~\ref{tb:t1} the results of the
application of the algorithm described above to the computation of the
Hausdorff dimension of a sample of continued fraction Cantor sets.
Where the true value was known to sufficient accuracy, it is not hard
to check that the rate of convergence as $h$ is refined is $O(h^2)$,
which corresponds to the theoretical result described in
Remark~\ref{rem:9.9}. The upper and lower errors are computed based on
the results reported in \cite{Jenkinson-Pollicott}. For the last five
entries, we do not have independent results for the true solution
correct to a sufficient number of decimal places to compute the upper
and lower errors, but our results give an interval which must contain
the true solution.

Although the theory developed above does not
apply to higher order piecewise polynomial approximation, since one cannot
guarantee that the approximate matrices have nonnegative entries, we also
report in Table~\ref{tb:t2} and Table~\ref{tb:t22} the results of higher order
piecewise polynomial approximation to demonstrate the promise of this
approach.  In this case, we only provide the results for $B_s$, which does not
contain any corrections for the interpolation error.  In a future paper we
hope to prove that rigorous upper and lower bounds for the Hausdorff dimension
can also be obtained when higher order piecewise polynomial approximations are
used.

\begin{table}[ht]
\footnotesize
\caption{Computation of Hausdorff dimension $s$ of some continued fraction 
Cantor sets.}
\label{tb:t1}
\begin{center}
%%%\begin{tabular}{|c|c|c|}
\begin{tabular}{|c|c|c|c|c|c|}
\hline
Set    &   h  &  lower $s$ & upper $s$ & low err & up err  \\
\hline \hline 
E[1,2] &  .0001 & 0.53128050509989  &  0.53128050644980
& 1.18e-09  & 1.73e-10  \\
       &   .00005 & 0.53128050598142 & 0.53128050632077
& 2.96e-10 &  4.36e-11  \\
\hline\hline  
E[1,3] & .0001 & 0.45448907685942 & 0.45448907780427 & 8.02e-10 & 1.42e-10
\\
&   .00005 & 0.45448907745903 &  0.45448907769761 & 2.03e-10 & 3.58e-11 
 \\
\hline \hline 
E[1,4]  & .0001 & 0.41118272409575 &  0.41118272491153 & 6.79e-10 & 1.37e-10 
 \\
&   .00005 & 0.41118272460331 &  0.41118272480924 & 1.71e-10  & 3.44e-11 
\\
\hline\hline 
E[2,3] & .0001 & 0.33743678074485 & 0.33743678082457 & 6.12e-11 & 1.85e-11
\\
&   .00005 & 0.33743678079023 & 0.33743678081090 & 1.58e-11& 4.84e-12
\\
\hline\hline 
E[2,4] & .0001 & 0.30631276799370 & 0.30631276807670 & 5.91e-11 & 2.39e-11
\\
&   .00005 & 0.30631276803924 & 0.30631276805816  & 1.35e-11 & 5.37e-12 
%\\
%\hline \hline 
%E[3,4] & .0001 & 0.263737482885901 & 0.263737482913807 & 1.15e-11 & 1.64e-11
%\\
%&   .00005 & 0.263737482894486 & 0.263737482901574 & 2.94e-12& 4.15e-12
\\
\hline \hline 
E[10,11] & .0002 & 0.14692123539045 & 0.14692123539103 & 3.38e-13& 2.43e-13
\\
&   .00005 & 0.14692123539076 & 0.14692123539080 & 1.92e-14& 1.40e-14 
\\
\hline \hline 
E[100,10000] & .0004 & 0.05224659263866 & 0.05224659263866 & 2.21e-15 & 
3.50e-15 \\
&   .0001 & 0.05224659263866 & 0.05224659263866 & 1.73e-16 & 2.71e-16 
\\
\hline \hline 
E[2,4,6,8,10] & .0001 & 0.51735703083073 & 0.51735703098246 & & \\
&   .00005 & 0.51735703091123 & 0.51735703094801 & &  \\
\hline \hline 
E[1,\ldots,10] &  .0001 & 0.92573758921886 & 0.92573759153175 & &  \\
&   .00005 & 0.92573759066470 & 0.92573759124295 & &  \\
\hline\hline 
E[1,3, 5, \ldots, 33]& .0001 & 0.77051600758209 & 0.77051600898599 & & \\
&   .00005 & 0.77051600843322 & 0.77051600878460 & & \\
\hline\hline
E[2, 4, 6, \ldots, 34] & .0001 & 0.63347197012177 & 0.63347197028753 & & \\
&   .00005 & 0.63347197021161 & 0.63347197025258 & & \\
\hline\hline
E[1, \ldots,34] & .0001 & 0.98041962337899 & 0.98041962562238 & & \\
&   .00005 & 0.98041962476506 & 0.98041962532582 & &  \\
\hline 
\end{tabular}
\end{center}
\end{table}

\begin{table}[!ht]
\footnotesize
\caption{Computation of Hausdorff dimension $s$ of E[1,2]
using higher order piecewise polynomials.}
\label{tb:t2}
\begin{center}
\begin{tabular}{|c|c|c|c|c|c|c|}
\hline
degree & h  &  $s$ & error \\
\hline \hline 
1 & .01 & 0.531282991861209 & 2.49 e-06 \\
2 & .02 & 0.531280509905738 & 3.63 e-09  \\
4 & .04 & 0.531280506277707 & 5.07 e-13  \\
5 & .05 & 0.531280506277198 & 2.44 e-15 \\
\hline 
\end{tabular}
\end{center}
\end{table}
In the computations shown using higher order piecewise polynomials, since the
number of unknowns for a continuous, piecewise polynomial of degree $k$ on $n$
uniformly spaced subintervals of width $h$ is given by $k n + 1$, to get a
fair comparison, we have adjusted the mesh sizes so that each computation
involves the same number of unknowns. For this problem, the eigenfunction
$v_s$ is smooth and the computations show a dramatic increase in the accuracy
of the approximation as the degree of the approximating piecewise polynomial
is increased.

\begin{table}[!ht]
\footnotesize
\caption{Computation of Hausdorff dimension $s$ of E[2,4,6,8,10]
using piecewise cubic polynomials.}
\label{tb:t22}
\begin{center}
\begin{tabular}{|c|c|c|c|c|c|c|}
\hline
h  &  $s$ \\
\hline \hline 
0.1    &    0.517357031893604       \\
.05    &    0.517357031040157       \\  
.02    &    0.517357030941730       \\
.01    &    0.517357030937109       \\
.005   &    0.517357030937029       \\
.002   &    0.517357030937019       \\
.001   &    0.517357030937018       \\
\hline 
\end{tabular}
\end{center}
\end{table}

%\begin{table}[!ht]
%%\footnotesize
%\caption{Computation of Hausdorff dimension $s$ of some continued fraction 
%Cantor sets with a mesh size of $h = .0001$.}
%\label{tb:t1}
%\begin{center}
%%%%\begin{tabular}{|c|c|c|}
%\begin{tabular}{|c|c|c|c|c|c|}
%\hline
%%%%  Set    &   Poly. Deg.  &  Hausdorff dimension $s$ \\
%Set    &   Degree  &  $s$ & $r(A_s)$ & $r(B_s)$ \\
%\hline 
%%%%E[1,2] &  1 &  $0.53128050 \le s \le 0.53128051$ \\
%%%%E[1,3] &  1 &  $0.45448907 \le s \le 0.45448908$ \\
%%%%E[1,4] &  1 &  $0.41118272 \le s \le 0.41118273$ \\
%%%%E[1,\ldots,10] &  1 & \\
%%%%E[1,2] &  2 &  $0.454489077661 \le s \le 0.454489077662$ \\
%E[1,2] &  1 &  $0.53128050$ & $1.000000006449334$ & $1.000000008270193$ \\
%       &    &  $0.53128051$ & $0.999999993803366$ & $0.999999995624223$ \\
%E[1,3] &  1 &  $0.45448907$ & $1.000000009762741$ & $1.000000011163514$ \\
%       &    &  $0.45448908$ & $0.999999995530168$ & $0.999999996930940$ \\
%E[1,4] &  1 &  $0.41118272$ & $1.000000006249814$ & $1.000000007530190$ \\
%       &    &  $0.41118273$  & $0.999999990990539$ & $0.999999992270919$ \\
%E[1,\ldots,10] &  1 & $0.925737585$ & $1.000000007966662$ &
%$1.000000012364732$ \\
%       &    &   $0.925737595$ &  $0.999999989083203$ & $0.999999993481279$  \\
%E[1,3] &  2 &  $0.454489077661$ &  & $1.000000000001226$ \\
%       &    &  $0.454489077662$ &   & $0.999999999999802$ \\
%\hline
%\end{tabular}
%\end{center}
%\end{table}

\subsection{An example with less regularity}
\label{subsec:lowreg}

For $0 \le \alam \le 1$, we consider the maps
\begin{equation}
\label{lrmaps}
\theta_1(x) = \frac{1}{3 + 2 \alam}(x + \alam x^{7/2}), \qquad
\theta_2(x) = \frac{1}{3 + 2 \alam}(x + \alam x^{7/2}) +
\frac{2 + \alam}{3 + 2 \alam},
\end{equation}
which map the unit interval to itself. Both these maps $\in C^3([0,1]$, but
$\notin C^4([0,1]$. We note that because of the lack of regularity, the
methods of \cite{Jenkinson-Pollicott} and \cite{Jenkinson} cannot be applied.
When $\alam=0$, these maps become
\begin{equation*}
\theta_1(x) = \frac{x}{3}, \qquad
\theta_2(x) = \frac{x}{3} + \frac{2}{3},
\end{equation*}
and the corresponding Cantor set has Hausdorff dimension $\ln 2/\ln 3
\hfill\break \approx 0.630929753571458$.  

Our computations, shown in Table~\ref{tb:t3}, are based on the following
result, which we shall prove in subsequent sections.
\begin{thm}
\label{thm:posev1dreg}
Let
\begin{equation*}
(L_s w)(x) = \sum_{j=1}^2 |\theta_j^{\prime}(x)|^s w(\theta_j(x)),
\end{equation*}
where $\theta_1$ and $\theta_2$ are given by \eqref{lrmaps}, and we have not
indicated the dependence on $\alam$ in our notation.
For all $s >0$, $L_s$ has a unique (up to normalization) strictly positive
$C^2$ eigenfunction $v_s$ with $L_s v_s = r_s v_s$, where $r_s >0$
and $r_s = r(L_s)$, the spectral radius of $L_s$.  Furthermore, the map
$s \mapsto r_s$ is strictly decreasing and continuous, and for all $x_1,
x_2 \in [0,1]$, we have the estimate
\begin{equation*}
0 < \frac{v_s^{\prime\prime}(x)}{v_s(x)} 
\le  \Big[s G_2(\alam) +\frac{ 2 s^2 C_1(\alam)^2 \kappa(\alam)}
{1 - \kappa(\alam)} + \frac{ s C_1(\alam) E_2(\alam)}{1 - \kappa(\alam)}\Big]
\big[1 - \kappa(\alam)^2\big]^{-1},
\end{equation*}
where $\kappa(\alam)$, $C_1(\alam)$, $E_2(\alam)$, $C_2(\alam)$, and
$G_2(\alam)$ are given by \eqref{3.44}, \eqref{3.45}, \eqref{3.47}, 
\eqref{3.48}, and \eqref{3.50}, respectively, 
%\begin{multline*}
%0 < \frac{D^2 v_s(x)}{v_s(x)} \le s^2 [C_1(\alam)]^2
%\Big(\frac{6+4 \alam}{4 - 3 \alam}\Big)
%\\
%+ s \frac{(6+4 \alam)^2}{(4 - 3 \alam)(8 + 11 \alam)}
%\left[C_2(\alam) + C_1(\alam) M_0(\alam) 
%\frac{(6+4 \alam)}{(4 - 3 \alam)}\right],
%\end{multline*}
%where $C_1:= C_1(a)$, $C_2:= C_2(a)$, and $M_0:=M_0(a)$ are defined by
%%\eqref{3.6}, \eqref{3.23}, and \eqref{3.14}, respectively,
and $\alam$ is as in \eqref{lrmaps}.  Finally, the Hausdorff dimension of the
Cantor set generated from the maps $\theta_1$ and $\theta_2$ is the unique
value of $s$ with $r_s= r(L_s) =1$.
\end{thm}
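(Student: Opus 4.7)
The theorem packages four distinct assertions: (i) existence and uniqueness (up to normalization) of a strictly positive $C^2$ eigenfunction $v_s$ of $L_s$; (ii) that the corresponding eigenvalue is the spectral radius $r_s=r(L_s)$, and that $s\mapsto r_s$ is continuous and strictly decreasing; (iii) the explicit upper bound on $v_s''/v_s$; and (iv) identification of $\dim_H$ of the invariant Cantor set with the unique root of $r_s=1$. The plan is to treat (i), (ii), and (iv) as direct consequences of the general machinery already developed in the paper, and to obtain (iii) by specializing the derivative-bound framework of Section~\ref{sec:1d-deriv} to the explicit maps in \eqref{lrmaps}.

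First I would verify the hypotheses needed for those general theorems. Direct computation gives
$\theta_j'(x) = (1+(7/2)\alam x^{5/2})/(3+2\alam)>0$,
$\theta_j''(x) = (35/4)\alam x^{3/2}/(3+2\alam)\ge 0$, and
$\theta_j'''(x) = (105/8)\alam x^{1/2}/(3+2\alam)\ge 0$,
all continuous on $[0,1]$, confirming $\theta_j\in C^3([0,1])\setminus C^4([0,1])$. The Lipschitz constant $\Lip(\theta_j)=(1+(7/2)\alam)/(3+2\alam)$ is strictly less than $1$ for $\alam\in[0,1]$, so each $\theta_j$ is a strict contraction and an infinitesimal similitude with strictly positive H\"older-continuous derivative. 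The images $\theta_1([0,1])=[0,1/(3+2\alam)]$ and $\theta_2([0,1])=[(2+\alam)/(3+2\alam),1]$ are disjoint, so the separation condition holds on $C$ and each $\theta_j$ is injective on $C$. With these hypotheses in hand, Theorem~\ref{thm:1.1} supplies an algebraically simple positive eigenvalue $r_s=r(L_s)$ (with the essential spectral radius strictly smaller), a unique strictly positive $C^2$ eigenfunction $v_s$, and continuity of $s\mapsto r_s$. Strict monotonicity follows immediately from the uniform strict contraction $|\theta_j'(x)|<1$, since then $s\mapsto [\theta_j'(x)]^s$ is pointwise strictly decreasing. Assertion (iv) is then exactly the Hausdorff-dimension theorem of \cite{N-P-L} stated in Section~\ref{sec:prelim}, whose hypotheses have just been checked.

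The substantive remaining work is the bound in (iii), which I would derive by specializing the derivative iteration of Section~\ref{sec:1d-deriv}. Differentiating the eigenvalue identity $r_sv_s(x)=\sum_j [\theta_j'(x)]^s v_s(\theta_j(x))$ once and dividing by $v_s(x)$ expresses $v_s'(x)/v_s(x)$ as a convex combination (with weights $\pi_j(x)=[\theta_j'(x)]^s v_s(\theta_j(x))/(r_s v_s(x))$, summing to $1$) of the same ratio evaluated at $\theta_j(x)$, with factor $\theta_j'(x)$ and a driving term in $s\,\theta_j''/\theta_j'$. Iteration is therefore a fixed-point problem whose kernel contracts at rate $\kappa(\alam)=\max_{j,x}|\theta_j'(x)|$ (the quantity of \eqref{3.44}), yielding a uniform bound $|v_s'|/v_s\le s\,C_1(\alam)/(1-\kappa(\alam))$ with $C_1(\alam)$ as in \eqref{3.45}. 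A second differentiation produces a structurally identical fixed-point equation for $v_s''/v_s$ whose driving data mix $s$, $s^2$, the already-controlled $v_s'/v_s$, and the bounds $E_2(\alam), G_2(\alam)$ on $|\theta_j''|$ and $|\theta_j'''|$ from \eqref{3.47} and \eqref{3.50}. Because the chain rule for $(v_s\circ\theta_j)''$ contributes two factors of $\theta_j'$ in the $v_s''$-term, the relevant contraction rate improves to $\kappa(\alam)^2$, and summing the resulting geometric series reproduces the three-term upper bound stated, divided by $1-\kappa(\alam)^2$. Positivity $v_s''/v_s>0$ follows because every driving term in the iteration is nonnegative (since $\theta_j''\ge 0$, $v_s>0$, and the first-derivative contribution has a favorable sign on the invariant intervals), and strict positivity is preserved by the fixed-point limit.

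The main obstacle is the careful bookkeeping for this derivative iteration: one must track the constants $\kappa(\alam), C_1(\alam), E_2(\alam), C_2(\alam), G_2(\alam)$, verify that the second-order iteration kernel truly contracts at rate $\kappa(\alam)^2$ in a norm adapted to ratios with $v_s$, and ensure all sign conventions line up to give positivity. The reduced regularity (only $C^3$, since $x^{7/2}\notin C^4$) forces us to stay strictly inside the generic framework of Section~\ref{sec:1d-deriv}; the sharper M\"obius-style bounds of Section~\ref{sec:mobius} are unavailable. Once the general estimates are in place, the remaining work is the elementary algebra of substituting the explicit derivatives of \eqref{lrmaps} into \eqref{3.44}--\eqref{3.50}.
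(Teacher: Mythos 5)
Your treatment of existence, uniqueness, simplicity of the eigenvalue, monotonicity and continuity of $s\mapsto r_s$, and the identification of the Hausdorff dimension follows the same route as the paper (Theorem~\ref{thm:1.1}, Theorem~\ref{thm:2.2}, and the theorem of \cite{N-P-L} quoted in Section~\ref{sec:prelim}), and your derivation of the \emph{upper} bound on $v_s''/v_s$ is exactly the refinement \eqref{3.39}--\eqref{3.40} of Theorem~\ref{thm:3.2} that the paper invokes. However, your argument for the strict \emph{lower} bound $v_s''/v_s>0$ has a genuine gap. You assert that positivity holds "because every driving term in the iteration is nonnegative (since $\theta_j''\ge 0$, $v_s>0$, and the first-derivative contribution has a favorable sign)." The problematic term is not the first-derivative contribution but $D^2[g_\w(x)^s]/g_\w(x)^s$, which equals $s\,g_\w^{-2}\bigl(g_\w''g_\w-(1-s)[g_\w']^2\bigr)$; for $s<1$ the subtracted term $(1-s)[g_\w']^2$ is positive, so nonnegativity of this quantity is not automatic and is precisely the hypothesis \eqref{3.29} of Theorem~\ref{thm:3.4}. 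The paper verifies it by an explicit computation: with $g=\theta_1'$ one finds that $g''(x)g(x)-(1-s)[g'(x)]^2>0$ for $0<x\le1$ holds if and only if $s>\tfrac25[1-3/(7\alam)]$ (condition \eqref{3.43}), which fails for small $s$ when $\alam>3/7$. One then closes the argument by observing that $r(L_s)\ge 2(1/5)^s>1$ for $s<\log 2/\log 5\approx 0.4307>8/35$, so only $s$ in the range where \eqref{3.43} holds is relevant. Without checking \eqref{3.29} (and without the restriction on $s$ that makes it hold), the claimed strict positivity of $v_s''$ is unproved.

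A smaller point: you describe $G_2(\alam)$ as a bound on $|\theta_j'''|$. It is not; by \eqref{3.50} it is the maximum of $\bigl(g''g-(1-s)[g']^2\bigr)/g^2$, i.e., exactly the quantity controlling the sign issue above, and $\theta_j'''$ does not enter the second-derivative estimate at all (it would only appear in the bound for $D^3v_s$). This mislabeling is consistent with the gap: the condition that $G_2$ encodes is the one your positivity argument skips.
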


\begin{table}[!ht]
%\footnotesize
\caption{Computation of Hausdorff dimension $s$ of less regular examples.} 
\label{tb:t3}
\begin{center}
\begin{tabular}{|c|c|c|c|c|}
\hline
$\alam$    &   $h$  &  lower $s$ & upper $s$ & upper $s$ - lower $s$ \\
\hline 
$0.0$  &  .0001 &  $0.630929753571456$ & $0.630929753571458$ & $2.00e-15$ \\
$0.25$ &  .0001 &  $0.691029100877742$ & $0.691029110502742$ & $9.63e-09$ \\
$0.5$  &  .0001 &  $0.733474573000780$ & $0.733474622222678$ & $4.92e-08$ \\
$0.75$ &  .0001 &  $0.767207065889322$ & $0.767207292955631$ & $2.27e-07$  \\
$1.0$  &  .0001 &  $0.796726361744928$ & $0.796727861914648$ & $1.50e-06$ \\
\hline
\end{tabular}
\end{center}
\end{table}

\section{Existence of $C^m$ positive eigenfunctions}
\label{sec:exist}
In this section we shall describe some results concerning existence of $C^m$
positive eigenfunctions for a class of positive (in the sense of
order-preserving) linear operators.  We shall later indicate how one can often
obtain explicit bounds on derivatives of the positive eigenfunctions.  As
noted above, such estimates play a crucial role in our numerical method and
therefore in obtaining rigorous estimates of Hausdorff dimension for invariant
sets associated with iterated function systems.  The methods we shall describe
can also be applied to the important case of graph directed iterated function
systems, but for simplicity we shall restrict our attention in this paper to a
class of linear operators arising in the iterated function system case.

The starting point of our analysis is Theorem 5.5 in \cite{E}, which we
now describe for a simple case. If $H$ is a bounded open subset of
$\R$ and $m$ is a positive integer, $C^m(\bar H)$ will denote the
set of real-valued $C^m$ maps $w:H \to \R$ such that all 
derivatives $D^{k} w$ with $0 \le k \le m$ extend continuously
to $\bar H$. Here $D^k w = d^k w/dx^k$ and $C^m(\bar H)$ is a real Banach
space with $\|w\| = \sup\{|D^{k} w(x)|: x \in H, 0 \le k \le m\}$.

Let $\B$ denote a finite index set with $|\B| = p$.  For $\be \in \B$, we
assume
\begin{align*}
&\text{(H4.1)} \ \,
g_{\be} \in C^m(\bar H) \text{ for all } \be \in \B \text{ and }
g_{\be}(x) > 0 \text{ for all } x \in \bar H \text{ and all } \be \in \B.
\\
&\text{(H4.2)} \ \,
\theta_{\be}:H \to H \text{ is a } C^m \text{ map for all } \be \in \B.
\end{align*}

In (H4.1) and (H4.2), we always assume that $m \ge 1$.

We define $\Lambda: C^m(\bar H) \to C^m(\bar H)$ by
\begin{equation}
\label{1.2}
(\Lambda(w))(x) = \sum_{\be \in \B} g_{\be}(x) w(\theta_{\be}(x)).
\end{equation}
For integers $\mu \ge 1$, we define $\B_{\mu} := \{\w = (j_1, \ldots j_{\mu})
\, | \, j_k \in \B \text{ for } 1 \le k \ \le \mu\}$. For
$\w = (j_1, \ldots j_{\mu}) \in \B_{\mu}$, we define $\w_{\mu} = \w$,
$\w_{\mu -1} = (j_1, \ldots j_{\mu-1})$,
$\w_{\mu -2} = (j_1, \ldots j_{\mu-2})$, $\cdots$, $\w_1 = j_1$.  We define
\begin{equation*}
%%\label{1.3}
\theta_{\w_{\mu-k}}(x) = (\theta_{j_{\mu-k}} \circ \theta_{j_{\mu-k-1}} \circ
\cdots \circ \theta_{j_{1}})(x),
\end{equation*}
so
\begin{equation*}
%%\label{1.4}
\theta_{\w}(x):= \theta_{\w_{\mu}}(x) = 
(\theta_{j_{\mu}} \circ \theta_{j_{\mu-1}} \circ
\cdots \circ \theta_{j_{1}})(x).
\end{equation*}

For $\w \in \B_{\mu}$, we define $g_{\w}(x)$ inductively by 
$g_{\w}(x) = g_{j_1}(x)$
if $\w = (j_1) \in \B:=\B_1$,
$g_{\w}(x) = g_{j_2}(\theta_{j_1}(x)) g_{j_1}(x)$ if $\w = (j_1,j_2) \in \B_2$
and, for $\w = (j_1,j_2, \ldots j_{\mu}) \in \B_{\mu}$,
\begin{equation*}
%%\label{1.5}
g_{\w}(x) = g_{j_\mu}(\theta_{\w_{j_{\mu-1}}}(x)) g_{\w_{\mu -1}}(x).
\end{equation*}

If is not hard to show (see \cite{A}, \cite{Bourgain-Kontorovich}, \cite{E})
that
\begin{equation}
\label{1.6}
(\Lambda^{\mu}(w))(x) = \sum_{\w \in \B_{\mu}} g_{\w}(x) w(\theta_\w(x)).
\end{equation}

It is easy to prove (see \cite{E}) that $\Lambda$ defines a bounded linear map
of $C^m(\bar H) \to C^m(\bar H)$.  We shall let $\hat \Lambda$ denote the
complexification of $\Lambda$ and let $\sigma(\hat \Lambda)$ denote the
spectrum of $\hat \Lambda$.  We shall define $\sigma(\Lambda) = \sigma(\hat
\Lambda)$. If all the functions $g_{\be}$ and $\theta_{\be}$ are $C^N$,
then we can consider $\Lambda$ as a bounded linear operator $\Lambda_m:
C^m(\bar H) \to C^m(\bar H)$ for $1 \le m \le N$, but one should note that in
general $\sigma(\Lambda_m)$ will depend on $m$.

To obtain a useful theory for $\Lambda$, we need a further crucial assumption.

(H4.3)  There exists a positive integer $\mu$ and a constant $\kappa <1$ such
that for all $\w \in \B_{\mu}$ and all $x,y \in H$,
%%\begin{equation*}
%%\label{1.7}
$|\theta_\w(x) - \theta_\w(y)| \le \kappa |x-y|$.
%%\end{equation*} 

If we define $c = \kappa^{1/\mu} <1$, it follows from (H4.3) that there
exists a constant $M$ such that for all $\w \in B_{\nu}$ and all $\nu \ge 1$,
\begin{equation}
\label{1.8}
|\theta_\w(x) - \theta_\w(y)| \le M c^{\nu} |x-y| \quad \forall x,y \in H.
\end{equation}
%If the norm $\|\cdot \|$ in \eqref{1.8} is replaced by a different norm
%$|\cdot |$, \eqref{1.8} remains valid, although with a different constant $M$.
%This in turn implies that (H4.3) will also be valid with the same constant
%$\kappa$, with $|\cdot|$ replacing $\|\cdot\|$ and with a possibly different
%integer $\mu$.

The following theorem is a special case of Theorem 5.5 in \cite{E}.

\begin{thm}
\label{thm:1.1} 
Let $H$ be a bounded open subset of $\R$, which is a finite union of
open intervals. Let $X = C^m(\bar H)$ and assume that (H4.1), (H4.2),
and (H4.3) are satisfied (where $m \ge 1$ in (H4.1) and (H4.2)) and
that $\Lambda:X \to X$ is given by \eqref{1.2}.  If $Y= C(\bar H)$,
the Banach space of real-valued continuous functions $w: \bar H \to
\R$ and $L:Y \to Y$ is defined by \eqref{1.2}, then $r(L) = r(\Lambda)
>0$, where $r(L)$ denotes the spectral radius of $L$ and $r(\Lambda)$
denotes the spectral radius of $\Lambda$.  If $\rho(\Lambda)$ denotes
the essential spectral radius of $\Lambda$ (see
\cite{B},\cite{A},\cite{N-P-L}, and \cite{L}), then $\rho(\Lambda) \le
c^m r(\Lambda)$ where $c= \kappa^{1/\mu}$ is as in \eqref{1.8}.  There
exists $v \in X$ such that $v(x) >0$ for all $x \in \bar H$ and
\begin{equation*}
%%\label{1.9}
\Lambda(v) = r v, \qquad r = r(\Lambda).
\end{equation*}
There exists $r_1 < r$ such that if $\xi \in \sigma(\Lambda)
\setminus\{r\}$,
then $|\xi| \le r_1$; and $r = r(\Lambda)$ is an isolated point of
$\sigma(\Lambda)$ and an eigenvalue of algebraic multiplicity 1. If $u \in X$
and $u(x) >0$ for all $x \in \bar H$, there exists a real number $s_u >0$ such
that
\begin{equation}
\label{1.10}
\lim_{k \rightarrow \infty}\left(\frac{1}{r} \Lambda\right)^k (u) = s_u v,
\end{equation}
where the convergence in \eqref{1.10} is in the $C^m$ topology on $X$.
\end{thm}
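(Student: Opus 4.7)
The plan is to establish the conclusions of Theorem~\ref{thm:1.1} in four stages: (i) produce a strictly positive eigenfunction of $L$ on $Y$ and show $r(L) > 0$; (ii) bound the essential spectral radius of $\Lambda$ on $X$; (iii) deduce $r(L) = r(\Lambda)$, algebraic simplicity, and the spectral gap; (iv) obtain the convergence \eqref{1.10}.

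For stage (i), the positivity and continuity of the $g_\be$ on the compact set $\bar H$ in (H4.1) give a uniform lower bound $g_\be \ge \delta > 0$, so $L^\mu \mathbf{1} \ge (p\delta)^\mu > 0$ pointwise and hence $r(L) = \lim_n \|L^n \mathbf{1}\|_\infty^{1/n} > 0$. The operator $L$ preserves the cone $K \subset Y$ of nonnegative continuous functions, and $K$ has nonempty interior. A Kre{\u\i}n--Rutman-type theorem for order-preserving operators on cones with nonempty interior (see \cite{Bonsall}, \cite{Krein-Rutman}, \cite{Mallet-Paret-Nussbaum}) then yields a nonzero $v \in K$ with $Lv = r(L)v$; applying $L$ once and invoking $g_\be \ge \delta$ forces $v(x) > 0$ for all $x \in \bar H$.

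Stage (ii) is the technical heart. Differentiating the iterate formula \eqref{1.6} $m$ times by the chain and Leibniz rules gives
\begin{equation*}
D^m(\Lambda^\nu w)(x) = \sum_{\w \in \B_\nu} g_\w(x)\bigl(\theta_\w'(x)\bigr)^m (D^m w)(\theta_\w(x)) + (R_\nu w)(x),
\end{equation*}
where $R_\nu w$ is a linear combination of derivatives of $w$ of order strictly less than $m$, with coefficients built from derivatives of $g_\w$ and $\theta_\w$ (controlled uniformly in $\nu$ once (H4.3) tames the growth from compositions). By \eqref{1.8} the leading sum has sup norm at most $(Mc^\nu)^m\,\|L^\nu \mathbf{1}\|_\infty\,\|D^m w\|_\infty$, while the compact embedding $C^m(\bar H)\hookrightarrow C^{m-1}(\bar H)$ from Arzel\`a--Ascoli makes $R_\nu : X \to X$ compact for each $\nu$. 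This is a Lasota--Yorke-type inequality; combined with Nussbaum's formula for the essential spectral radius (see \cite{L}, \cite{N-P-L}) it yields
\begin{equation*}
\rho(\Lambda) \le \limsup_{\nu \to \infty}\bigl((Mc^\nu)^m \|L^\nu \mathbf{1}\|_\infty\bigr)^{1/\nu} = c^m r(L).
\end{equation*}
The same Lasota--Yorke bound forces $r(\Lambda) \le r(L)$, and the reverse inequality is immediate from $\mathbf{1} \in X$ and $\|L^n \mathbf{1}\|_\infty \le \|\Lambda^n \mathbf{1}\|_X$, so $r(L) = r(\Lambda) =: r$.

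For stages (iii)--(iv), since $\rho(\Lambda) < r$, the value $r$ is an isolated point of $\sigma(\Lambda)$ and an eigenvalue of finite algebraic multiplicity, and its generalized eigenspace lies in $X$; in particular the $v$ from stage (i) belongs to $X$. To get algebraic simplicity and uniqueness of the peripheral eigenvalue, I would conjugate by $v$: the operator $\tilde\Lambda w := r^{-1} v^{-1}\Lambda(vw)$ is a Markov-type operator on $X$ with $\tilde\Lambda \mathbf{1} = \mathbf{1}$ and the same spectral-gap structure as $r^{-1}\Lambda$. The uniform lower bound $g_\be \ge \delta$ together with (H4.3) makes an iterate $\tilde\Lambda^\mu$ a strict contraction of the cone of positive functions in Hilbert's projective metric; a Birkhoff-contraction argument (as in \cite{Mallet-Paret-Nussbaum}) then forces every element of $\sigma(\tilde\Lambda) \setminus \{1\}$ to have modulus strictly less than $1$ and forces the eigenvalue $1$ to be algebraically simple. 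Translating back gives algebraic simplicity of $r$ and a number $r_1 < r$ with $\sigma(\Lambda) \setminus \{r\} \subset \{|z| \le r_1\}$. The Riesz spectral projection $P$ onto $\ker(\Lambda - rI)$ then satisfies $(r^{-1}\Lambda)^k = P + (r^{-1}\Lambda)^k(I-P)$, and the restriction of $\Lambda$ to $\operatorname{range}(I-P)$ has spectral radius $r_1$, so the remainder decays like $(r_1/r)^k$ in the $C^m$ norm; positivity of $P$ and of $u$ yields $Pu = s_u v$ with $s_u > 0$, which is \eqref{1.10}. The main obstacle is the bookkeeping in stage (ii): one must organize the chain-rule expansion of $D^m(\Lambda^\nu w)$ over $\nu$ compositions carefully so that all non-leading terms fit into a single compact operator $R_\nu$ whose norm is controlled independently of the exponential gain $c^{m\nu}$ in the leading term.
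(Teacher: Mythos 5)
The paper does not prove this theorem: it is quoted verbatim as a special case of Theorem 5.5 of \cite{E}, so the only fair comparison is with the argument there. Your overall architecture (a Lasota--Yorke-type bound giving $\rho(\Lambda)\le c^m r(\Lambda)$, then a generalized Kre{\u\i}n--Rutman argument for the eigenfunction, then cone-contraction methods for simplicity and the gap, then the Riesz projection for \eqref{1.10}) is indeed the standard route and matches \cite{E} in outline. But two steps, as written, would fail. First, stage (i): on $Y=C(\bar H)$ the operator $L$ is not compact and, as the paper itself warns after \eqref{intro1.3}, the peripheral spectral gap can fail for $k=0$; so no ``Kre{\u\i}n--Rutman-type theorem for cones with nonempty interior'' produces an eigenfunction of $L$ in $Y$ from positivity alone. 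One must either obtain $v$ first in $X=C^m(\bar H)$ (where $\rho(\Lambda)<r(\Lambda)$ makes the generalized Kre{\u\i}n--Rutman theorem of \cite{Mallet-Paret-Nussbaum} applicable) and observe it is then automatically an eigenfunction of $L$, or run a Schauder--Tychonoff fixed-point argument on a compact, invariant, convex subset of the cone, e.g.\ normalized functions satisfying $w(x)\le e^{M|x-y|}w(y)$. Second, stage (iii): the image of the full cone of nonnegative functions under $\tilde\Lambda^{\mu}$ has \emph{infinite} projective diameter, because a nonnegative continuous $w$ can vanish on $\{\theta_{\w}(y):\w\in\B_\mu\}$ for some $y$ while being positive at some $\theta_{\w}(x)$, making $M(\tilde\Lambda^\mu w/\tilde\Lambda^\mu w')$ unbounded; the Birkhoff--Hopf theorem therefore gives no contraction there. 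The fix is exactly the one the paper uses for the matrices $A_s,B_s$ in Section~\ref{sec:compute-sr}: work in a cone of functions with uniformly bounded logarithmic oscillation (the analogue of $K_M$ in \eqref{9.3}), show an iterate maps $K_M\setminus\{0\}$ into $K_{M'}\setminus\{0\}$ with $M'<M$, and then invoke \cite{U} for simplicity and the spectral clearance.

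A smaller but real technical point in stage (ii): you cannot define the compact remainder $R_\nu$ as an operator on $C^m(\bar H)$ merely by designating which terms of $D^m(\Lambda^\nu w)$ it contributes, since an element of $C^m$ is not determined by its $m$-th derivative alone and the lower-order derivatives of $\Lambda^\nu w$ must also be assigned to one summand or the other. The clean route (and the one in \cite{E}) is to estimate the measure of noncompactness of $\Lambda^\nu$ directly, using that a bounded subset of $C^m(\bar H)$ is precompact iff the $m$-th derivatives are equicontinuous, so that only the leading term $\sum_{\w}g_{\w}(x)(\theta_{\w}'(x))^m(D^mw)(\theta_{\w}(x))$ contributes, with the bound $(Mc^{\nu})^m\|L^{\nu}\mathbf{1}\|_{\infty}$ you state; Nussbaum's radius formula then yields $\rho(\Lambda)\le c^m r(L)$ as you claim.
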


\begin{remark}
  \label{rem:1.2} If $l$ is an integer satisfying $0 \le l \le m$, where $m \ge
  1$ is as in (H4.1) and (H4.2), it follows from \eqref{1.10} that
\begin{equation}
\label{1.11}
\lim_{k \rightarrow \infty} \left(\frac{1}{r}\right)^k D^{l} \Lambda^k(u) 
=s_u D^{l} v,
\end{equation}
and
\begin{equation}
\label{1.12}
\lim_{k \rightarrow \infty} \left(\frac{1}{r}\right)^k \Lambda^k(u) 
=s_u v,
\end{equation}
where the convergence in \eqref{1.11} and \eqref{1.12} is in the topology
of $C(\bar H)$, the Banach space of continuous functions $w: \bar H \to \R$.
\end{remark}
It follows from \eqref{1.11} and \eqref{1.12} that for any integer $l$ with
$0 \le l \le m$,
\begin{equation}
\label{1.13}
\lim_{k \rightarrow \infty} \frac{(D^{l} \Lambda^k(u))(x)}
{\Lambda^k(u)(x)} = \frac{(D^{l} (v))(x)}
{v(x)},
\end{equation}
where the convergence in \eqref{1.13} is uniform in $x \in \bar H$.
If we choose $u(x) =1$ for all $x \in \bar H$, it follows from \eqref{1.6}
that for all integers $l$ with $0 \le l \le m$, we have
\begin{equation}
\label{1.14}
\lim_{k \rightarrow \infty}  \frac{D^{l} (\sum_{\w \in \B_k} g_\w(x))}
{\sum_{\w \in \B_k} g_\w (x)} = \frac{D^{l} v(x)}{v(x)},
\end{equation}
where the convergence in \eqref{1.14} is uniform in $x \in \bar H$. We shall
use \eqref{1.14} in our further work to obtain explicit bounds on 
$\sup\left\{|D^{l} v(x)|/v(x): x \in \bar H\right\}$.

\section{Estimates for derivatives of $v_s$: Mappings of form
\eqref{intro1.2}}
\label{sec:1d-deriv}
Throughout this section, we shall assume for simplicity that
$H=(a_1,a_2)$ is a bounded,
open interval, although it is frequently natural to take $H$ to be the finite
union of disjoint intervals. $\B$ will denote a finite index set.
For $\be \in \B$ and some integer $m \ge 1$, we assume

\noindent (H5.1:) For each $\be \in \B$, $g_{\be} \in C^m(\bar H)$,
$\theta_{\be} \in C^m(\bar H)$, $g_{\be}(x) >0$ for all $x \in \bar H$ and
$\theta_{\be}(H) \subset H$.  There exist an integer $\mu \ge 1$ and a real
number $\kappa <1$ such that for all $\omega \in \B_{\mu}:= \{(\be_1,
\be_2, \cdots, \be_{\mu}) \, | \, \be_j \in \B$ for $1 \le j \le \mu\}$
and for all $x,y \in \bar H$, $|\theta_{\omega}(x) - \theta_{\omega}(y)| \le
\kappa |x-y|$, where $\theta_{\omega}:= \theta_{\be_{\mu}} \circ
\theta_{\be_{\mu-1}} \circ \cdots \circ \theta_{\be_1}$ for $\omega =
(\be_1, \be_2, \cdots, \be_{\mu}) \in \B_{\mu}$.

As in Section~\ref{sec:exist}, we define $Y = C(\bar H)$ and $X_m = C^m(\bar
H)$.  Assuming (H5.1), we define for $s \ge 0$, a bounded linear operator
$L_s:Y \to Y$ by
\begin{equation}
\label{3.1} (L_s w)(x) = \sum_{\be \in \B} [g_{\be}(x)]^s w(\theta_{\be}(x)).
\end{equation} 
As in Section~\ref{sec:exist}, $L_s(X_m) \subset X_m$ and $L_s |_{X_m}$
defines a bounded linear map of $X_m$ to $X_m$ which we denote by $\Lambda_s$.
Theorem~\ref{thm:1.1} is now directly applicable (replace $g_{\be}(x)$ in
Theorem~\ref{thm:1.1} by $[g_{\be}(x)]^s$) and yields information about
$\sigma(\Lambda_s)$.  In particular, $r(L_s) = r(\Lambda_s) >0$ and there
exists a unique (to within normalization) strictly positive, $C^m$
eigenfunction $v_s$ of $\Lambda_s$ with eigenvalue $\lambda_s = r(\Lambda_s)$.

If $\omega = (\be_1, \be_2, \ldots, \be_p) \in \B_p$, recall that
we define $g_{\omega}(x)$ by
\begin{equation*}
g_{\omega}(x) = g_{\be_p}(\theta_{\be_{p-1}} \circ \theta_{\be_{p-2}}
\circ \cdots \circ \theta_{\be_{1}}(x)) \cdots
g_{\be_3}((\theta_{\be_{2}} \circ \theta_{\be_{1}})(x))
g_{\be_2}((\theta_{\be_{1}}(x)) g_{\be_1}(x),
\end{equation*}
and
\begin{equation}
\label{3.2}
(L_s^p w)(x) = \sum_{\omega \in \B_p} [g_{\omega} (x)]^s w(\theta_{\omega}(x)).
\end{equation}

Notice that $L_s^p$ is of the same form as $L_s$ and Theorem~\ref{thm:1.1}
is also directly applicable to  $L_s^p$.  Since $v_s$ is also an eigenfunction
of $L_s^p$, we can also work with \eqref{3.2} instead of \eqref{3.1}:
$\B_p$ is an index set corresponding to $\B$, $g_{\omega}$, $\omega \in
\B_p$, corresponds to $g_{\be}$, $\be \in \B$, and $\theta_{\omega}$,
$\omega \in \B_p$, corresponds to $\theta_{\be}$, $\be \in \B$.

If $m$ is as in (H5.1) and $k$ is a positive integer with $k \le m$,
we define $D = d/dx$, so $(Df)(x) = f^{\prime}(x)$ and
$(D^kf)(x) = f^{(k)}(x)$.  We are interested in obtaining estimates for
\begin{equation}
\label{3.3}
\sup \{|D^k v_s(x)|/v_s(x) : x \in \bar H\}.
\end{equation}
We note that the estimates we shall give below can
be refined as in Section 6 of \cite{hdcomplt}, but for simplicity we shall
omit these refinements.

%Notation and assumptions will be as in
%Section~\ref{sec:intro}, and $\Lambda_s$ and $L_s$ will be as in \eqref{2.1}
%and \eqref{2.2}.  In particular, we shall always assume that hypotheses
%(H4.1), (H4.2), and (H4.3) hold with $m \ge 1$.  However, we shall also assume
%here that we are working in dimension one, so $H$ will denote a finite union
%of bounded open intervals and $\theta_{\be}: \bar H \to \R$ for $\be
%\in \B$. As in Section~\ref{sec:prelim}, $v_s$ will denote a strictly positive
%eigenvector of $\Lambda_s$ with positive eigenvalue $r(\Lambda_s) = r(L_s)$;
%and $Df(x)$ will denote $f^{\prime}(x) = (df/dx)(x)$.

%If $k$ is a positive integer with $k \le m$, we are interested in obtaining
%estimates for
%\begin{equation}
%\label{3.1}
%\sup \Big\{\frac{|D^kv_s(x)|}{v_s(x)} : x \in \bar H\Big\}.
%\end{equation}
%Since $v_s$ is also a strictly positive eigenvector for $(\Lambda_s)^{\mu}$,
%we could equally well work with $(\Lambda_s)^{\mu}$, especially since
%$\Lambda_s$ and $(\Lambda_s)^{\mu}$ have the same form (see \eqref{1.2} and
%\eqref{1.6}). The advantage of working with $(\Lambda_s)^{\mu}$ is that
%relevant constants sometimes take a more useful form for $(\Lambda_s)^{\mu}$
%than for $\Lambda_s$: (see Remark~\ref{rem:2.3}).  A disadvantage of working
%with $(\Lambda_s)^{\mu}$ is that if $|\B| = p$, $|\B_{\mu}| = p^{\mu}$. Our
%results below can also be considered as applying to $(L_s)^{\mu}$ by using
%\eqref{1.6} and thinking of $\B_{\mu}$ as the index set.

First observe that Hypothesis (H5.1) implies that there exist constants $M >0$
and $c = \kappa^{1/\mu}$, (so $c <1$), such that for
all integers $\nu \ge 1$ and all $\w \in \B_{\nu}$, \eqref{1.8} is satisfied.
%\begin{equation}
%\label{3.4}
%|\theta_\w(x) - \theta_\w(y)| \le M c^{\nu} |x-y|.
%\end{equation}
%It follows that if we define $\epsilon_0 =1$, and for $\nu \ge 1$,
%\begin{equation}
%\label{3.5}
%\epsilon_{\nu}:= \sup \Big\{|\theta_\w(x) - \theta_\w(y)|/|x-y| :
%\w \in \B_{\nu} \text{ and } x,y \in H, x \neq y \Big\},
%\end{equation}
%we have that $\epsilon_{\nu} \le M c^{\nu}$ and $\sum_{\nu =1}^{\infty}
%\epsilon_{\nu} < \infty$.
%Because we assume that $H$ is an interval, it is also easy to see that
%\begin{equation}
%\label{3.6}
%\epsilon_{\nu} = \sup\{|\theta_\w^{\prime}(x)| : \w \in \B_{\nu} \text{ and }
%x \in H\}.
%\end{equation}
%By using the numbers $\epsilon_{\nu}$

%\begin{equation}
%\label{3.4}
%\sum_{\nu =1}^{\infty} \epsilon_{\nu} < \infty.
%\end{equation}

If $\mu$ is as in (H5.1), we define a constant $C_1$ by
\begin{equation}
\label{3.7}
C_1 = \sup\Big\{\frac{|g_{\w}^{\prime}(x)|}{g_{\w}(x)}: \w \in \B_{\mu}, x \in \bar H
\Big\}.
\end{equation}

A calculation shows that for all $\w \in \B_{\nu}$, $\nu \ge 1$,
\begin{equation}
\label{3.8}
\frac{D [g_{\w}(x)^s]}{[g_{\w}(x)]^s}
= s \frac{g_{\w}^{\prime}(x)}{g_{\w}(x)},
\end{equation}
so
\begin{equation*}
%%\label{3.9}
\sup \Big\{\frac{|D [g_{\w}(x)^s]|}{[g_{\w}(x)]^s}: 
\w \in \B_{\mu}, x \in \bar H \Big\} = s C_1.
\end{equation*}

We begin by considering \eqref{3.3} for the case $k=1$. In our applications,
we shall only need the case $s >0$, so we shall restrict our attention to
this case.
\begin{thm}
\label{thm:3.1}
Assume that (H5.1) is satisfied, let $\mu$, $m$, and $\kappa$ be as in 
(H5.1) and let $C_1$ be as in \eqref{3.7},  For $s >0$, let
$v_s$ denote the unique (to within normalization) strictly positive
eigenfunction of $\Lambda_s := L_s|_{X_m}$.  Then we have
\begin{equation}
\label{3.10}
\sup \Big\{\frac{|v_s^{\prime}(x)|}{v_s(x)} : x \in \bar H\Big\} 
\le \frac{C_1 s}{1- \kappa}:= M_1.
\end{equation}
If $\delta \in \{0,1\}$ and $(-1)^{\delta} g_\w^{\prime}(x)/g_\w(x))\le 0$
for all $\w \in \B_{\nu}$, all $\nu \ge 1$ and all $x \in \bar H$, then
$(-1)^{\delta} v_s^{\prime}(x) \le 0$ for all $x \in \bar H$ and all $s >0$.
\end{thm}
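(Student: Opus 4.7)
The plan is to differentiate the $\mu$-fold iterated eigenvalue equation $L_s^\mu v_s = \lambda_s^\mu v_s$. Iterating $\mu$ times is crucial: by (H5.1), each $\theta_\omega$ with $\omega \in \B_\mu$ is Lipschitz with constant $\kappa<1$, and since $\theta_\omega \in C^m(\bar H)$ one extracts the pointwise derivative bound $|\theta_\omega'(x)| \le \kappa$ on $\bar H$ by passing to the limit in the difference quotient and using continuity of $\theta_\omega'$. Note that $M_1 := \sup_{x\in\bar H} |v_s'(x)|/v_s(x)$ is already a priori finite since $v_s \in C^1(\bar H)$ is strictly positive on the compact set $\bar H$; the task is only to turn this finiteness into the quantitative bound~\eqref{3.10}.

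\textbf{Main computation.} Differentiating $L_s^\mu v_s = \lambda_s^\mu v_s$ via \eqref{3.2} and using the identity \eqref{3.8} for $D[g_\omega^s]$, one obtains
\[
\lambda_s^\mu v_s'(x) = \sum_{\omega \in \B_\mu} s [g_\omega(x)]^s \frac{g_\omega'(x)}{g_\omega(x)} v_s(\theta_\omega(x)) + \sum_{\omega \in \B_\mu} [g_\omega(x)]^s \theta_\omega'(x) v_s'(\theta_\omega(x)).
\]
Taking absolute values, invoking the definition \eqref{3.7} of $C_1$ on the first sum, using $|\theta_\omega'(x)| \le \kappa$ together with the pointwise bound $|v_s'(\theta_\omega(x))| \le M_1\, v_s(\theta_\omega(x))$ on the second, and observing that both sums then collapse into multiples of $(L_s^\mu v_s)(x) = \lambda_s^\mu v_s(x)$, one gets
\[
\lambda_s^\mu |v_s'(x)| \le (s C_1 + \kappa M_1)\, \lambda_s^\mu v_s(x).
\]
Dividing by $\lambda_s^\mu v_s(x) > 0$ and taking the supremum over $x \in \bar H$ yields $M_1 \le s C_1 + \kappa M_1$, which rearranges to $M_1 \le s C_1/(1-\kappa)$, proving \eqref{3.10}.

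\textbf{Sign assertion and obstacle.} For the second claim, the plan is to apply Remark~\ref{rem:1.2} with $u \equiv 1$: by \eqref{1.14}, $v_s'(x)/v_s(x)$ is the uniform limit as $k \to \infty$ of
\[
\frac{\sum_{\omega \in \B_k} s [g_\omega(x)]^s \, g_\omega'(x)/g_\omega(x)}{\sum_{\omega \in \B_k} [g_\omega(x)]^s}.
\]
The denominator is positive, and under the hypothesis $(-1)^\delta g_\omega'(x)/g_\omega(x) \le 0$ for every $\omega$, so $(-1)^\delta$ times each summand of the numerator is $\le 0$; hence the ratio, its limit $v_s'(x)/v_s(x)$, and therefore $v_s'(x)$ itself all have sign $(-1)^{\delta+1}$ (or are zero), which is the desired conclusion. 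The only delicate step in the whole argument is the passage from the Lipschitz bound in (H5.1) to the pointwise derivative bound $|\theta_\omega'(x)| \le \kappa$ on $\bar H$, but this is standard; everything else is a one-line contraction argument, with the eigenvalue equation doing essentially all the work.
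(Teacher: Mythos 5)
Your proof of \eqref{3.10} is exactly the paper's argument: differentiate the $\mu$-fold iterated eigenvalue equation, bound the $g$-term by $sC_1$ and the chain-rule term by $\kappa M_1$ using the a priori finiteness of $M_1$, and rearrange. Your treatment of the sign assertion via \eqref{1.14} with $u\equiv 1$ is also the intended route (it is how the paper argues the analogous sign claims in Theorem~\ref{thm:3.4}; the paper's own proof of Theorem~\ref{thm:3.1} silently omits this part, so you have in fact supplied a detail the paper leaves out).
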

\begin{proof}
Recall from Section~\ref{sec:exist} that $v_s$ is (after normalization)
also the unique eigenfunction of $\Lambda_s^{\mu}$ with eigenvalue $r^{\mu}$,
where $r = r(\Lambda_s)$ and $r^{\mu}$ is the spectral radius of 
$\Lambda_s^{\mu}$. Define $\hat M_1$ by
\begin{equation*}
\hat M_1 = \sup \Big\{\frac{|v_s^{\prime}(x)|}{v_s(x)} : x \in \bar H \Big\}.
\end{equation*}
We shall prove that $\hat M_1 \le M_1$. For notational
convenience we write for $\w \in \B_{\mu}$
\begin{equation*}
f_{\w}(x) = [g_{\w}(x)]^s v_s(\theta_{\w}(x)).
\end{equation*}
Then we see that
\begin{equation}
\label{3.11}
\Big|\frac{\lambda_s v_s^{\prime}(x)}{\lambda_s v_s(x)} \Big|
= \frac{|\lambda_s v_s^{\prime}(x)|}{\lambda_s v_s(x)}
= \frac{\Big| \sum_{\w \in \B_{\mu}} f_{\w}^{\prime}(x)\Big|}
{\sum_{\w \in \B_{\mu}} f_{\w}(x)}
\le \frac{\sum_{\w \in \B_{\mu}} |f_{\w}^{\prime}(x)|}
{\sum_{\w \in \B_{\mu}} f_{\w}(x)}.
\end{equation}

A calculation shows that
\begin{equation*}
\frac{f_{\w}^{\prime}(x)}{f_{\w}(x)} = s \frac{g_{\w}^{\prime}(x)}{g_{\w}(x)}
+ \frac{v_s^{\prime}(\theta_{\w}(x)) \theta_{\w}^{\prime}(x)}
{v_s(\theta_{\w}(x))},
\end{equation*}
so
\begin{equation*}
\frac{|f_{\w}^{\prime}(x)|}{f_{\w}(x)} \le s C_1 + \hat M_1 \kappa
\end{equation*}
and
\begin{equation}
\label{3.12}
\frac{\sum_{\w \in \B_{\mu}} |f_{\w}^{\prime}(x)|}
{\sum_{\w \in \B_{\mu}} f_{\w}(x)} \le s C_1 + \hat M_1 \kappa.
\end{equation}

Taking the maximum of the left hand side of \eqref{3.11}, we deduce
from \eqref{3.11} and \eqref{3.12} that
\begin{equation}
\label{3.13}
\hat M_1 \le sC_1 + \hat M_1 \kappa
\end{equation}
and \eqref{3.13} implies that
\begin{equation*}
%%\label{3.14}
\hat M_1 \le sC_1/(1- \kappa) = M_1.
\end{equation*}
\end{proof}

Throughout the remainder of this section, $C_1$ will be as in \eqref{3.7}
and $M_1$ will be as in \eqref{3.10}.
Assuming that $m$ and $\mu$ are as in (H5.1) and $m \ge 2$, it will also
be convenient to define constants $C_2$, $E_2$, and $K_2$ by
\begin{equation}
\label{3.15}
C_2 = \sup \Big\{\frac{|g_{\w}^{\prime\prime}(x)|}{g_{w}(x)}: 
\w \in \B_{\mu}, x \in \bar  H\Big\},
\end{equation}
\begin{equation*}
%%\label{3.16}
E_2 = \sup \Big\{|\theta_{\w}^{\prime\prime}(x)|: \w \in \B_{\mu}, x \in \bar
 H\Big\},
\end{equation*}
\begin{equation*}
%%\label{3.17}
K_2 = \sup \Big\{\frac{| g_{\w}^{\prime\prime}(x) g_{w}(x) 
- (1-s) [g_{\w}^{\prime}(x)]^2|}
{[g_{\w}(x)]^2}: \w \in \B_{\mu}, x \in \bar
 H\Big\}.
\end{equation*}
Notice that we always have the estimate
$K_2 \le C_2 + |1-s|C_1^2$, but sometimes more precise estimates for
$K_2$ can be obtained.

\begin{thm}
\label{thm:3.2}
Assume that (H5.1) is satisfied with $m \ge 2$ and let $\mu$, $m$, and
$\kappa$ be as in (H5.1). Assume that $s >0$ and let $C_1$, $M_1$, $C_2$,
$E_2$, and $K_2$ be as defined above. Let $v_s$ denote the unique (to within
normalization) strictly positive eigenfunction of $\Lambda_s: X_m \to X_m$
with eigenvalue $r(\Lambda_s)$.  Then we have
\begin{equation}
\label{3.18}
\sup\Big\{\frac{|v_s^{\prime\prime}(x)|}{v_s(x)}: x \in \bar H\Big\} \le M_2,
\end{equation}
where
\begin{equation}
\label{3.19}
M_2: = (s K_2 + 2s C_1M_1 \kappa + M_1 E_2)/(1 - \kappa^2).
\end{equation}
\end{thm}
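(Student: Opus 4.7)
The plan is to mirror the proof of Theorem~\ref{thm:3.1}, now working at second-derivative level. As before, I exploit the fact that $v_s$ is also a strictly positive eigenfunction of $\Lambda_s^\mu$ with eigenvalue $\lambda_s^\mu$, so using \eqref{3.2} I have the identity
\begin{equation*}
\lambda_s^\mu v_s(x) \;=\; \sum_{\w \in \B_\mu} f_\w(x), \qquad f_\w(x) := [g_\w(x)]^s \, v_s(\theta_\w(x)),
\end{equation*}
and the same identity holds after differentiating both sides twice. Defining
\begin{equation*}
\hat M_2 := \sup\{|v_s^{\prime\prime}(x)|/v_s(x): x \in \bar H\},
\end{equation*}
I will derive a linear inequality of the form $\hat M_2 \le sK_2 + 2sC_1 M_1 \kappa + M_1 E_2 + \hat M_2 \kappa^2$, from which \eqref{3.18} and \eqref{3.19} follow immediately.

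The key step is to compute $f_\w^{\prime\prime}/f_\w$ by the product and chain rules. Writing $A(x) = [g_\w(x)]^s$ and $B(x) = v_s(\theta_\w(x))$, so $f_\w = AB$, I obtain
\begin{equation*}
\frac{f_\w^{\prime\prime}}{f_\w} \;=\; \frac{A^{\prime\prime}}{A} + 2\,\frac{A^{\prime}}{A}\cdot\frac{B^{\prime}}{B} + \frac{B^{\prime\prime}}{B}.
\end{equation*}
A direct computation gives
\begin{equation*}
\frac{A^{\prime\prime}}{A} \;=\; s\,\frac{g_\w^{\prime\prime}(x) g_\w(x) - (1-s)[g_\w^{\prime}(x)]^2}{[g_\w(x)]^2},
\end{equation*}
which is exactly the expression whose supremum over $\w \in \B_\mu$, $x \in \bar H$ is $sK_2$, by the definition of $K_2$. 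For the cross term, $A^{\prime}/A = s\,g_\w^{\prime}/g_\w$ and $B^{\prime}/B = v_s^{\prime}(\theta_\w(x))\theta_\w^{\prime}(x)/v_s(\theta_\w(x))$, so using $|g_\w^{\prime}|/g_\w \le C_1$, Theorem~\ref{thm:3.1}, and the fact that the contraction hypothesis (H5.1) yields $|\theta_\w^{\prime}(x)| \le \kappa$ for $\w \in \B_\mu$, this term is bounded in absolute value by $2sC_1 M_1 \kappa$. Finally, $B^{\prime\prime}/B = v_s^{\prime\prime}(\theta_\w(x))(\theta_\w^{\prime}(x))^2/v_s(\theta_\w(x)) + v_s^{\prime}(\theta_\w(x))\theta_\w^{\prime\prime}(x)/v_s(\theta_\w(x))$, which is bounded by $\hat M_2\kappa^2 + M_1 E_2$.

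Putting these estimates together and multiplying through by $f_\w(x) > 0$,
\begin{equation*}
|f_\w^{\prime\prime}(x)| \;\le\; \bigl(sK_2 + 2sC_1 M_1 \kappa + \hat M_2 \kappa^2 + M_1 E_2\bigr)\,f_\w(x).
\end{equation*}
Summing over $\w \in \B_\mu$, using the triangle inequality on the left, and dividing by $\lambda_s^\mu v_s(x) = \sum_\w f_\w(x)$ yields
\begin{equation*}
\frac{|v_s^{\prime\prime}(x)|}{v_s(x)} \;\le\; sK_2 + 2sC_1 M_1 \kappa + M_1 E_2 + \hat M_2\,\kappa^2.
\end{equation*}
Taking the supremum over $x \in \bar H$ of the left-hand side gives $\hat M_2(1-\kappa^2) \le sK_2 + 2sC_1 M_1 \kappa + M_1 E_2$, which is \eqref{3.18}--\eqref{3.19}.

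The only real obstacle is the bookkeeping of the $A^{\prime\prime}/A$ term: recognizing that the combination $s(s-1)(g_\w^{\prime}/g_\w)^2 + s\,g_\w^{\prime\prime}/g_\w$ is exactly $s$ times the expression whose sup defines $K_2$ is what allows one to avoid the cruder bound $s(C_2 + |1-s|C_1^2)$; everything else is structurally identical to the proof of Theorem~\ref{thm:3.1}, with the contraction factor $\kappa$ replaced by $\kappa^2$ because $(\theta_\w^{\prime})^2$ appears rather than $\theta_\w^{\prime}$.
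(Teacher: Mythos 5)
Your proposal is correct and follows essentially the same route as the paper: the same decomposition $f_\w''/f_\w = A''/A + 2(A'/A)(B'/B) + B''/B$ with $f_\w = [g_\w]^s v_s\circ\theta_\w$, the same identification of $A''/A$ with the expression defining $sK_2$, the same bounds $2sC_1M_1\kappa$, $\hat M_2\kappa^2 + M_1E_2$ on the remaining terms, and the same self-referential inequality $\hat M_2 \le sK_2 + 2sC_1M_1\kappa + M_1E_2 + \hat M_2\kappa^2$. No gaps.
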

\begin{proof}
As in the proof of Theorem~\ref{thm:3.1}, for $\w \in \B_{\mu}$, let
$f_{\w}(x) = [g_{\w}(x)]^s v_s(\theta_{\w}(x))$ and observe that
\begin{equation}
\label{3.20}
\Big|\frac{\lambda_s v_s^{\prime\prime}(x)}{\lambda_s v_s(x)} \Big|
= \frac{|\lambda_s v_s^{\prime\prime}(x)|}{\lambda_s v_s(x)}
= \frac{\Big| \sum_{\w \in \B_{\mu}} f_{\w}^{\prime\prime}(x)\Big|}
{\sum_{\w \in \B_{\mu}} f_{\w}(x)}
\le \frac{\sum_{\w \in \B_{\mu}} |f_{\w}^{\prime\prime}(x)|}
{\sum_{\w \in \B_{\mu}} f_{\w}(x)}.
\end{equation}
A calculation shows that
\begin{multline}
\label{3.21}
\frac{f_{\w}^{\prime\prime}(x)}{f_{\w}(x)} = \Big[s(s-1) 
\Big(\frac{g_{\w}^{\prime}(x))}{g_{\w}(x)}\Big)^2 +s 
\frac{g_{\w}^{\prime\prime}(x)}{g_{\w}(x)}\Big]
+ 2s \Big[\frac{g_{\w}^{\prime}(x)}{g_{\w}(x)}
\frac{v_s^{\prime}(\theta_{\w}(x))
  \theta_{\w}^{\prime}(x)}{v_s(\theta_{\w}(x))}\Big]
\\
+  \Big[\frac{v_s^{\prime\prime}(\theta_{\w}(x))}{v_s(\theta_{\w}(x))}
  \Big(\theta_{\w}^{\prime}(x)\Big)^2
+ \frac{v_s^{\prime}(\theta_{\w}(x))}{v_s(\theta_{\w}(x))}
  \Big(\theta_{\w}^{\prime\prime}(x)\Big) \Big].
\end{multline}
If we define $\hat M_2 = \sup\{|v_s^{\prime\prime}(x)|/v_s(x): x \in \bar H\}$,
we obtain from
\eqref{3.21} that
\begin{equation}
\label{3.22}
\frac{|f_{\w}^{\prime\prime}(x)|}{f_{\w}(x)} \le s K_2 + 2s[C_1 M_1 \kappa]
+ \hat M_2 \kappa^2 + M_1 E_2,
\end{equation}
and using \eqref{3.22} and \eqref{3.20}, we see that
\begin{equation*}
%%\label{3.23}
\hat M_2 \le s K_2 + 2s C_1 M_1 \kappa + M_1 E_2 + \hat M_2 \kappa^2,
\end{equation*}
which implies that $\hat M_2 \le M_2$ (defined in \eqref{3.19}).
\end{proof}

\begin{remark}
\label{rem:3.1}
If one has obtained bounds for $\sup \{|D^jv_s(x)|/v_s(x): x \in \bar H\}$
for $1 \le j \le k$, it is not hard to show that the kind of argument
in the proof of Theorem~\ref{thm:3.2} can be used to estimate
$\sup \{|D^{k+1}v_s(x)|/v_s(x): x \in \bar H\}$.
\end{remark}
Rather than give a formal proof of the general case, we shall restrict
ourselves here to obtaining an estimate for
$\sup \{|D^{3}v_s(x)|/v_s(x): x \in \bar H\}$.
To state our theorem, it will be convenient to introduce further
constants $C_3$, $E_3$, and $K_3$:
\begin{equation*}
%%\label{3.24}
C_3 = \sup \Big\{\frac{|D^3 g_{\w}(x)|}{g_{w}(x)}: \w \in \B_{\mu}, x \in \bar
 H\Big\},
\end{equation*}
\begin{equation*}
%%\label{3.25}
E_3 = \sup \Big\{|D^3 \theta_{\w}(x)|: \w \in \B_{\mu}, x \in \bar
 H\Big\},
\end{equation*}
\begin{equation*}
%%\label{3.26}
K_3 
= \sup \Big\{\frac{|(s-1)(s-2) [g_{\w}^{\prime}(x)]^3
+ 3(s-1) g_{\w}(x) g_{\w}^{\prime}(x) g_{\w}^{\prime\prime}(x) 
+ [g_{\w}(x)]^2 g_{\w}^{\prime\prime\prime}(x))|}
{[g_{\w}(x)]^3} \Big\},
\end{equation*}
where the supremum is taken over $\w \in \B_{\mu}$ and $x \in \bar  H$.

A crude estimate for $K_3$ in terms of $C_1$, $C_2$, and $C_3$ can be
given:
\begin{equation*}
K_3 \le |s-1| |s-2| C_1^3 + 3 |s-1| C_1 C_2 + C_3.
\end{equation*}
However, better estimates are frequently available.

\begin{thm}
\label{thm:3.3}
Assume that (H5.1) is satisfied with $m \ge 3$ and let $\mu$, $m$, and
$\kappa$ be as in (H5.1). Assume that $s >0$ and let $C_1$, $M_1$, $C_2$,
$E_2$, $K_2$, $M_2$, $C_3$, $E_3$, and $K_3$ be as defined above.  If $v_s$ is
the normalized strictly positive eigenfunction of $\Lambda_s: X_m \to X_m$
with eigenvalue $r(\Lambda_s)$, then we have
\begin{equation}
\label{3.27}
\sup\Big\{\frac{|D^3 v_s(x)|}{v_s(x)}: x \in \bar H\Big\} \le M_3,
\end{equation}
where
\begin{multline*}
(1- \kappa^3) M_3 
\\
= (s K_3 + 3s K_2 M_1 \kappa + 3sC_1(M_2 \kappa^2 + M_1 E_2)
+ 3 M_2 \kappa E_2 + M_1 E_3) := S.
\end{multline*}
\end{thm}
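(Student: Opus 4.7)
The plan is to mimic the proofs of Theorems 3.1 and 3.2 at one higher derivative. Because $v_s$ is also the normalized strictly positive eigenfunction of $\Lambda_s^\mu$ with eigenvalue $\lambda_s^\mu$, the identity $\lambda_s v_s(x) = \sum_{\w \in \B_\mu} f_\w(x)$ holds with $f_\w(x) := [g_\w(x)]^s v_s(\theta_\w(x))$. Differentiating three times, applying the triangle inequality, and dividing by $\lambda_s v_s(x) > 0$ gives
\begin{equation*}
\frac{|D^3 v_s(x)|}{v_s(x)} \;\le\; \frac{\sum_{\w \in \B_\mu} |D^3 f_\w(x)|}{\sum_{\w \in \B_\mu} f_\w(x)} \;\le\; \sup_{\w,x}\frac{|D^3 f_\w(x)|}{f_\w(x)},
\end{equation*}
so the whole argument reduces to a pointwise estimate on $|D^3 f_\w|/f_\w$ from which the desired self-bounding inequality will be extracted.

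Next, writing $f_\w = u\cdot w$ with $u(x) = [g_\w(x)]^s$ and $w(x) = v_s(\theta_\w(x))$, I would apply the Leibniz rule to obtain
\begin{equation*}
\frac{D^3(uw)}{uw} \;=\; \frac{u'''}{u} + 3\,\frac{u''}{u}\cdot\frac{w'}{w} + 3\,\frac{u'}{u}\cdot\frac{w''}{w} + \frac{w'''}{w}.
\end{equation*}
Direct differentiation of $g_\w^s$ gives $u'/u = s\,g_\w'/g_\w$, $u''/u = s[g_\w g_\w'' - (1-s)(g_\w')^2]/g_\w^2$, and $u'''/u = s\bigl[(s-1)(s-2)(g_\w')^3 + 3(s-1)g_\w g_\w' g_\w'' + g_\w^2 g_\w'''\bigr]/g_\w^3$, whose absolute values are bounded respectively by $sC_1$, $sK_2$, and $sK_3$ by the very definitions of these constants. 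For the $v_s\circ\theta_\w$ factor, repeated application of the chain rule, combined with $|\theta_\w'(x)|\le \kappa$ (a consequence of (H5.1)), and the bounds from Theorems 3.1 and 3.2 together with the definitions of $E_2$ and $E_3$, yields
\begin{equation*}
\frac{|w'|}{w} \le M_1\kappa,\qquad \frac{|w''|}{w} \le M_2\kappa^2 + M_1 E_2,\qquad \frac{|w'''|}{w} \le \hat M_3\kappa^3 + 3 M_2 \kappa E_2 + M_1 E_3,
\end{equation*}
where $\hat M_3 := \sup\{|D^3 v_s(x)|/v_s(x): x\in \bar H\}$, which is finite since $v_s\in C^m(\bar H)$ with $m\ge 3$ and $v_s>0$.

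Substituting these into the Leibniz expansion and grouping the terms in the order $sK_3$, $3sK_2\cdot M_1\kappa$, $3sC_1\cdot(M_2\kappa^2+M_1E_2)$, $3M_2\kappa E_2 + M_1 E_3$, plus the $\hat M_3\kappa^3$ contribution from $w'''/w$, gives the pointwise bound $|D^3 f_\w|/f_\w \le S + \hat M_3\kappa^3$, where $S$ is the sum defined in the theorem statement. Taking the supremum over $x\in\bar H$ on the left-hand side of the first displayed inequality produces $\hat M_3 \le S + \hat M_3 \kappa^3$; since $\kappa<1$, this rearranges to $\hat M_3 \le S/(1-\kappa^3) = M_3$, which is exactly \eqref{3.27}. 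The one step requiring genuine care is the algebraic identification of $u'''/u$ with $s$ times the quantity inside the definition of $K_3$; the rest is bookkeeping parallel to the proof of Theorem 3.2.
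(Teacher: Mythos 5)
Your proposal is correct and follows essentially the same route as the paper: the same Leibniz decomposition of $D^3 f_{\w}$ with $f_{\w} = [g_{\w}]^s\, v_s(\theta_{\w})$, the same term-by-term bounds $sK_3$, $3sK_2M_1\kappa$, $3sC_1(M_2\kappa^2+M_1E_2)$, and $3M_2\kappa E_2 + M_1E_3 + \hat M_3\kappa^3$, and the same self-bounding inequality $\hat M_3 \le S + \hat M_3\kappa^3$ yielding \eqref{3.27}.
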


\begin{proof}
  Again set $f_{\w}(x) = [g_{\w}(x)]^s v_s(\theta_{\w}(x))$ and define $\hat
  M_3 = \sup\{|D^3v_s(x)|/v_s(x): x \in \bar H\}$.  As in the proof of
  Theorem~\ref{thm:3.2}, we find that
\begin{equation}
\label{3.28}
\frac{D^3 v_s(x)}{v_s(x)}
= \frac{\sum_{\w \in \B_{\mu}} D^3f_{\w}(x)}
{\sum_{\w \in \B_{\mu}} f_{\w}(x)}.
\end{equation}
A calculation shows that
\begin{multline*}
\frac{D^3f_{\w}(x)}{f_{\w}(x)} = \frac{D^3[g_{\w}(x)^s]}{g_{\w}(x)^s}
+ 3 \frac{D^2[g_{\w}(x)^s]}{g_{\w}(x)^s}
\frac{D[v_s(\theta_{\w}(x))]}{v_s(\theta_{\w}(x))}
\\
+ 3 \frac{D[g_{\w}(x)^s]}{g_{\w}(x)^s}
\frac{D^2[v_s(\theta_{\w}(x))]}{v_s(\theta_{\w}(x))}
+ \frac{D^3[v_s(\theta_{\w}(x))]}{v_s(\theta_{\w}(x))}.
\end{multline*}
Further tedious calculations give
\begin{gather*}
\frac{|D^3[g_{\w}(x)^s]|}{g_{\w}(x)^s} \le s K_3,
\\
3 \frac{|D^2[g_{\w}(x))^s]|}{g_{\w}(x)^s}
\frac{|D[v_s(\theta_{\w}(x))]|}{v_s(\theta_{\w}(x))} \le 3 s K_2 M_1 \kappa,
\\
3 \frac{|D[g_{\w}(x)^s]|}{g_{\w}(x)^s}
\frac{|D^2[v_s(\theta_{\w}(x))]|}{v_s(\theta_{\w}(x))} \le
3s C_1(M_2 \kappa^2 + M_1 E_2),
\\
\frac{|D^3[v_s(\theta_{\w}(x))]|}{v_s(\theta_{\w}(x))}
\le 3 M_2 \kappa E_2 + M_1 E_3 + \hat M_3 \kappa^3.
\end{gather*}
It follows that $|D^3 f_{\w}(x)| \le (S + \hat M_3 \kappa^3) f_{\w}(x)$, where
$S$ is as in the statement of Theorem~\ref{thm:3.3}.  This proves that the
absolute value of the right side of \eqref{3.28} is less than or equal to $S +
\hat M_3 \kappa^3$.  Taking the supremum of the left hand side of \eqref{3.28}
for $x \in \bar H$ gives $\hat M_3 \le S + \hat M_3 \kappa^3$, which implies
\eqref{3.27}.
\end{proof}

Theorems~\ref{thm:3.1} -- \ref{thm:3.3} are crude.  If one has more
information about the coefficients $g_{\be}(\cdot)$ and the maps
$\theta_{\be}(\cdot)$, $\be \in \B$, one can frequently obtain much sharper
results.  An example is provided by the following theorem.

\begin{thm}
\label{thm:3.4}
Assume that (H5.1) is satisfied with $m \ge 2$.
Assume also that
$\theta_{\be}^{\prime}(u) \ge 0$, $\theta_{\be}^{\prime\prime}(u) \ge 0$,
$g_{\be}^{\prime}(u) \ge 0$, $g_{\be}^{\prime\prime}(u) \ge 0$, and
\begin{equation}
\label{3.29}
g_{\be}^{\prime\prime}(u) g_{\be}(u) - (1-s)[g_{\be}^{\prime}(u)]^2
\ge 0
\end{equation}
for all $\be \in \B$, for all $u \in H$, and for a given positive real number
$s$. If $v_s$ is the strictly positive $C^m$ eigenfunction of
$\Lambda_s$, it follows that for all $u \in \bar H$
\begin{equation*}
%%\label{3.30}
v_s^{\prime}(u) \ge 0 \text{ and } v_s^{\prime\prime}(u) \ge 0.
\end{equation*}
If, in addition, there exists a set $F \subset \bar H$ (possibly empty) such
that for all $u \in \bar H \setminus F$ and all $\be \in \B$, 
$g_{\be}^{\prime}(u) > 0$ and strict inequality holds in \eqref{3.29}, then
for all $u \in \bar H \setminus F$,
\begin{equation*}
%%\label{3.31}
v_s^{\prime}(u) >  0 \text{ and } v_s^{\prime\prime}(u) >  0.
\end{equation*}
\end{thm}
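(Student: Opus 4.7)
The plan is to show that the convex cone $\mathcal{K} \subset X_m$ consisting of functions $w$ with $w \ge 0$, $w' \ge 0$, $w'' \ge 0$ is invariant under $L_s$; then, since the constant function $u \equiv 1$ lies in $\mathcal{K}$, the $C^m$-convergence $r^{-k} L_s^k(1) \to s_1 v_s$ furnished by Theorem~\ref{thm:1.1} and Remark~\ref{rem:1.2} will force $v_s \in \mathcal{K}$.

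To verify invariance of $\mathcal{K}$, I would differentiate $(L_s w)(x) = \sum_\be [g_\be(x)]^s w(\theta_\be(x))$ up to order two. The first derivative
\begin{equation*}
(L_s w)'(x) = \sum_\be \left\{ s[g_\be]^{s-1} g_\be' \, w(\theta_\be) + [g_\be]^s \, \theta_\be' \, w'(\theta_\be) \right\}
\end{equation*}
is manifestly $\ge 0$ when $w \ge 0$ and $w' \ge 0$, since $s > 0$, $g_\be > 0$, $g_\be' \ge 0$, and $\theta_\be' \ge 0$ by hypothesis. Applying $(AB)'' = A''B + 2A'B' + AB''$ with $A = [g_\be]^s$ and $B = w(\theta_\be)$, and grouping so that \eqref{3.29} appears explicitly, gives
\begin{equation*}
(L_s w)''(x) = \sum_\be \left\{ s[g_\be]^{s-2}\!\left( g_\be g_\be'' - (1-s)[g_\be']^2 \right) w(\theta_\be) + 2s[g_\be]^{s-1} g_\be' \theta_\be' w'(\theta_\be) + [g_\be]^s (\theta_\be')^2 w''(\theta_\be) + [g_\be]^s \theta_\be'' w'(\theta_\be) \right\}.
\end{equation*}
Each of the four summands is $\ge 0$ when $w \in \mathcal{K}$: the first by \eqref{3.29}, the remaining three by nonnegativity of $g_\be'$, $\theta_\be'$, $\theta_\be''$. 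Hence $L_s(\mathcal{K}) \subset \mathcal{K}$, so $L_s^k(1) \in \mathcal{K}$ for every $k \ge 0$. Since $r^{-k} L_s^k(1) \to s_1 v_s$ in the $C^m$ topology with $s_1 > 0$, the nonnegativity is preserved in the uniform limit of $D^l$ for $l = 0,1,2$, yielding $v_s' \ge 0$ and $v_s'' \ge 0$ on $\bar H$ (and $v_s > 0$ is already furnished by Theorem~\ref{thm:1.1}).

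For the strict inequalities on $\bar H \setminus F$, I would apply the two identities above with $w = v_s$ directly to the eigenfunction equation $L_s v_s = \lambda_s v_s$. At any $u \in \bar H \setminus F$, the summand $s[g_\be(u)]^{s-1} g_\be'(u) v_s(\theta_\be(u))$ of $\lambda_s v_s'(u)$ is strictly positive for every $\be$ (since $v_s > 0$ everywhere on $\bar H$, $g_\be > 0$, and $g_\be'(u) > 0$), while the remaining summands are $\ge 0$ from the non-strict part; hence $v_s'(u) > 0$. Similarly, in $\lambda_s v_s''(u)$ the coefficient $s[g_\be(u)]^{s-2}\!\left( g_\be g_\be'' - (1-s)[g_\be']^2 \right)(u)$ is strictly positive by the strict form of \eqref{3.29} and multiplies $v_s(\theta_\be(u)) > 0$, while the other three summands remain $\ge 0$; so $v_s''(u) > 0$. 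I expect no serious obstacle: the argument is entirely based on invariance of a closed convex cone together with the $C^m$ convergence already provided by Theorem~\ref{thm:1.1}, and the only algebraic care needed is to write the $w(\theta_\be)$-coefficient of $(L_s w)''$ in the grouped form above so that \eqref{3.29} can be invoked directly.
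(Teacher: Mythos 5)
Your proposal is correct, and it takes a route that differs in execution from the paper's, although both ultimately rest on the power-iteration convergence supplied by Theorem~\ref{thm:1.1} and Remark~\ref{rem:1.2}. The paper does not argue via cone invariance of the one-step operator; instead it works directly with the iterated weights $g_{\w}$ for words $\w \in \B_{\nu}$, proving by an explicit expansion (equations \eqref{3.32}--\eqref{3.36}) that $D[g_{\w}(x)^s] \ge 0$ and $D^2[g_{\w}(x)^s] \ge 0$ term by term over the letters of $\w$, and then invokes the ratio limit \eqref{1.14} with $u \equiv 1$. Your argument replaces that multi-letter expansion by the single computation of $(L_s w)''$ for one term $[g_{\be}]^s\, w(\theta_{\be})$, and lets the induction be absorbed into the inclusion $L_s(\mathcal{K}) \subset \mathcal{K}$; this is shorter and avoids the bookkeeping with the compositions $\xi_k$. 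The difference is most pronounced in the strict-inequality part: the paper extracts a uniform positive lower bound $\delta_1(x)$, resp.\ $\delta_2(x;s)$, on $D[g_{\w}(x)^s]/g_{\w}(x)^s$, resp.\ $D^2[g_{\w}(x)^s]/g_{\w}(x)^s$, valid for all $\w$ (by retaining only the $j=0$ term of the expansion), and then passes to the limit in \eqref{1.14}; you instead differentiate the eigenvalue equation $L_s v_s = \lambda_s v_s$ once and twice and isolate the strictly positive summand, using the already-established nonnegativity of $v_s'$ and $v_s''$ to discard the rest. Your version of the strict part is cleaner because it requires no limit at all. Two minor points worth making explicit in a write-up: the hypotheses are stated for $u \in H$ but extend to $\bar H$ by continuity of the derivatives in $C^m(\bar H)$; and the cone $\mathcal{K}$ is closed under the $C^2$ (hence $C^m$, $m \ge 2$) convergence in \eqref{1.10}, which is what lets the inequalities pass to the limit $s_1 v_s$ with $s_1 > 0$.
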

\begin{proof}
  For $\nu \ge 1$, let $\w =(b_1, b_2, \cdots,, b_{\nu})$ denote a fixed
  element of $\B_{\nu}$ and for $0 \le k \le \nu$, define $\xi_0(x) = x$,
  $\xi_1(x) = \theta_{b_1}(x)$ and generally $\xi_k(x) = (\theta_{b_k} \circ
    \theta_{b_{k-1}} \circ \cdots \circ \theta_{b_1}(x))$.  We leave to the
      reader the simple proof that $\xi_k^{\prime}(x) \ge 0$ and
      $\xi_k^{\prime\prime}(x) \ge 0$ for all $x \in \bar H$ and $0 \le k \le
      \nu$.  Using \eqref{3.8}, a straightforward calculation yields
\begin{equation}
\label{3.32}
\frac{D [g_{\w}(x)^s]}{ g_{\w}(x)^s}
 = s \frac{g_{\w}^{\prime}(x)}{g_{\w}(x)}
= s \sum_{k=0}^{\nu-1}
\frac{g_{b_{k+1}}^{\prime}(\xi_k(x)) \xi_k^{\prime}(x)}
{g_{b_{k+1}}(\xi_k(x))} \ge s \frac{g_{b_1}^{\prime}(x)}{g_{b_1}(x)} \ge 0.
\end{equation}
Using \eqref{1.14} and taking the limit as $\nu \rightarrow \infty$, we
conclude that $v_s^{\prime}(x)/v_s(x) \ge 0$ for all $x \in \bar H$. If, in
addition, there exists a set $F$ as in the statement of Theorem~\ref{thm:3.4}
and if $x \notin F$, it follows that
\begin{equation*}
\inf\Big\{s \frac{g_{\be}^{\prime}(x)}{g_{\be}(x)} : \be \in \B \Big\} := s
\delta_1(x) >0,
\end{equation*}
so \eqref{3.32} then implies that
\begin{equation*}
\frac{D[g_{\w}(x)^s]}{g_{\w}(x)^s} \ge s \delta_1(x).
\end{equation*}
Again using \eqref{1.14} and letting $\nu \rightarrow \infty$, we conclude
that $v_s^{\prime}(x) \ge s  \delta_1(x) >0$ for all $x \in \bar H \setminus F$.

For $\w = (b_1, b_2, \cdots, b_{\nu}) \in \B_{\nu}$, we obtain from
\eqref{3.32} that
\begin{equation*}
D [g_{\w}(x)^s] = s g_{\w}(x)^s \sum_{j=0}^{\nu-1}
\frac{g_{b_{j+1}}^{\prime}(\xi_j(x)) \xi_j^{\prime}(x)}
{g_{b_{j+1}}(\xi_j(x))} := s g_{\w}(x)^s \sum_{j=0}^{\nu-1} T_j(x)
\end{equation*}
and $D[g_{\w}(x)] = g_{\w}(x)  \sum_{j=0}^{\nu-1} T_j(x)$.  A calculation
now gives
\begin{multline}
\label{3.33}
D^2[g_{\w}(x)^s]
= s D\Big[g_{\w}(x)^s \sum_{j=0}^{\nu-1} T_j(x)\Big]
\\
= s \Big[s g_{\w}(x)^s \Big(\sum_{j=0}^{\nu-1} T_j(x)\Big)^2
+ g_{\w}(x)^s \sum_{j=0}^{\nu-1} D(T_j(x))\Big].
\end{multline}
Because $T_j(x) \ge 0$ for all $x \in \bar H$ and $1 \le j \le \nu-1$,
\begin{equation*}
%%\label{3.34}
\Big(\sum_{j=0}^{\nu-1} T_j(x)\Big)^2 \ge \sum_{j=0}^{\nu-1} (T_j(x))^2
= \sum_{j=0}^{\nu-1} \frac{\big[g_{b_{j+1}}^{\prime}(\xi_j(x))\big]^2 
(\xi_j^{\prime}(x))^2} {[g_{b_{j+1}}(\xi_j(x))]^2}.
\end{equation*}
A calculation gives
\begin{multline}
\label{3.35}
\sum_{j=0}^{\nu-1} D(T_j(x))
= \sum_{j=0}^{\nu-1} \frac{\Big[g_{b_{j+1}}^{\prime\prime}(\xi_j(x))
  (\xi_j^{\prime}(x))^2
+ g_{b_{j+1}}^{\prime}(\xi_j(x))   \xi_j^{\prime\prime}(x)\Big]g_{b_{j+1}}(\xi_j(x))}
{[g_{b_{j+1}}(\xi_j(x))]^2}
\\
-  \sum_{j=0}^{\nu-1} [T_j(x)]^2.
\end{multline}
Combining \eqref{3.33} -- \eqref{3.35} and noticing that all terms in the
summation are nonnegative, we find that
\begin{multline}
\label{3.36}
D^2[g_{\w}(x)^s] \ge s [g_{\w}(x)]^s
\sum_{j=0}^{\nu-1} \Big(g_{b_{j+1}}^{\prime\prime}(\xi_j(x))
g_{b_{j+1}}(\xi_j(x)) - (1-s) [g_{b_{j+1}}^{\prime}(\xi_j(x))]^2 \Big)
\\
\cdot (\xi_j^{\prime}(x))^2 \big[g_{b_{j+1}}(\xi_j(x))\big]^{-2}.
\end{multline}

Since we assume that $g_b^{\prime\prime}(u) g_b(u) - (1-s) [g_b^{\prime}(u)]^2
\ge 0$ for all $u \in \bar H$ and $b \in \B$, we find that for all $\w \in
\B_{\nu}$, $x \in \bar H$,
%%\begin{equation*}
%%\label{3.37} 
$D^2[g_{\w}(x)^s]  \ge 0$.
%%\end{equation*}
Letting $\nu \rightarrow \infty$ and using \eqref{1.14}, we derive
that $v_s^{\prime\prime}(x) \ge 0$ for all $x \in \bar H$.

If a set $F \subset H$ exists such that strict inequality holds
in \eqref{3.29} for all $b \in \B$ and all $x \in \bar H \setminus F$,
then by only taking the term $j=0$ in the summation in \eqref{3.36},
we find that there is a number $\delta_2(x;s)>0$ for 
$x \in \bar H \setminus F$ and $s >0$ such that
\begin{equation*}
\frac{D^2[g_{\w}(x)^s]}{g_{\w}(x)^s} \ge \delta_2(x;s).
\end{equation*}
Again, using \eqref{1.14} and letting $\nu \rightarrow \infty$, this implies
that for $x \in \bar H \setminus F$,
\begin{equation*}
\frac{v_s^{\prime\prime}(x)}{v_s(x)} \ge \delta_2(x;s) >0,
\end{equation*}
which completes the proof.
\end{proof}

\begin{remark}
\label{rem:3.2}
An examination of the proof of Theorem~\ref{thm:3.4} shows that we have
proved that for all $x \in \bar H$, for all $\nu \ge 1$, and for all
$\w \in B_{\nu}$, $\theta_{\w}^{\prime}(x) \ge 0$,  
$\theta_{\w}^{\prime\prime}(x) \ge 0$, $g_{\w}^{\prime}(x) \ge 0$, 
$g_{\w}^{\prime\prime}(x) \ge 0$, and $D^2[g_{\w}(x)^s] \ge 0$. Because
\begin{equation*}
D^2[g_{\w}(x)^s] = s g_{\w}(x)^{s-2}\Big(g_{\w}^{\prime\prime}(x) g_{\w}(x)
- (1-s) [g_{\w}^{\prime}(x)]^2\Big),
\end{equation*}
we also see that
$g_{\w}^{\prime\prime}(x) g_{\w}(x) - (1-s) [g_{\w}^{\prime}(x)]^2 \ge 0$ for
all $\w \in B_{\nu}$, all $\nu \ge 1$, and all $x \in \bar H$. If the
constants $C_1$, $C_2$, $M_1$, $\kappa$, and $E_2$ are defined as above
in this section, one obtains immediately that for all $x \in \bar H$,
\begin{equation*}
%%\label{3.38}
0 \le \frac{v_s^{\prime}(x)}{v_s(x)} \le \frac{C_1 s}{1- \kappa}.
\end{equation*}
An examination of the proof of Theorem~\ref{thm:3.2} yields the following
refinement of \eqref{3.18}-\eqref{3.19}.
\begin{equation}
\label{3.39}
0 \le \frac{v_s^{\prime\prime}(x)}{v_s(x)} 
\le  \Big[ s G_2 + 2 s^2 C_1^2 \frac{\kappa}
{1- \kappa} + s C_1 E_2 \frac{1}{1- \kappa}\Big]
\Big[ \frac{1}{1- \kappa^2}\Big],
\end{equation}
where
\begin{equation}
\label{3.40}
G_2 = \max\Big\{
\frac{g_{\w}^{\prime\prime}(x) g_{\w}(x)
- (1-s) [g_{\w}^{\prime}(x)]^2}{g_{\w}(x)^2}: \w \in \B_{\mu}, x \in \bar H
\Big\}.
\end{equation}
\end{remark}

{\bf Example:} To illustrate the methods of this section, we consider a simple
example which nevertheless has some interest because of a failure of
smoothness which makes techniques in \cite{Jenkinson-Pollicott} inapplicable.
We shall always assume that $0 \le \alam \le 1$ and define
\begin{equation*}
\theta_1(x) = \frac{1}{3 + 2 \alam}(x + \alam x^{7/2}), \qquad
\theta_2(x) = \theta_1(x) + \frac{2 + \alam}{3 + 2 \alam},
\end{equation*}
so $\theta_j:[0,1] \to [0,1]$, $\theta_1(0) =0$, and $\theta_2(1) = 1$.  For
simplicity we suppress the dependence of $\theta_j(x)$ on $\alam$ in our
notation. If $\B=\{1,2\}$ and $\alam >0$ and $\w = (j_1, j_2, \ldots, j_{\nu})
\in \B_{\nu}$, notice that $D^3 \theta_{\w}(x)$ is defined and H\"older
continuous for all $x \in [0,1]$; but if $j_1 =1$, $D^4 \theta_{\w}(x)$ is not
defined at $x=0$.  Using that $0 \le \alam \le 1$, one can check that $0 <
\theta_j^{\prime}(x) <1$ for $0 \le x \le 1$; and it follows that there exists
a unique compact, nonempty set $J_{\alam} \subset [0,1]$ such that
%%\begin{equation*}
$J_{\alam} = \theta_1(J_{\alam}) \cup \theta_2(J_{\alam})$.
%%\end{equation*}
Note that $J_0$ is the {\it middle thirds} Cantor set.

For $\alam \in [0,1]$ fixed, and $0 < s$, let
$X = C^2[0,1]$ and $Y = C[0,1]$, and define 
\begin{equation*}
g_1(x):= g_2(x):= g(x) := \theta_1^{\prime}(x)
= \frac{1}{3+2 \alam}(1 + \tfrac{7}{2}
\alam x^{5/2}).
\end{equation*}
As in Section~\ref{sec:intro}, define $\Lambda_s:X \to X$ and
$L_s:Y \to Y$ by the same formula:
\begin{equation}
\label{3.41} 
(\Lambda_s(w))(x) = g(x)^s[w(\theta_1(x)) + w(\theta_2(x))].
\end{equation}
Theorem~\ref{thm:1.1} implies that $r(L_s) = r(\Lambda_s)$; and it follows, for
example, from theorems in \cite{N-P-L} that the Hausdorff dimension of
$J_{\alam}$ is the unique value of $s$, $0 < s \le 1$, for which
$r(\Lambda_s) =1$.

If $w \in Y$ is a nonnegative function, we have that
\begin{equation*}
(L_s(w))(x) \ge \Big(\frac{1}{3+2 \alam}\Big)^s
[w(\theta_1(x)) + w(\theta_2(x))] \ge \Big(\frac{1}{5}\Big)^s
[w(\theta_1(x)) + w(\theta_2(x))] .
\end{equation*}
If $u(x) =1$ for $0 \le x \le 1$, it follows that
\begin{equation*}
L_s(u) \ge \Big(\frac{1}{5}\Big)^s (2u),
\end{equation*}
which implies that $r(L_s) \ge 2 (1/5)^s$.  If $\log$ denotes the natural
logarithm and $0 \le s < \log(2)/\log(5) \approx .4307$, it follows that
$r(L_s) >1$.  Thus if one is only interested in
$s$ with $r(L_s) \le 1$, one may restrict attention to $s \ge
\log(2)/\log(5)$.

In order to apply Theorem~\ref{thm:3.4}, we must
determine a range of $s >0$ such that
\begin{equation}
\label{3.42}
g^{\prime\prime}(x) g(x) - (1-s) [g^{\prime}(x)]^2 >0, \qquad 0 < x \le 1.
\end{equation}
The other hypotheses of Theorem~\ref{thm:3.4} can be
trivially verified. A calculation gives, for $0 < x \le 1$ that
\begin{equation*}
g^{\prime\prime}(x) g(x) - (1-s)[g^{\prime}(x)]^2 
= \frac{1}{(3+2a)^2}(\tfrac{7\alam }{2})(\tfrac{5}{2}) x^{1/2}
\Big[(\tfrac{3}{2}) 
 +(\tfrac{7 \alam}{2}) (-1 + \tfrac{5}{2}s) x^{5/2} \Big].
\end{equation*}
Assuming that $a >0$ and noting that $0 < u :=x^{5/2} \le 1$ if and only
if $0 < x \le 1$, we see that \eqref{3.42} is satisfied if and only if
\begin{equation*}
(\tfrac{3}{2})  +(\tfrac{7 \alam}{2}) (-1 + \tfrac{5}{2}s) u > 0 \qquad
\text{for} \qquad 0 \le u \le 1,
\end{equation*}
which is equivalent to
\begin{equation*}
(\tfrac{3}{2})  +(\tfrac{7 \alam}{2}) (-1 + \tfrac{5}{2}s) > 0.
\end{equation*}
The latter inequality is certainly satisfied for $0 < a \le 3/7$ and $s >0$;
and if $3/7 \le a \le 1$, the inequality is satisfied for $s > \tfrac{2}{5}
[1 - 3/(7a)]$.  Thus we conclude that for $0 \le a \le 1$, $0<x\le 1$, and $s
>0$, \eqref{3.42} is satisfied if and only if
\begin{equation}
\label{3.43}
s > \tfrac{2}{5}  [1 - 3/(7a)].
\end{equation}
It follows from Theorem~\ref{thm:3.4} that if $0 < a \le 1$, $s >0$ and
\eqref{3.43} is satisfied, then $v_s^{\prime}(x) >0$ and
$v_s^{\prime\prime}(x) >0$ for $0 < x \le 1$.

It remains to apply Theorems~\ref{thm:3.1} and \ref{thm:3.2} to our example.
We assume, in the notation of (H5.1) that $\mu =1$ and $m \ge 2$.  The
eigenfunction $v_s(\cdot)$ for \eqref{3.41} depends on the parameter $\alam$,
although this is not indicated in our notation, and, of course our
various constants depend on $\alam$.  Since 
$\theta_j^{\prime}(x) = g(x)$, the constant $\kappa = \kappa(\alam)$ in (H5.1)
is given by
\begin{equation}
\label{3.44}
\kappa(\alam) = \max \{g(x): 0 \le x \le 1\} = (2+7 \alam)/(6+ 4 \alam)
<1.
\end{equation}

The constant $C_1 = C_1(\alam)$ in Theorem~\ref{thm:3.1} is defined by
\begin{align*}
C_1 = C_1(\alam) &= \sup\{[(\tfrac{7 \alam}{2})(\tfrac{5}{2}) x^{3/2})]
[1 + (\tfrac{7\alam}{2}) x^{5/2})]^{-1}: 0 \le x \le 1\}
\\
&= \sup\{[(\tfrac{7 \alam}{2})(\tfrac{5}{2}) u^3]
[1 + (\tfrac{7 \alam}{2}) u^5)]^{-1}: 0 \le u \le 1\}.
\end{align*}
An elementary but tedious calculus argument, which we leave to the reader,
yields
\begin{equation}
\label{3.45}
C_1(\alam) = \begin{cases}
(\frac{7 \alam}{2})(\frac{5}{2})[1 + (\frac{7}{2}) \alam]^{-1}, 
& 0 < \alam \le \tfrac{3}{7}
\\
(\frac{7 \alam}{2})(\frac{3}{7 \alam})^{3/5}, & \tfrac{3}{7} 
\le \alam \le 1.
\end{cases}
\end{equation}
It follows from Theorems~\ref{thm:3.1}
and \eqref{1.8} that for $0 < x \le 1$,
\begin{equation*}
%%\label{3.46}
0 < \frac{v_s^{\prime}(x)}{v_s(x)} \le s C_1(\alam) [1- \kappa(\alam)]^{-1}
= s C_1(\alam) (6 + 4 \alam)/(4 - 3 \alam):= M_1(\alam).
\end{equation*}
An easy calculation also yields that
\begin{equation}
\label{3.47}
\max\{ \theta_1^{\prime\prime}(x) : 0 \le x \le 1\}
= (\tfrac{7 \alam}{2}) (\tfrac{5}{6 + 4 \alam}):= E_2(\alam).
\end{equation}

By definition (see \eqref{3.15} with $\mu =1$) we have that
\begin{align*}
C_2:= C_2(\alam) &= \sup\{g^{\prime\prime}(x)/g(x): 0 \le x \le 1\}
\\
&= \tfrac{15}{4} \sup\{(\tfrac{7 \alam}{2}) x^{1/2})
[1 + (\tfrac{7 \alam}{2}) x^{5/2})]^{-1}: 0 \le x \le 1\}
\\
&= (\tfrac{15}{4}) \sup\{(\tfrac{7\alam}{2})u 
[1 + (\tfrac{7\alam}{2})u^5]^{-1}: 0 \le u \le 1\},
\end{align*}
and a simple calculus exercise yields
\begin{equation}
\label{3.48}
C_2(\alam) = \begin{cases}
(\frac{15}{4})(\frac{7 \alam}{2})[1 + (\frac{7 \alam}{2})]^{-1}, 
& 0 < \alam \le \tfrac{1}{14}
\\
3 (\frac{1}{4})^{1/5} (\frac{7 \alam}{2})^{4/5}, & \tfrac{1}{14}
\le \alam \le 1.
\end{cases}
\end{equation}

Using \eqref{3.39} and \eqref{3.40}, we now find that for
$8/35 < s \le 1$, $0 < \alam \le 1$, and $0 < x \le 1$, we have
\begin{equation*}
%%\label{3.49}
0 < \frac{v_s^{\prime\prime}(x)}{v_s(x)} 
%\le \big[s G_2(\alam) + 2 s C_1(\alam) M_1(\alam)
%\kappa(\alam) + M_1(\alam) E_2(\alam)\big] \big[1 - \kappa(\alam)^2\big]^{-1}
\\
\le  \Big[s G_2(\alam) +\frac{ 2 s^2 C_1(\alam)^2 \kappa(\alam)}
{1 - \kappa(\alam)} + \frac{ s C_1(\alam) E_2(\alam)}{1 - \kappa(\alam)}\Big]
\big[1 - \kappa(\alam)^2\big]^{-1},
\end{equation*}
where $\kappa(\alam)$, $C_1(\alam)$, $E_2(\alam)$, and $C_2(\alam)$ are given
by \eqref{3.44}, \eqref{3.45}, \eqref{3.47}, and \eqref{3.48}, respectively,
and $G_2(\alam)$ is given by
\begin{equation}
\label{3.50}
G_2(\alam) = \max_{0 \le x \le 1} \frac{g^{\prime\prime}(x) g(x) 
- (1-s) [g^{\prime}(x)]^2}{g(x)^2} < C_2(\alam).
\end{equation}
Equation \eqref{3.43} implies that $G_2(\alam) >0$ for $0 < \alam \le 1$
if we assume that $8/35 <s \le 1$.

%\begin{remark}
%\label{rem:3.7}
%If $\alam > 4/3$, one has $\theta^{\prime}(x) >1$.  However, for
%$\alam > 4/3$, there exists a unique $x = x_*(\alam)$, 
%$(2+\alam)/(3 + 2 \alam) < x <1$, such that $\theta_2(x) = x$.
%Furthermore, $0 \le \theta_j^{\prime}(x) <1$ for $0 \le x \le x_*(\alam)$
%and $\theta_j([0, x_*(\alam)]) \subset [0, x_*(\alam)]$ for $j=0,1$.
%It follows that there exists a unique, compact, nonempty set $J_{\alam}$.
%$J_{\alam} \subset [0, x_*(\alam)]$ such that
%\begin{equation*}
%J_{\alam}= \theta_1(J_{\alam}) \cup \theta_2(J_{\alam}).
%\end{equation*}
%If, for $\alam > 4/3$, we define $X_{\alam} = C^2([0, x_*(\alam)])$
%and $Y_{\alam} = C([0, x_*(\alam)])$, and if $\Lambda_s : X_{\alam}
%\to X_{\alam}$ and $L_s : Y_{\alam} \to Y_{\alam}$ are defined
%by \eqref{3.43}, then $r(\Lambda_s) = r(L_s)$ and the Hausdorff dimension
%of $J_{\alam}$ is the unique value of $s$, $0 < s \le 1$, for which
%$ r(L_s) =1$.  Thus a complete treatment of Example 3.6 would involve
%considering all $\alam >0$, $\alam \neq 4/3$.  For reasons of length,
%we have not considered the case $\alam > 4/3$.
%\end{remark}

\section{Estimates for derivatives of $v_s$: The Case of M\"obius
  Transformations}
\label{sec:mobius}

When the maps $\theta_b$, $b \in \B$ are M\"obius transformations, one
can obtain much sharper estimates for $\max\{D^k v_s(x)/v_s(x): x \in \bar
H\}$ than were available in Section~\ref{sec:1d-deriv}.

We shall be interested in the one dimensional case, and our maps will
eventually be of the form $\theta_b(x):= 1/(x+b)$, where $b >0$.  The special
case where $\B$ is a subset of the positive integers has been of great
interest because of connections with continued fractions.  See, for example,
\cite{Bourgain-Kontorovich}, \cite{Bumby1}, \cite{Bumby2}, \cite{Cusick1},
\cite{Cusick2}, \cite{R}, \cite{Good}, \cite{Hensley1}, \cite{Hensley3}, and
\cite{Hensley2}. However, for our immediate purposes, nothing is gained by
restricting to $\B \subset \N$.

\begin{lem}
\label{lem:6.1}
Let $\B$ denote a finite collection of complex numbers $b$ such that
$\Re(b) \ge \gamma >0$ for all $b \in \B$.  For $b \in \B$, define
$M_b =
\bigl(\begin{smallmatrix} 0 & 1 \\
  1 & b \end{smallmatrix}\bigr)$ and $\theta_{b}(z) = 1/(z+b)$ for
$\Re(z) \ge 0$.  Let $b_j$, $j \ge 1$, denote a sequence of elements of $\B$.
Then for $n \ge 1$, we have 
\begin{equation}
\label{6.0}
M_{b_1} M_{b_2} \cdots M_{b_n} = \begin{pmatrix} A_{n-1} & A_n \\ B_{n-1} & B_n
\end{pmatrix},
\end{equation}
and
\begin{equation*}
M_{b_n} M_{b_{n-1}} \cdots M_{b_1} = \begin{pmatrix} A_{n-1} & B_{n-1} \\ A_n & B_n
\end{pmatrix},
\end{equation*}
where $A_0 =0 $, $A_1 =1$, $B_0 =1$, $B_1 = b_1$ and for $n \ge 2$,
\begin{equation}
\label{6.1}
A_{n+1} = A_{n-1} + b_{n+1} A_n \text{ and }
B_{n+1} = B_{n-1} + b_{n+1} B_n.
\end{equation}
If $G:= \{z \in \C: \Re(z) \ge 0\}$ and $D_{\gamma^{-1}}: = \{z \in \C:
|z- \gamma^{-1}| \le \gamma^{-1}\}$, then for all $b \in \B$,
\begin{equation}
\label{6.2}
\theta_b(G) \subset D_{\gamma^{-1}}.
\end{equation}
Also, we have for all $z \in G$, $(\theta_{b_1} \circ \theta_{b_{2}} \circ
\cdots \circ \theta_{b_n})(z) \in D_{\gamma^{-1}}$ and
\begin{equation}
\label{6.3}
(\theta_{b_1} \circ \theta_{b_{2}} \circ \cdots \circ \theta_{b_n})(z)
= (A_{n-1} z + A_n)/(B_{n-1} z + B_n),
\end{equation}
and 
\begin{equation}
\label{6.4}
(\theta_{b_n} \circ \theta_{b_{n-1}} \circ \cdots \circ \theta_{b_1})(z)
= (A_{n-1} z + B_{n-1})/(A_n z + B_n).
\end{equation}
For all $n \ge 0$, $B_n \neq 0$ and $\Re(B_{n+1}/B_{n}) \ge \gamma$,
while for all $n \ge 1$, $A_n \neq 0$ and $\Re(A_{n+1}/A_{n}) \ge \gamma$.
For all $b, c \in \B$, $\theta_b \circ \theta_c |_G$ is a Lipschitz map
(with respect to the Euclidean norm on $\C$) and
\begin{equation}
\label{6.5}
|\theta_b(\theta_c(z)) - \theta_b(\theta_c(w))| \le \frac{1}{4 \gamma^2}
|z-w|, \qquad \forall z,w \in G.
\end{equation}
%\begin{equation}
%\label{4.15}
%\Big|\frac{d}{dz}\Big[\frac{A_{n-1} z + A_n}{B_{n-1} z + B_n}\Big]\Big|^s
%= |B_{n-1}|^{-2s} |z + B_n/B_{n-1}|^{-2s}.
%\end{equation}
\end{lem}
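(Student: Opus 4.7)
\textbf{Proof proposal for Lemma~\ref{lem:6.1}.}
The plan is to reduce every assertion to the standard correspondence between Möbius transformations on $\C$ and $2\times 2$ complex matrices, then to extract the remaining metric statements from the resulting explicit formulas. Since $\theta_b(z) = (0\cdot z+1)/(1\cdot z+b)$, the matrix $M_b$ is precisely the one associated with $\theta_b$, and composition of Möbius maps corresponds to matrix multiplication in the usual order. I would first prove the matrix identity \eqref{6.0} by induction on $n$: the base case $n=1$ is immediate from the definitions of $A_0,A_1,B_0,B_1$, and the inductive step follows by right-multiplying $\bigl(\begin{smallmatrix}A_{n-1}&A_n\\B_{n-1}&B_n\end{smallmatrix}\bigr)$ by $M_{b_{n+1}}$ and reading off the recurrences \eqref{6.1}. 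Since each $M_b$ is symmetric, transposing \eqref{6.0} (and reversing the product) yields the second matrix formula, and the composition formulas \eqref{6.3} and \eqref{6.4} then follow by applying the associated Möbius transformation to $z$.

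For the disk inclusion \eqref{6.2}, I would use the classical image of a right half-plane under inversion: if $\Re(w)\ge \gamma$, then $1/w$ lies in the closed disk $\{v : |v-1/(2\gamma)|\le 1/(2\gamma)\}$, which is contained in $D_{\gamma^{-1}}$. Applied to $w=z+b$ with $z\in G$ and $\Re(b)\ge\gamma$, this gives $\theta_b(z)\in D_{\gamma^{-1}}$; iterating along compositions (whose images under each $\theta_{b_j}$ lie in $D_{\gamma^{-1}}\subset G$) then proves the remaining containment statement.

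Next I would establish the statements about $A_n$ and $B_n$ by a short induction. Dividing \eqref{6.1} by $B_n$ gives $B_{n+1}/B_n = B_{n-1}/B_n + b_{n+1}$. By the inductive hypothesis $\Re(B_n/B_{n-1})\ge\gamma>0$, so $B_{n-1}/B_n = 1/(B_n/B_{n-1})$ has positive real part (because $\Re(1/w)=\Re(w)/|w|^2$), and adding $b_{n+1}$ yields $\Re(B_{n+1}/B_n)\ge\gamma$; in particular $B_{n+1}\ne 0$. The argument for $A_n$ is identical starting from $A_1=1$, $A_2=b_2$.

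The main obstacle is the Lipschitz estimate \eqref{6.5}, because the naive bound $|\theta_b'|,|\theta_c'|\le 1/\gamma^2$ only gives $1/\gamma^4$, losing a factor of $4$. Here I would use the explicit composition formula to compute
\begin{equation*}
(\theta_b\circ\theta_c)'(z) \;=\; \frac{1}{\bigl(b(z+c)+1\bigr)^2},
\end{equation*}
and then bound the denominator by factoring out $b$: since $\Re(z)\ge 0$, $\Re(c)\ge\gamma$, and $\Re(1/b)=\Re(b)/|b|^2\ge\gamma/|b|^2$, one has
\begin{equation*}
|b(z+c)+1| \;=\; |b|\cdot\bigl|z+c+1/b\bigr| \;\ge\; |b|\Bigl(\gamma+\tfrac{\gamma}{|b|^2}\Bigr) \;=\; \gamma\bigl(|b|+1/|b|\bigr)\;\ge\;2\gamma
\end{equation*}
by the AM--GM inequality. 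This gives $|(\theta_b\circ\theta_c)'(z)|\le 1/(4\gamma^2)$ on $G$. Since $G$ is convex, I would conclude by integrating this derivative bound along the straight segment from $w$ to $z$ inside $G$ to obtain \eqref{6.5}.
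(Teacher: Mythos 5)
Your proposal is correct, and for everything except the Lipschitz bound it follows the paper's proof essentially verbatim: induction plus transposition for \eqref{6.0}, the image of the half-plane $\{\Re(w)\ge\gamma\}$ under inversion for \eqref{6.2}, and the same two inductions (using $\Re(1/\beta)=\Re(\beta)/|\beta|^2$) for the nonvanishing of $A_n,B_n$ and the lower bounds on $\Re(B_{n+1}/B_n)$ and $\Re(A_{n+1}/A_n)$.

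The one place you genuinely diverge is \eqref{6.5}. The paper obtains it by first deriving the general formula $\frac{d}{dz}(\theta_{b_1}\circ\cdots\circ\theta_{b_n})(z)=(-1)^n(B_{n-1}z+B_n)^{-2}$ from $\det(M_{b_1}\cdots M_{b_n})=(-1)^n$, then proving the lower bound $|B_{n-1}z+B_n|^2\ge 4\gamma^2|B_{n-2}|^2$ for all $n\ge 2$ via the recursion for $B_n/B_{n-1}$, and finally specializing to $n=2$ where $B_0=1$. Your route computes $(\theta_b\circ\theta_c)'(z)=(1+b(z+c))^{-2}$ directly and gets $|1+b(z+c)|\ge\gamma(|b|+|b|^{-1})\ge 2\gamma$ by factoring out $b$ and applying AM--GM; this is a clean, self-contained derivation of exactly the statement \eqref{6.5}, and the integration along the segment in the convex set $G$ is legitimate since the poles of $\theta_b\circ\theta_c$ lie in $\{\Re(z)\le-\gamma\}$. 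What the paper's longer argument buys is the uniform $n$-fold contraction estimate (its inequality for general $n$, with constant $(4\gamma^2|B_{n-2}|^2)^{-1}$), which is reused in the proof of Lemma~\ref{lem:6.2}; your argument would have to be redone or generalized there. As a proof of Lemma~\ref{lem:6.1} alone, however, your version is complete and arguably more economical.
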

\begin{proof}
\begin{equation*}
M_{b_1} =  \begin{pmatrix} 0 & 1 \\
  1 & b_1 \end{pmatrix}
= \begin{pmatrix} A_0 & A_1 \\
  B_0 & B_1 \end{pmatrix}.
\end{equation*}
We argue by induction and assume that \eqref{6.0} is satisfied for some $n
  \ge 1$. Then we obtain
\begin{multline*}
M_{b_1} M_{b_2} \cdots M_{b_n} M_{b_{n+1}} 
= \begin{pmatrix} A_{n-1} & A_n \\ B_{n-1} & B_n
\end{pmatrix} \begin{pmatrix} 0 & 1 \\ 1 & b_{n+1} \end{pmatrix}
\\
= \begin{pmatrix} A_n & A_{n-1} + b_{n+1} A_n \\
B_n & B_{n-1} + b_{n+1} B_n \end{pmatrix}
= \begin{pmatrix} A_n & A_{n+1}\\
B_n & B_{n+1} \end{pmatrix},
\end{multline*}
which completes the inductive proof.  The formula for 
$M_{b_n} M_{b_{n-1}} \cdots M_{b_1}$ follows by taking the transpose of the
formula for $M_{b_1} M_{b_2} \cdots M_{b_n}$.

Equations \eqref{6.3} and \eqref{6.4} are now standard results for
 M\"obius transformations.  If $z \in G$, $z + b \in \{w: \Re(w) \ge
 \gamma\}$. A standard exercise shows that the map $w \mapsto 1/w$ takes
the set $\{w: \Re(w) \ge  \gamma\}$ into $D_{\gamma^{-1}}$, and this
establishes \eqref{6.2}.

Notice that $B_0=1$ and $B_1 = b_1$ so $B_0$ and $B_1$ are nonzero and
$\Re(B_1/B_0) \ge \gamma$.  We argue by induction and assume that we have
proved $B_j \neq 0$ for $0 \le j \le n$ and $\Re(B_{j+1}/B_j) \ge \gamma$
for $0 \le j \le n-1$. We then obtain that
\begin{equation*}
\Re(B_{n+1}/B_n) = \Re(B_{n-1}/B_n) + \Re(b_{n+1}) \ge \Re(B_{n-1}/B_n) + \gamma.
\end{equation*}
Writing $\beta= B_n/B_{n-1}$, so $\Re(\beta) \ge \gamma$,
we see that
\begin{equation*}
\Re(B_{n-1}/B_n) = \Re(1/\beta) = \Re(\bar \beta/|\beta|^2) 
\ge \gamma/|\beta|^2,
\end{equation*}
so
\begin{equation*}
\Re(B_{n+1}/B_n)  = \gamma( 1 + |B_{n-1}/B_n|^2) > \gamma
\end{equation*}
and $B_{n+1} \neq 0$.

The proof that $A_n \neq 0$ for all $n \ge 1$ and $\Re(A_{n+1}/A_n) \ge
\gamma$ for all $n \ge 1$ follows by a similar induction argument and is
left to the reader.

Notice that $\det(M_{b_1} M_{b_2} \cdots M_{b_n}) = (-1)^n$, so
\begin{equation}
\label{6.6}
\frac{d}{dz}(\theta_{b_1} \circ \theta_{b_{2}} \circ \cdots \circ \theta_{b_n})(z)
= \frac{(-1)^n}{(B_{n-1} z + B_n)^2}
= \frac{(-1)^n}{B_{n-1}^2(z + B_n/B_{n-1})^2} .
\end{equation}
If we can prove that $|B_{n-1}^2(z + B_n/B_{n-1})^2| \ge L$ for all
$z \in G$, it will follow that for all $z, w \in G$,
\begin{equation}
\label{6.7}
|(\theta_{b_1} \circ \theta_{b_{2}} \circ \cdots \circ\theta_{b_n})(z)
- (\theta_{b_1} \circ \theta_{b_{2}} \circ \cdots \circ \theta_{b_n})(w)|
\le \frac{1}{L} |z-w|.
\end{equation}
However, for $n \ge 2$ and $z \in G$,
\begin{multline*}
|z + B_n/B_{n-1}| \ge \Re(z + B_n/B_{n-1}) \ge \Re(B_n/B_{n-1})
= \Re(B_{n-2}/B_{n-1} + b_n) 
\\
= \Re\Big(\frac{B_{n-1}}{B_{n-2}}\Big) \frac{|B_{n-2}|^2}
{|B_{n-1}|^2} + \Re(b_n)
\ge \gamma \frac{|B_{n-2}|^2}
{|B_{n-1}|^2} + \gamma.
\end{multline*}
This implies that
\begin{multline}
\label{6.8}
|B_{n-1}^2(z + B_n/B_{n-1})^2| \ge |B_{n-1}|^2 \gamma^2
(1 + |B_{n-2}|^2/|B_{n-1}|^2)^2 
\\
= \gamma^2 |B_{n-2}|^2 (|B_{n-1}|^2/|B_{n-2}|^2 + 2
+ |B_{n-2}|^2/|B_{n-1}|^2).
\end{multline}
Using \eqref{6.6} and \eqref{6.8}, we see that for $z \in G$ and $n \ge 2$,
\begin{equation*}
\Big|\frac{d}{dz}(\theta_{b_1} \circ \theta_{b_{2}} \circ \cdots \circ
\theta_{b_n})(z) \Big| \le (4 \gamma^2 |B_{n-2}|^2)^{-1},
\end{equation*}
with strict inequality unless $|B_{n-1}| = |B_{n-2}|$, and this implies
\eqref{6.7}, with $L:= 4 \gamma^2 |B_{n-2}|^2$.

If we take $n=2$, so $B_{n-2}=1$, we find that for any two elements
$b_1$ and $b_2$ in $\C$ and for all $z, w \in G$, we have
\begin{equation*}
|\theta_{b_1}(\theta_{b_2}(z)) - \theta_{b_1}(\theta_{b_2}(w))|
\le \frac{1}{4 \gamma^2} |z-w|.
\end{equation*}
Taking $b_1 = b$ and $b_2 =c$, we obtain \eqref{6.5}.
\end{proof}

For the remainder of this section we shall restrict ourselves to the case
in Lemma~\ref{lem:6.1} that $\B$ is a subset of the positive reals and
$b \ge \gamma >0$ for all $b \in \B$.
\begin{lem}
\label{lem:6.2}
Let $\B$ denote a finite set of positive reals such that $b \ge \gamma >0$ for
all $b \in \B$ and let notation be as in Lemma~\ref{lem:6.1}. If $b_j$,
$j \ge 1$ denotes a sequence of elements of $\B$, then for all $n \ge 0$,
$B_n >0$ and $B_{n+1}/B_n \ge \gamma$ and for all $n \ge 1$, $A_n >0$
and $A_{n+1}/A_n \ge \gamma$ and $B_n/A_n \ge \gamma$.  For all $k \ge 0$,
we have
\begin{equation}
\label{6.10}
B_{2k} \ge (1+ \gamma^2)^k \qquad \text{and} \qquad 
B_{2k+1} \ge \gamma (1+ \gamma^2)^k.
\end{equation}
For all $\w = (b_1, b_2, \cdots, b_{2m}) \in \B_{2m}$, $m \ge 1$, and
$z, w \in G$, we have
\begin{equation}
\label{6.11}
|(\theta_{b_1} \circ \theta_{b_{2}} \circ \cdots \circ \theta_{b_{2m}})(z)
-(\theta_{b_1} \circ \theta_{b_{2}} \circ \cdots \circ \theta_{b_{2m}})(w)| 
\le (1 +\gamma^2)^{-2m}|z-w|
\end{equation}
and
\begin{equation}
\label{6.12}
|\theta_{\w}(z) - \theta_{\w}(w)| \le  (1 +\gamma^2)^{-2m}|z-w|.
\end{equation}
\end{lem}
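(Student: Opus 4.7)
The plan is to leverage the recurrences \eqref{6.1} and the explicit derivative formulas for the M\"obius compositions provided in the proof of Lemma~\ref{lem:6.1}. Since the $b_j$ are now real and positive, the quantities $A_n, B_n$ are also real, and the complex--part arguments of Lemma~\ref{lem:6.1} collapse to simple real inequalities; however, the statement still requires estimates on the half--plane $G$, so I must keep track of the complex modulus $|B_{n-1} z + B_n|$.

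First I would nail down the positivity and ratio bounds. From $B_0 = 1$, $B_1 = b_1 > 0$ and the recurrence $B_{n+1} = B_{n-1} + b_{n+1} B_n$, induction gives $B_n > 0$ for every $n$; the same induction with $A_0 = 0$, $A_1 = 1$ gives $A_n > 0$ for $n \ge 1$. Dividing the recurrence by $B_n$ yields $B_{n+1}/B_n = B_{n-1}/B_n + b_{n+1} \ge b_{n+1} \ge \gamma$, and likewise for $A_{n+1}/A_n$. For $B_n/A_n \ge \gamma$, I would use induction: the base $n=1$ is $B_1/A_1 = b_1 \ge \gamma$, and if $B_k \ge \gamma A_k$ for $k \le n$, then $B_{n+1} = B_{n-1} + b_{n+1} B_n \ge \gamma A_{n-1} + b_{n+1}\gamma A_n = \gamma A_{n+1}$.

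Next I would prove \eqref{6.10} by a coupled induction on $k$. The base $B_0 = 1 = (1+\gamma^2)^0$ and $B_1 = b_1 \ge \gamma = \gamma(1+\gamma^2)^0$ are immediate. Assuming both estimates hold at level $k$, the recurrence gives
\begin{equation*}
B_{2k+2} = B_{2k} + b_{2k+2} B_{2k+1} \ge (1+\gamma^2)^k + \gamma \cdot \gamma(1+\gamma^2)^k = (1+\gamma^2)^{k+1},
\end{equation*}
and then $B_{2k+3} = B_{2k+1} + b_{2k+3} B_{2k+2} \ge \gamma(1+\gamma^2)^k + \gamma(1+\gamma^2)^{k+1} \ge \gamma(1+\gamma^2)^{k+1}$, closing the induction.

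For the Lipschitz estimates \eqref{6.11} and \eqref{6.12}, I would reuse the derivative identity derived in the proof of Lemma~\ref{lem:6.1}, namely that
\begin{equation*}
\frac{d}{dz}(\theta_{b_1}\circ\cdots\circ\theta_{b_{2m}})(z) = \frac{1}{(B_{2m-1} z + B_{2m})^2},
\end{equation*}
together with the analogous formula for the reversed composition, which by \eqref{6.4} has derivative $(-1)^{2m}/(A_{2m}z + B_{2m})^2$. For $z \in G$, since $A_{2m}, B_{2m-1}, B_{2m} > 0$ and $\Re z \ge 0$, we have $|B_{2m-1} z + B_{2m}|^2 \ge B_{2m}^2$ and $|A_{2m}z + B_{2m}|^2 \ge B_{2m}^2$; combined with \eqref{6.10} this gives a uniform bound $(1+\gamma^2)^{-2m}$ on the modulus of the derivative throughout $G$. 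Since $G$ is convex, integrating along the straight segment joining $w$ to $z$ yields both \eqref{6.11} and \eqref{6.12}. The only mild subtlety is verifying that the segment stays in the domain of differentiability, which follows because $G$ is convex and each $\theta_b$ maps $G$ into $G$, so the compositions are holomorphic on all of $G$; no step requires more than the recurrence, the derivative formula, and the elementary lower bound $|B_{2m-1}z + B_{2m}| \ge B_{2m}$.
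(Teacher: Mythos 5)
Your proposal is correct and follows essentially the same route as the paper: induction on the recurrence \eqref{6.1} for the positivity, ratio, and growth estimates \eqref{6.10}, and then the derivative formulas from Lemma~\ref{lem:6.1} together with $|B_{2m-1}z+B_{2m}|\ge \Re(B_{2m-1}z+B_{2m})\ge B_{2m}\ge(1+\gamma^2)^m$ (and likewise with $A_{2m}$) to get \eqref{6.11} and \eqref{6.12}. The only cosmetic difference is that you run a single coupled induction for both parts of \eqref{6.10} where the paper handles the even and odd subsequences in sequence; the content is identical.
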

\begin{proof}
Using \eqref{6.1} it is an easy induction argument (left to the reader)
to prove that $A_n >0$ for all $n \ge 1$ and $B_n >0$ for all $n \ge 0$.
It then follows immediately from  Lemma~\ref{lem:6.1} that
$A_{n+1}/A_n \ge \gamma$ for $n \ge 1$ and $B_{n+1}/B_n \ge \gamma$ for $n \ge
0$.

Since $B_1 = b_1 \ge \gamma$ and $A_1 =1$, we see that $B_1/A_1 \ge \gamma$.
Arguing by induction, assume that we have proved that
$B_j/A_j \ge \gamma$ for $1 \le j \le n$.  Then we obtain
\begin{equation*}
\frac{B_{n+1}}{A_{n+1}} = \frac{B_{n-1} + b_{n+1} B_n}{A_{n-1} + b_{n+1} A_n}
\ge \frac{A_{n-1} \gamma  + b_{n+1} A_n \gamma}{A_{n-1} + b_{n+1} A_n}
= \gamma,
\end{equation*}
which completes the inductive argument.

We next claim that for all $k \ge 0$, the first inequality in \eqref{6.10}
holds.  For $k=0$, this is immediate, since $B_0=1$. We argue by induction and
assume that we have proved the first inequality in \eqref{6.10} for some $k
\ge 0$. We have that
\begin{equation*}
B_{2k+1} = B_{2k-1} + b_{2k+1} B_{2k} \ge B_{2k-1} + \gamma  B_{2k},
\end{equation*}
and this implies that
\begin{multline*}
B_{2k+2} = B_{2k} + b_{2k+2} B_{2k+1} \ge B_{2k} + \gamma  B_{2k+1}
\\
\ge B_{2k} + \gamma  B_{2k-1} + \gamma^2 B_{2k}
\ge (1+ \gamma^2) B_{2k} \ge (1+ \gamma^2)^{k+1}.
\end{multline*}
This completes the induction argument.

Since $B_1 = b_1 \ge \gamma$, and $B_{2k+1} = B_{2k-1} + b_{2k} B_{2k}
\ge \gamma(1+\gamma^2)^k$ for $k \ge 1$, we obtain the second part
of \eqref{6.10}.

For $z \in H$, we obtain from Lemma~\ref{lem:6.1} that
\begin{equation}
\label{6.14}
\Big|\frac{d}{dz}(\theta_{b_1} \circ \theta_{b_{2}} \circ
\cdots \circ \theta_{b_{2m}})(z)\Big|
\le |B_{2m-1}z + B_{2m}|^{-2}
\end{equation}
and
\begin{equation}
\label{6.15}
\Big|\frac{d}{dz} \theta_{\w}(z)\Big|:=
\Big|\frac{d}{dz}(\theta_{b_{2m}} \circ \theta_{b_{2m-1}} \circ \cdots 
\circ \theta_{b_{1}})(z)\Big| \le |A_{2m}z + B_{2m}|^{-2}.
\end{equation}
Because $B_{2m-1}$, $A_{2m}$, and $B_{2m}$ are positive,
$\Re(B_{2m-1}z + B_{2m}) \ge \Re(B_{2m}) \ge (1+ \gamma^2)^m$ and
$\Re(A_{2m}z + B_{2m}) \ge \Re(B_{2m}) \ge (1+ \gamma^2)^m$. This implies
that for all $z \in H$,
\begin{equation}
\label{6.16}
|B_{2m-1}z + B_{2m}|^{-2} \le (1+ \gamma^2)^{-2m} \quad \text{and} \quad
|A_{2m}z + B_{2m}|^{-2} \le (1+ \gamma^2)^{-2m}.
\end{equation}
Using \eqref{6.14}, \eqref{6.15}, and \eqref{6.16}, we obtain
\eqref{6.11} and \eqref{6.12}.
\end{proof}

\begin{remark}
\label{rem:6.1} % Lemma~\ref{lem:6.1}  reveals some peculiarities in our
% notation. 
Given $\w = (b_1, b_2, \cdots, b_n) \in \B_n$, we have defined $\theta_{\w} =
\theta_{b_n} \circ \theta_{b_{n-1}} \circ \cdots \circ \theta_{b_1}$ (to
conform to notation used in \cite{N-P-L}). However, we could also have
defined $\tilde \theta_{\w} = \theta_{b_1} \circ \theta_{b_{2}} \circ \cdots
\circ \theta_{b_n}$, which is perhaps more natural.  Similarly, we have
defined $g_{\w}(z)$ by
\begin{equation*}
g_{b_n}(\theta_{b_{n-1}} \circ \theta_{b_{n-2}} \circ \cdots
\circ \theta_{b_1}(z))
g_{b_{n-1}}(\theta_{b_{n-2}} \circ \theta_{b_{n-3}} \circ \cdots 
\circ \theta_{b_1}(z))\cdots g_{b_2}(\theta_{b_1}(z)) g_{b_{1}}(z).
\end{equation*}
However, we could have defined
\begin{equation*}
\tilde g_{\w}(z) = g_{b_1}(\theta_{b_{2}} \circ \theta_{b_{3}} \circ \cdots
\circ \theta_{b_n}(z))
g_{b_{2}}(\theta_{b_{3}} \circ \theta_{b_{4}} \circ \cdots 
\circ \theta_{b_n}(z)) \cdots g_{b_{n-1}}(\theta_{b_n}(z)) g_{b_{n}}(z).
\end{equation*}
We leave to the reader the verification that
\begin{equation*}
(\Lambda_s^n f)(z) = \sum_{\w \in \B_n}[g_{\w}(z)]^s f(\theta_{\w}(z))
=  \sum_{\w \in \B_n}[\tilde g_{\w}(z)]^s f(\tilde \theta_{\w}(z)).
\end{equation*}
\end{remark}

\begin{thm}
\label{thm:6.3}
Let $\B$ be a finite set of positive reals such that  $b \ge \gamma >0$ for
all $b \in \B$. For such $b$ and all $x \ge 0$, define
$\theta_b(x) = (x+b)^{-1}$.  If $A \ge \gamma^{-1}$, define
$H = \{x \in \R: 0 <x <A\}$, so $\theta_b(\bar H) \subset [0, \gamma^{-1}]$.
Assume that $m$ is a positive integer
and $g_b : [0,A] \to \R$ is a $C^m$ function such that $g_b(x) >0$
for all $x \in [0,A]$.  Let $X=X_m$ denote the Banach space $C^m(\bar H)$
and for $s >0$ define
\begin{equation*}
%%\label{6.17}
(\Lambda_s f)(x) = \sum_{b \in \B} [g_b(x)]^s f(\theta_b(x)).
\end{equation*}
Then all the hypotheses of Theorem~\ref{thm:1.1} are satisfied, so
$\Lambda_s$ has a unique (to within normalization) strictly positive
eigenfunction $v_s \in X$ with eigenvalue $r(\Lambda_s) >0$.
Furthermore, in our usual notation, for $1 \le j \le m$ and $x \in [0,A]$,
\begin{equation}
\label{6.18}
\frac{D^j v_s(x)}{v_s(x)} = \lim_{n \rightarrow \infty}
\frac{\sum_{\w \in \B_n} D^j g_{\w}(x)}{\sum_{\w \in \B_n} g_{\w}(x)}.
\end{equation}
\end{thm}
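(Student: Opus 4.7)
The plan is to reduce Theorem~\ref{thm:6.3} directly to Theorem~\ref{thm:1.1} by verifying hypotheses (H4.1)--(H4.3) for $\Lambda_s$ on $X_m = C^m(\bar H)$, with the Section~\ref{sec:exist} data $g_\be$ replaced by $g_\be^s$, and then to read off \eqref{6.18} from \eqref{1.14}.

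First I would verify (H4.1): since $g_\be\in C^m(\bar H)$ and $g_\be(x)>0$ on $\bar H$, the composition $g_\be^s=\exp(s\log g_\be)$ lies in $C^m(\bar H)$ and is strictly positive. Next, (H4.2): the map $\theta_\be(x)=1/(x+b)$ extends to a $C^\infty$ function on an open neighborhood of $\bar H$ because $x+b\ge b\ge \gamma>0$ there, and the hypothesis $A\ge \gamma^{-1}$ gives $\theta_\be(\bar H)\subset [0,\gamma^{-1}]\subset \bar H$.

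The essential step is (H4.3), and this is exactly where Lemma~\ref{lem:6.2} pays off. I would take $\mu=2$ and $\kappa=(1+\gamma^2)^{-2}<1$: for any $\w=(b_1,b_2)\in\B_2$ and any $x,y\in\bar H\subset G$, inequality \eqref{6.12} of Lemma~\ref{lem:6.2} (applied with $m=1$) gives
\begin{equation*}
|\theta_\w(x)-\theta_\w(y)|\le (1+\gamma^2)^{-2}|x-y|,
\end{equation*}
which is the uniform contraction required by (H4.3). With (H4.1)--(H4.3) in hand, Theorem~\ref{thm:1.1} produces a unique (after normalization) strictly positive eigenfunction $v_s\in X_m$ with $\Lambda_s v_s=r(\Lambda_s)v_s$ and $r(\Lambda_s)>0$, proving the first part of the theorem.

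For \eqref{6.18}, I would apply \eqref{1.14} with the constant function $u\equiv 1$. Observe that $(\Lambda_s^n u)(x)=\sum_{\w\in\B_n} g_\w(x)^s$, where on the right $g_\w$ denotes the $n$-fold product built from the original $g_\be$ as in Section~\ref{sec:exist}; equivalently, $g_\w^s$ is the iterated weight associated with the replacement $g_\be\mapsto g_\be^s$. The uniform $C^m$ convergence of $r^{-n}\Lambda_s^n u\to s_u v_s$ given by \eqref{1.10} justifies passing $D^j$ through the limit for $0\le j\le m$, and then dividing the limit formula \eqref{1.11} by \eqref{1.12} cancels the normalization constant $s_u$ to yield the ratio identity \eqref{6.18}. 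There is no serious obstacle; the only point worth flagging is the bookkeeping distinction between $g_\w$ and $g_\w^s$ created by invoking Theorem~\ref{thm:1.1} with $g_\be^s$ in place of $g_\be$, but this is purely notational and the conclusion \eqref{6.18} follows immediately from \eqref{1.14}.
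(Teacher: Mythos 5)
Your proposal is correct and follows essentially the same route as the paper: the paper likewise verifies (H4.1)--(H4.3), invoking \eqref{6.11}/\eqref{6.12} of Lemma~\ref{lem:6.2} to get (H4.3) with $\mu=2$ and $\kappa=(1+\gamma^2)^{-2}$, and then obtains \eqref{6.18} from Theorem~\ref{thm:1.1} and Remark~\ref{rem:1.2} via \eqref{1.14}. Your added remarks on the smoothness of $g_b^s$ and the $g_\w$ versus $g_\w^s$ bookkeeping are fine and merely make explicit what the paper leaves implicit.
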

\begin{proof}
Theorem~\ref{thm:6.3} follows from Theorem~\ref{thm:1.1} and
Remark~\ref{rem:1.2} once we verify that conditions (H4.1), (H4.2), and
(H4.3) in Section~\ref{sec:exist} are satisfied.  Conditions (H4.1) and (H4.2)
are obviously satisfied. Also, it follows from \eqref{6.11} or \eqref{6.12}
in Lemma~\ref{lem:6.2} that for all $x,y \in [0,A]$ and all $b_1, b_2 \in \B$,
\begin{equation*}
|\theta_{b_1}(\theta_{b_2}(x)) - \theta_{b_1}(\theta_{b_2}(y))|
\le (1+ \gamma^2)^{-2} |x-y|,
\end{equation*}
which verifies (H4.3) with $\mu =2$ and $\kappa = (1+ \gamma^2)^{-2}$.
\end{proof}

Notice that if $g_b(\cdot)$ is $C^{\infty}$ on $[0,A]$, Theorem~\ref{thm:6.3}
implies that $v_s(\cdot)$ is $C^{\infty}$ on $[0,A]$ and \eqref{6.18}
holds for all $j \ge 1$.

We are interested in Theorem~\ref{thm:6.3} in the special case that
$g_b(x) = |\theta_b^{\prime}(x)|^s = (x+b)^{-2s}$.  In this case, it is
easy to verify that for $\mu \ge 1$,
\begin{equation*}
%%\label{6.19}
(\Lambda_s^{\mu} f)(x) = \sum_{\w \in \B_{\mu}} |\theta_{\w}^{\prime}(x)|^s
f(\theta_{\w}(x)).
\end{equation*}
If $\w = (b_1, b_2, \cdots, b_{\mu}) \in \B_{\mu}$ and $A_j$ and $B_j$ are
as defined in Lemma~\ref{lem:6.1}, recall that
\begin{equation*}
%%\label{6.20} 
[g_{\w}(x)]^s = |\theta_{\w}^{\prime}(x)|^s
= (A_{\mu} x + B_{\mu})^{-2s} = A_{\mu}^{-2s}(x + B_{\mu}/A_{\mu})^{-2s}.
\end{equation*}
If $1 \le j \le m$, it follows that
\begin{multline}
\label{6.21}
\frac{D^j [g_{\w}(x)]^s }{[g_{\w}(x)]^s} = 
\frac{D^j [(x + B_{\mu}/A_{\mu})^{-2s}]}{(x + B_{\mu}/A_{\mu})^{-2s}}
\\
= (-1)^j(2s)(2s+1) \cdots (2s+j-1) (x + B_{\mu}/A_{\mu})^{-j}.
\end{multline}

Lemma~\ref{lem:6.2} implies that $B_{\mu}/A_{\mu} \ge \gamma$ for all $\mu \ge
1$. On the other hand, if $\Gamma = \max \{b: b\in \B\}$, a calculation
gives
\begin{equation*}
B_1/A_1 = b_1 \le \Gamma \qquad \text{and} \qquad
B_2/A_2 = b_1 + b_2^{-1} \le \Gamma + \gamma^{-1}.
\end{equation*}
Let $K = \Gamma + \gamma^{-1}$ and, arguing inductively, assume that we
have proved, for some $n \ge 2$, that
\begin{equation}
\label{6.22}
B_j/A_j \le K, \quad 1 \le j \le n.
\end{equation}
Then we obtain
\begin{equation*}
\frac{B_{n+1}}{A_{n+1}} = \frac{B_{n-1} + b_{n+1} B_n}{A_{n-1} + b_{n+1} A_n}
\le  \frac{K A_{n-1} + K b_{n+1} A_n}{A_{n-1} + b_{n+1} A_n} = K,
\end{equation*}
which proves that \eqref{6.22} holds for all $n$.  It follows that
for $0 \le x \le A$ and $\mu \ge 1$, we have
\begin{equation}
\label{6.23}
(K + A)^{-j} \le (x + B_{\mu}/A_{\mu})^{-j} \le \gamma^{-j}.
\end{equation}
Using \eqref{6.23} in \eqref{6.21}, we obtain for $0 \le x \le A$ 
and $\mu \ge 1$,
\begin{multline}
\label{6.24}
(2s)(2s+1) \cdots (2s+j-1)(K + A)^{-j} \le
(-1)^j \frac{D^j [g_{\w}(x)] }{g_{\w}(x)}
\\
\le (2s)(2s+1) \cdots (2s+j-1)\gamma^{-j}.
\end{multline}

Thus we have proved the following corollary of Theorem~\ref{thm:6.3}.
\begin{cor}
\label{cor:6.4}
Let $\B$ be a finite set of positive real numbers and define
$\gamma = \min\{b : b \in \B\}$, $\Gamma = \max\{b : b \in \B\}$,
and $K = \gamma^{-1} + \Gamma$.  Let $A$ be any real number with
$A \ge \gamma^{-1}$ and for any positive integer $m$, define
$X = X_m = C^m([0,A])$. For $s >0$ define a bounded linear operator
$\Lambda_s: X_m \to X_m$ by
\begin{equation*}
(\Lambda_s f)(x) = \sum_{b \in \B} (x+b)^{-2s} f(\theta_b(x)),
\end{equation*}
where $\theta_b(x) = (x+b)^{-1}$.  Then $\Lambda_s$
has a unique (to within normalization) strictly positive eigenfunction $v_s
\in X_m$ and $v_s$ is actually infinitely differentiable.  Furthermore, for
integers
$j \ge 1$, we have the estimates 
\begin{multline}
\label{6.25}
(2s)(2s+1) \cdots (2s+j-1)(K + A)^{-j} \le
(-1)^j \frac{D^j [v_s(x)] }{v_s(x)}
\\
\le (2s)(2s+1) \cdots (2s+j-1)\gamma^{-j}, \qquad x \in [0,A].
\end{multline}
\end{cor}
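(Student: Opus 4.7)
My plan is to extract Corollary~\ref{cor:6.4} directly from Theorem~\ref{thm:6.3} together with the explicit M\"obius computation (6.21) and a uniform two-sided bound on the ratio $B_\mu/A_\mu$. I would first apply Theorem~\ref{thm:6.3} with base weight $g_b(x) = (x+b)^{-2}$, so that $[g_b(x)]^s = (x+b)^{-2s}$ matches the operator in the corollary. Since each $g_b$ is $C^\infty$ and strictly positive on $[0,A]$ and $\theta_b(\bar H)\subset [0,\gamma^{-1}]\subset [0,A]$, the hypotheses of Theorem~\ref{thm:6.3} hold for every positive integer $m$. This produces, for each $m$, a strictly positive $C^m$ eigenfunction of $\Lambda_s$, unique up to normalization. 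Uniqueness forces these eigenfunctions to coincide once normalized, so $v_s\in C^\infty([0,A])$, and the limit formula
\[
\frac{D^j v_s(x)}{v_s(x)} = \lim_{n \to \infty} \frac{\sum_{\w \in \B_n} D^j [g_\w(x)]^s}{\sum_{\w \in \B_n} [g_\w(x)]^s}
\]
holds for every $j \ge 1$ and every $x \in [0,A]$.

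The core of the argument is a uniform pointwise bound on $D^j [g_\w]^s / [g_\w]^s$. For $\w = (b_1,\ldots,b_\mu) \in \B_\mu$, Lemma~\ref{lem:6.1} realizes $\theta_\w$ as a M\"obius transformation with denominator $A_\mu x + B_\mu$, whence $|\theta_\w'(x)| = (A_\mu x + B_\mu)^{-2}$, so $[g_\w(x)]^s = A_\mu^{-2s}(x + B_\mu/A_\mu)^{-2s}$. A direct $j$-fold differentiation then recovers identity (6.21),
\[
\frac{D^j [g_\w(x)]^s}{[g_\w(x)]^s} = (-1)^j (2s)(2s+1)\cdots(2s+j-1) \bigl(x + B_\mu/A_\mu\bigr)^{-j},
\]
so it remains to bound $B_\mu/A_\mu$ on both sides uniformly in $\w$.

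The lower bound $B_\mu/A_\mu \ge \gamma$ is already provided by Lemma~\ref{lem:6.2}. For the upper bound $B_\mu/A_\mu \le K := \gamma^{-1} + \Gamma$, I would proceed by induction on $\mu$. The base cases are immediate: $B_1/A_1 = b_1 \le \Gamma \le K$ and $B_2/A_2 = b_1 + 1/b_2 \le \Gamma + \gamma^{-1} = K$. The inductive step uses the recursion $B_{n+1}/A_{n+1} = (B_{n-1} + b_{n+1} B_n)/(A_{n-1} + b_{n+1} A_n)$ of \eqref{6.1}: if both $B_{n-1} \le K A_{n-1}$ and $B_n \le K A_n$, then the numerator is bounded by $K$ times the denominator. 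Consequently $\gamma \le x + B_\mu/A_\mu \le A + K$ on $[0,A]$, yielding the uniform bound
\[
(2s)(2s+1)\cdots(2s+j-1)(A+K)^{-j} \le (-1)^j \frac{D^j [g_\w(x)]^s}{[g_\w(x)]^s} \le (2s)(2s+1)\cdots(2s+j-1)\gamma^{-j}.
\]

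To conclude, I would multiply through by the positive weight $[g_\w(x)]^s$, sum over $\w \in \B_n$, and divide by the positive denominator $\sum_\w [g_\w(x)]^s$; the two-sided inequality is preserved for the quotient because it is a convex combination of terms each obeying the same bound. Passing to the limit $n \to \infty$ using the formula from the first paragraph then gives \eqref{6.25}. The only piece not already explicit in the text is the uniform upper bound $B_\mu/A_\mu \le K$, which is essentially a mediant argument; the main ``obstacle'' is really the bookkeeping of aligning the abstract framework of Theorem~\ref{thm:6.3} with the explicit M\"obius setting, and in particular recognizing that $g_\w$ for the operator $\Lambda_s$ of the corollary equals $|\theta_\w'|$ and hence has the explicit form $(A_\mu x + B_\mu)^{-2}$.
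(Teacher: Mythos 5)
Your proposal is correct and follows essentially the same route as the paper: Theorem~\ref{thm:6.3} with $g_b(x)=(x+b)^{-2}$ (giving smoothness and the limit formula \eqref{6.18}), the explicit identity \eqref{6.21}, the two-sided bound $\gamma \le B_\mu/A_\mu \le K$ (your mediant induction is exactly the paper's argument for \eqref{6.22}), and passage to the limit through the convex combination \eqref{6.24}. No gaps.
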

\begin{proof}
Equation \eqref{6.25} follows from \eqref{6.18} and \eqref{6.24} by letting
$n \rightarrow \infty$, where $\w \in \B_{n}$.
\end{proof}

\begin{remark}
\label{rem:6.2}
Suppose that assumptions and notation are as in Corollary~\ref{cor:6.4},
so $v_s: [0,A] \mapsto \R$ is strictly positive and $v_s \in C^m([0,A])$.
Then $v_s(\cdot)$ has an analytic, complex-valued extension to
$H = \{z \in \C: \Re(z) >0\}$.  The idea of the proof is to consider
the linear operator
\begin{equation*}
%%\label{6.26}
(R_sf)(z) = \sum_{b \in \B} (z+b)^{-2s} f([z+b]^{-1}),
\end{equation*}
where $f$ is an element of an appropriate Banach space of complex
analytic functions $f(\cdot)$ defined on 
$\{z \in \C : |z- A/2| < A/2\} :=D$ and continuous on $\bar D$.

Since we shall not use this analyticity result, we omit the proof, but its
interest for us is precisely that in more general situations, it does not
seem possible to study our problem in a Banach space of analytic functions.
Suppose that $\B$ is a finite set of complex numbers as in Lemma~\ref{lem:6.1}
and $\theta_b(z) = (z+b)^{-1}$ for $b \in \B$ and $\Re(z) \ge 0$. If $A >
\gamma^{-1}$ and $D$ is as above, one can prove that
$\{\theta_b(z) : z \in \bar D, b \in 
\B\}$ is contained in a compact subset of $D$. 
For $m \ge 2$ and $s >0$, one defines $\Lambda_s: C^m(\bar D) \to
C^m(\bar D)$ by
\begin{equation*}
%%\label{6.27}
(\Lambda_sf)(z) = \sum_{b \in \B} |z+b|^{-2s} f(\theta_b(z)),
\end{equation*}
(note $(z+b)^{-2s}$ has been replaced by $|z+b|^{-2s}$), and $\Lambda_s$ has a
unique, normalized eigenfunction $v_s(\cdot)$ such that $v_s(z) >0$ for all $z
\in \bar D$. The eigenvalue of $v_s$ is $r(\Lambda_s)$, the spectral radius of
$\Lambda_s$.  In the context of {\it complex continued fractions} (see
\cite{Gardner-Mauldin}, \cite{Mauldin-Urbanski}, \cite{N-P-L}, and
\cite{Priyadarshi}), one wants to estimate $r(\Lambda_s)$.  However $z \mapsto
|z+b|^{-2s}$ and $z \mapsto v_s(z)$ are $C^{\infty}$, but not complex analytic
on $D$.  If $\B$ is not contained in $\R$, in general there does not seem to
be a natural bounded linear operator in a Banach space of analytic functions
with spectral radius $r(\Lambda_s)$.  In this generality, the linear operator
$R_s$ can still be defined in a Banach space of analytic functions, but will
almost always have spectral radius less than $r(\Lambda_s)$.

\end{remark}

\section{Computing the Spectral Radius of $A_s$ and $B_s$}
\label{sec:compute-sr}
In previous sections, we have constructed matrices $A_s$ and $B_s$ such that
$r(A_s) \le r(L_s) \le r(B_s)$.  The $(n+1) \times (n+1)$ matrices $A_s$ and
$B_s$ have nonnegative entries, so the Perron-Frobenius theory for such
matrices implies that $r(B_s)$ is an eigenvalue of $B_s$ with corresponding
nonnegative eigenvector, with a similar statement for $A_s$. One might also
hope that standard theory (see \cite{D}) would imply that $r(B_s)$,
respectively $r(A_s)$, is an eigenvalue of $B_s$ with algebraic multiplicity
one and that all other eigenvalues $z$ of $B_s$ (respectively, of $A_s$)
satisfy $|z| < r(B_s)$ (respectively, $|z| < r(A_s)$). Indeed, this would be
true if $B_s$ were {\it primitive}, i.e., if $B_s^k$ had all positive entries
for some integer $k$.  However, typically $B_s$ has many zero columns and
$B_s$ is neither primitive nor {\it irreducible} (see \cite{D}); and the same
problem occurs for $A_s$.  Nevertheless, the desirable spectral properties
mentioned above are satisfied for both $A_s$ and $B_s$. Furthermore $B_s$ has
an eigenvector $\wrm_s$ with all positive entries and with eigenvalue
$r(B_s)$; and if $x$ is any $(n+1) \times 1$ vector with all positive entries,
\begin{equation*}
\lim_{k \rightarrow \infty} \frac{B_s^k(x)}{\|B_s^k(x)\|} =
  \frac{\wrm_s}{\|\wrm_s\|},
\end{equation*}
where the convergence rate is geometric.  Of course, corresponding theorems
hold for $A_s$.  Such results justify standard numerical algorithms for
approximating $r(B_s)$ and $r(A_s)$.

In this section, we shall prove these assertions.  The basic point is
simple. Although $A_s$ and $B_s$ both map the cone $K$ of nonnegative vectors
in $\R^{n+1}$ into itself, $K$ is {\it not} the natural cone in which such
matrices should be studied.

To outline our method of proof, it is convenient to describe, at least in the
finite dimensional case, some classical theorems concerning linear maps $L:
\R^N \to \R^N$ which leave a cone $\CalC \subset \R^N$ invariant.  Recall that
a closed subset $\CalC$ of $\R^N$ is called a closed cone if (i) $ax + by \in
\CalC$ whenever $a \ge 0$, $b \ge 0$, $x \in \CalC$ and $y \in \CalC$ and (ii)
if $x \in \CalC \setminus \{0\}$, then $-x \notin \CalC$.  If $\CalC$ is a
closed cone, $\CalC$ induces a partial ordering on $\R^N$ denoted by
$\le_{\CalC}$ (or simply $\le$, if $\CalC$ is obvious) by $u \le_{\CalC} v$ if
and only if $v-u \in \CalC$.  If $u,v \in \CalC$, we shall say that $u$ and
$v$ are {\it comparable} (with respect to $\CalC$) and we shall write $u
\sim_{\CalC} v$ if there exist positive scalars $a$ and $b$ such that $v
\le_{\CalC} au$ and $u \le_{\CalC} b v$. {\it Comparable with respect to}
$\CalC$ partitions $\CalC$ into equivalence classes of comparable elements.
We shall henceforth assume that $\interior(\CalC)$, the interior of $\CalC$,
is nonempty.  Then an easy argument shows that all elements of
$\interior(\CalC)$ are comparable.  Generally, if $x_0 \in \CalC$ and
$\CalC_{x_0}: = \{x \in \CalC : x \sim_{\CalC} x_0\}$, all elements of
$\CalC_{x_0}$ are comparable.

Following standard notation, if $u,v \in \CalC$ are comparable elements, we
define \begin{align*}
M(u/v;\CalC) &= \inf\{\beta >0 : u \le \beta v\},
\\
m(u/v;\CalC) &= M(v/u;\CalC)^{-1} = \sup\{\alpha >0 : \alpha v \le u\}.
\end{align*}
If $u$ and $v$ are comparable elements of $\CalC \setminus \{0\}$, we define
Hilbert's projective metric $d(u,v;\CalC)$ by
\begin{equation*}
d(u,v;\CalC) = \log(M(u/v;\CalC)) + \log M(v/u;\CalC)).
\end{equation*}
We make the convention that $d(0,0;\CalC) =0$. If $x_0 \in \CalC \setminus
\{0\}$, then for all $u,v,w \in \CalC_{x_0}$, one can prove that (i)
$d(u,v;\CalC) \ge 0$, (ii) $d(u,v;\CalC) = d(v,u;\CalC)$, and (iii)
$d(u,v;\CalC) + d(v,w;\CalC) \ge d(u,w;\CalC)$. Thus $d$ restricted to
$\CalC_{x_0}$ is almost a metric, but $d(u,v;\CalC) =0$ if and only if $v =
tu$ for some $t >0$ and generally, $d(su,tv;\CalC) = d(u,v;\CalC)$ for all
$u,v \in \CalC_{x_0}$ and all $s >0$ and $t >0$.  If $\|\cdot\|$ is any norm
on $\R^N$ and $S:= \{ u \in \interior(\CalC): \|u\|=1\}$ (or, more generally,
if $x_0 \in \CalC \setminus \{0\}$ and $S = \{x \in \CalC_{x_0} : \|x\| =1\}$,
then $d(\cdot, \cdot; \CalC)$, restricted to $S \times S$, gives a metric on
$S$; and it is known that $S$ is a complete metric space with this metric.

With these preliminaries we can describe a special case of the Birkhoff-Hopf
theorem. We refer to \cite{S}, \cite{V}, and \cite{AA} for the original papers
and to \cite{U} and \cite{T} for an exposition of a general version of this
theorem and further references to the literature.  We remark that
P.~P.~Zabreiko, M.~A~Krasnosel$'$skij, Y.~V.~Pokornyi, and
A.~V.~Sobolev independently obtained closely related theorems; and we refer to
\cite{Y} for details.  If $\CalC$ is a closed cone as above, $S = \{x \in
\interior(\CalC): \|x\|=1 \}$, and $L: \R^N \to \R^N$ is a linear map such
that $L(\interior(\CalC)) \subset \interior(\CalC)$, we define
$\Delta(L;\CalC)$, {\it the projective diameter} of $L$ by
\begin{multline*}
\Delta(L;\CalC) = \sup\{d(Lx,Ly;\CalC): x,y \in \CalC \text{ and }
 Lx \sim_{\CalC} Ly\}
\\
= \sup\{d(Lx,Ly;\CalC): x,y \in \interior(\CalC)\}.
\end{multline*}
The Birkhoff-Hopf theorem implies that if $\Delta:= \Delta(L;\CalC) < \infty$,
then $L$ is a contraction mapping with respect to Hilbert's projective metric.
More precisely, if we define $\lambda = \tanh(\tfrac{1}{4} \Delta) <1$, then
for all $x,y \in \CalC \setminus \{0\}$ such that $x \sim_{\CalC} y$, we have
\begin{equation*}
d(Lx,Ly;\CalC) \le \lambda d(x,y;\CalC),
\end{equation*}
and the constant $\lambda$ is optimal.

If we define $\Phi:S \to S$ by $\Phi(x) = L(x)/\|L(x)\|$, it follows that
$\Phi$ is a contraction mapping with a unique fixed point $v \in S$, and
$v$ is necessarily an eigenvector of $L$ with eigenvector $r(L) := r=$
the spectral radius of $L$.  Furthermore, given any $x \in \interior(\CalC)$,
there are explicitly computable constants $M$ and $c <1$ (see Theorem 2.1
in \cite{U}) such that for all $k \ge 1$,
\begin{equation*}
\|L^k(x)/\|L^k(x)\| -v\| \le Mc^k;
\end{equation*}
and the latter inequality is exactly the sort of result we need.  Furthermore,
it is proved in Theorem 2.3 of \cite{U} that $r=r(L)$ is an
algebraically simple eigenvalue of $L$ and that if $\sigma(L)$ denotes
the spectrum of $L$ and $q(L)$ denotes the {\it spectral clearance of } $L$,
\begin{equation*}
q(L):= \sup\{|z|/r(L): z \in \sigma(L), z \neq r(L)\},
\end{equation*}
then $q(L) <1$ and $q(L)$ can be explicitly estimated.

If $A_s$, $B_s$, and $L_s$ are as in Section~\ref{sec:1dexps}, it remains to
find a suitable cone as above.  For the remainder of this section, $[a,b]$
will denote a fixed, closed bounded interval and $s$ a fixed nonnegative real.
For a given positive integer $n \ge 2$ and for integers $j$, $0 \le j \le n$,
we shall write $h = (b-a)/n$ and $x_j = a + jh$. $C$ will denote a fixed
constant and we shall always assume at least that
\begin{equation}
\label{9.1}
C h /4 \le 1.
\end{equation}
In our applications, $C$ will depend on $s$, but we shall not indicate this
dependence in our notation.  If $\wrm: \{x_j \, | \, 0 \le j \le n\} \to \R$,
one can extend $\wrm$ to a piecewise linear map $w^I:[a,b] \to \R$ by defining
\begin{equation}
\label{9.2}
w^I(x) = \frac{x-x_j}{h}\wrm_{j+1} + \frac{x_{j+1} - x}{h} \wrm_j, \quad
\text{for } x_j \le x \le x_{j+1}, \quad 0 \le j < n,
\end{equation}
where we have written $\wrm_j = \wrm(x_j)$.

We shall denote by $X_n$ (or $X$, if $n$ is obvious), the real vector space of
maps $\wrm:\{x_j \, | \, 0 \le j \le n\} \to \R$; obviously $X_n$ is linearly
isomorphic to $\R^{n+1}$, and we shall consider $A_s$, $B_s$, and $L_s$ as
maps of $X_n$ to $X_n$.  Note that in applying the results described above, we
set $N = n+1$.  For a given real $M >0$, we shall denote by $K_M \subset X_n$
the closed cone with nonempty interior given by
\begin{multline}
\label{9.3}
K_M = \{\wrm \in X_n \, | \, \wrm_{j+1} \le \wrm_j \exp(Mh) 
\\
\text { and } \wrm_j \le \wrm_{j+1}\exp(Mh), \quad 0 \le j < n\}.
\end{multline}
The reader can verify that if 
$\wrm =(\wrm_0, \wrm_1, \cdots, \wrm_n) \in K_M\setminus\{0\}$, 
then $\wrm_j >0$ for $0 \le j \le n$.

If $K_M \subset X_n$ are as above, suppose that $L:X_n \to X_n$ is a linear
map and that there exists $M^{\prime}$, $0 < M^{\prime} < M$, such that
$L(K_M\setminus\{0\}) \subset K_{M^{\prime}}\setminus\{0\}$. After correcting
the typo in the formula for $d_2(f,g)$ on page 286 of \cite{F}, it follows
from Lemma 2.12 on page 284 of \cite{F} that
\begin{equation*}
\sup\{d(f,g;K_M) : f,g \in K_{M^{\prime}}\setminus\{0\}\} 
\le 2 \log\Big(\frac{M + M^{\prime}}{M- M^{\prime}}\Big) 
+ 2 M^{\prime}(b-a) < \infty.
\end{equation*}
This implies that $\Delta(L;K_M) < \infty$, which in turn implies that
$L$ has a normalized eigenvector $v \in K_{M^{\prime}}$ with positive
eigenvalue $r = r(L) =$ the spectral radius of $L$.  
Furthermore, $r$ has algebraic multiplicity 1, $q(L) <1$, and
$\underset{k \to \infty}{\lim} \|L^k(x)/\|L^k(x)\| -v\| =0$ for all $x \in K_M
\setminus\{0\}$.  Thus it suffices to prove for appropriate maps $L$
that $L(K_M\setminus\{0\}) \subset K_{M^{\prime}}\setminus\{0\}$ for
some $M^{\prime} < M$.

If $x_j$, $0 \le j \le n$ are as above, define a map $Q:[a,b] \to [0,h^2/4]$
by
\begin{equation*}
Q(u) = (x_{j+1} - u)(u-x_j), \quad \text{for } x_j \le u \le x_{j+1}, \quad
0 \le j < n.
\end{equation*}

\begin{lem}
\label{lem:9.1}  Assume that $\beta \in K_{M_0} \setminus \{0\}$ for some
$M_0 >0$, that $0 < h \le 1$ and that $h$ and $C$ satisfy \eqref{9.1}.
Let $\theta:[a,b] \to [a,b]$ and define $\hat \beta_s \in X_n$ by
\begin{equation*}
%%\label{9.4} 
\hat \beta_s(x_k) = [1 + \tfrac{1}{2} C Q( \theta(x_k))] [\beta(x_k)]^s.
\end{equation*}
Then $\hat \beta_s \in K_{M_1}$, where $M_1 = sM_0 + (1+h)/2 \le M_0 + 1$.
\end{lem}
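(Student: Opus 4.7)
The plan is to verify the two-sided ratio bound defining $K_{M_1}$ directly, one consecutive pair at a time. First I would observe that $\beta \in K_{M_0}\setminus\{0\}$ forces $\beta_k := \beta(x_k) > 0$ for every $k$ (as noted just after the definition of $K_M$), so $\hat\beta_s(x_k) = \alpha_k \beta_k^s$, where $\alpha_k := 1 + \tfrac{1}{2}C\,Q(\theta(x_k))$, is well-defined and strictly positive. Since $Q$ takes values in $[0,h^2/4]$, one has $\alpha_k \in [1,\,1 + Ch^2/8]$; the standing hypothesis $Ch/4 \le 1$ from \eqref{9.1} then gives $Ch^2/8 \le h/2$, so in fact $\alpha_k \in [1,\,1+h/2]$.

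The core estimate is then a single line. For $0 \le k < n$,
\[
\frac{\hat\beta_s(x_{k+1})}{\hat\beta_s(x_k)} = \frac{\alpha_{k+1}}{\alpha_k}\left(\frac{\beta_{k+1}}{\beta_k}\right)^{\!s} \le \left(1 + \tfrac{h}{2}\right)\exp(sM_0 h),
\]
combining $\alpha_k \ge 1$, $\alpha_{k+1} \le 1+h/2$, and $\beta_{k+1} \le \beta_k\exp(M_0 h)$ from $\beta \in K_{M_0}$. Using $1+h/2 \le e^{h/2}$ together with the identity $M_1 h = sM_0 h + (1+h)h/2 \ge sM_0 h + h/2$, the right-hand side is bounded by $\exp(M_1 h)$. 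The reverse inequality $\hat\beta_s(x_k) \le \hat\beta_s(x_{k+1})\exp(M_1 h)$ follows from the identical argument with $k$ and $k+1$ interchanged, since the bounds $\alpha_k/\alpha_{k+1} \le 1+h/2$ and $\beta_k/\beta_{k+1} \le \exp(M_0 h)$ are symmetric. This gives $\hat\beta_s \in K_{M_1}$.

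Finally, for the subsidiary bound $M_1 \le M_0 + 1$: since $0 < h \le 1$, $(1+h)/2 \le 1$, and in the regime $s \le 1$ (the relevant range for computing a Hausdorff dimension in $\R^1$) one has $sM_0 \le M_0$, whence $M_1 = sM_0 + (1+h)/2 \le M_0 + 1$.

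There is no real obstacle here; the only point requiring attention is to deploy the hypothesis $Ch/4 \le 1$ at the correct moment, so that the factor $C$ (which depends on $s$ and may be large) is absorbed into a harmless $h/2$ term that combines cleanly with the $sM_0 h$ coming from the $K_{M_0}$-condition on $\beta$. Everything else is bookkeeping on products of two-sided exponential ratio bounds.
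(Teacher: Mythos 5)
Your proof is correct and follows essentially the same route as the paper's: factor $\hat\beta_s(x_k)$ into the correction term $1+\tfrac12 CQ(\theta(x_k))$ and the power $[\beta(x_k)]^s$, bound the consecutive ratio of each factor separately, and multiply; your direct estimate $\alpha_{k+1}/\alpha_k\le 1+h/2\le e^{h/2}\le e^{(1+h)h/2}$ is a slightly cruder but perfectly valid substitute for the paper's bound $|\log(1+u)-\log(1+v)|\le(1+h)|u-v|\le(1+h)h/2$ obtained by integrating $1/t$. Your observation that the subsidiary inequality $M_1\le M_0+1$ needs $s\le 1$ is a fair point the paper glosses over, and is harmless in the intended range of $s$.
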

\begin{proof}
Define $\psi \in X_n$ by
\begin{equation*}
\psi(x_k) = 1 + \tfrac{1}{2} C Q(\theta(x_k))
\end{equation*}
and suppose we can prove that $\psi \in K_{(1+h)/2}$.  For notational
convenience define $b(x_k) = [\beta(x_k)]^s$.
Then for $0 \le k <n$,
we obtain
\begin{multline*}
\psi(x_k) b(x_k) \le \psi(x_{k+1}) \exp([1+h]h/2) b(x_{k+1}) \exp(sM_0 h)
 \\
= \psi(x_{k+1}) b(x_{k+1}) \exp(M_1h),
\end{multline*}
and the same calculation gives
\begin{equation*}
\psi(x_{k+1}) b(x_{k+1}) \le \exp(M_1 h) \psi(x_k) b(x_k),
\end{equation*}
which implies that $x_k \mapsto \psi(x_k) b(x_k)$ is an element
of $K_{M_1}$.

Define $\delta = (1+h)/2$. Since $\psi(x_k) >0$ for $0 \le k \le n$, one
can check that $\psi(\cdot) \in K_{\delta}$ if and only if, for $0 \le k < n$,
\begin{equation*}
|\log( \psi(x_{k+1})) - \log( \psi(x_{k}))| = 
\Big| \log \Big(\frac{\psi(x_{k+1})}{\psi(x_{k})}\Big)\Big| \le \delta h.
\end{equation*}
Given $x_k$ and $x_{k+1}$ with $0 \le k <n$, write $\xi = \theta(x_k)$
and $\eta = \theta(x_{k+1})$.  Define $u:= \tfrac{1}{2} C Q(\theta(x_k))$ and
$v = \tfrac{1}{2} CQ(\theta(x_{k+1}))$, so $\psi(x_k) = 1 +u$
and $\psi(x_{k+1}) = 1 + v$.  Because $u$ and $v$ both lie in the
interval $[0, Ch^2/8]$, \eqref{9.1} implies that $|u-v| \le h/2$,
$|u| \le h/2$ and $|v| \le h/2$.  It follows
that
\begin{equation*}
|\log( \psi(x_{k})) - \log( \psi(x_{k+1}))| = |\log(1+u) - \log(1+v)|
= \Big|\int_{1+v}^{1+u} (1/t) \, dt \Big|.
\end{equation*}
Because $0 \le 1/t \le 1/(1 - h/2) \le 1 + h$ for all $t \in [1+v,1+u]$,
we obtain
\begin{equation*}
|\log( \psi(x_{k})) - \log( \psi(x_{k+1}))| \le (1+h)|u-v| \le (1+h) h/2,
\end{equation*}
which proves the lemma.
\end{proof}

\begin{lem}
\label{lem:9.2}
Let assumptions and notation be as in Lemma~\ref{lem:9.1}.  Let $\delta$
denote a fixed positive real and $s$ a fixed nonnegative real.  Assume,
in addition that $\theta:[a,b] \to [a,b]$ is a Lipschitz map with
$\Lip(\theta) \le c <1$ and that, for $h = (b-a)/n$ and $M_1$ as in
 Lemma~\ref{lem:9.1}, $\exp(-[M_1 + \delta] h)
\ge (1+c)/2$ and $M >0$ is such that $\exp(Mh) \ge 2$.  Define a linear map
$L_s:X_n \to X_n$ by 
\begin{equation*}
L_s(\wrm)(x_k) := w^I(\theta(x_k)) \hat \beta_s(x_k), \quad 0 \le k \le n.
\end{equation*}
Then, if $K_M \subset X_n$ is defined by \eqref{9.3}, $L_s(K_M) \subset
K_{M-\delta}$.
\end{lem}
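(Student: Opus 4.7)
The plan is to fix $\wrm \in K_M$ and check, for each $0 \le k < n$, that the ratio
\[
\rho_k \;:=\; \frac{(L_s\wrm)(x_{k+1})}{(L_s\wrm)(x_k)} \;=\; \frac{w^I(\theta(x_{k+1}))}{w^I(\theta(x_k))} \cdot \frac{\hat\beta_s(x_{k+1})}{\hat\beta_s(x_k)}
\]
lies in $[e^{-(M-\delta)h},\,e^{(M-\delta)h}]$ (the zero vector is trivially handled). Lemma~\ref{lem:9.1} already controls the second factor: since $\hat\beta_s \in K_{M_1}$, it lies in $[e^{-M_1 h},\,e^{M_1 h}]$. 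So it suffices to bound the $w^I$-ratio by $e^{(M-M_1-\delta)h}$ (and its reciprocal likewise), using only that $|\theta(x_{k+1})-\theta(x_k)| \le ch$, which follows from $\Lip(\theta) \le c$.

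The heart of the argument is the sub-claim that for any $\wrm \in K_M$ and any $y_1,y_2 \in [a,b]$ with $|y_2-y_1| \le ch$, one has $w^I(y_2)/w^I(y_1) \le 1 + c(e^{Mh}-1)$. I would prove this by cases on whether $y_1,y_2$ lie in one mesh subinterval or in two adjacent ones; these are exhaustive since $ch < h$. The single-subinterval case is immediate from linearity of $w^I$ and the cone estimate $\wrm_{j+1}/\wrm_j \in [e^{-Mh}, e^{Mh}]$. In the adjacent-subinterval case (say $y_1 \in [x_j, x_{j+1}]$, $y_2 \in [x_{j+1}, x_{j+2}]$), write $y_1 = x_{j+1}-\tau_1 h$ and $y_2 = x_{j+1}+\tau_2 h$ with $\tau_1,\tau_2 \ge 0$ and $\tau_1+\tau_2 \le c$; the ratio is monotone in $\wrm_{j+2}$ (slope $\tau_2$) and in $-\wrm_j$ (slope $\tau_1$), so the extremal configuration is $\wrm_j = e^{-Mh}\wrm_{j+1}$, $\wrm_{j+2} = e^{Mh}\wrm_{j+1}$. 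A short derivative check then maximizes $[1+\tau_2(e^{Mh}-1)]/[1-\tau_1(1-e^{-Mh})]$ at $\tau_2=c$, $\tau_1=0$, giving exactly $1+c(e^{Mh}-1)$.

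Given the sub-claim, the finish is arithmetic: $e^{Mh} \ge 2$ rearranges to $1+c(e^{Mh}-1) \le \tfrac{1}{2}(1+c)e^{Mh}$ (the difference is $(1-c)(e^{Mh}-2)/2 \ge 0$), and $e^{-(M_1+\delta)h} \ge (1+c)/2$ then gives $\tfrac{1}{2}(1+c)e^{Mh} \le e^{(M-M_1-\delta)h}$. Multiplying by the upper bound $e^{M_1 h}$ on $\hat\beta_s(x_{k+1})/\hat\beta_s(x_k)$ yields $\rho_k \le e^{(M-\delta)h}$. Swapping the roles of $y_1$ and $y_2$ in the sub-claim and using $e^{-M_1 h}$ for the lower $\hat\beta_s$ bound gives $\rho_k \ge e^{-(M-\delta)h}$, so $L_s\wrm \in K_{M-\delta}$.

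The main obstacle is the sharpness required in the sub-claim. The crude log-Lipschitz bound $w^I(y_2)/w^I(y_1) \le \exp(c(e^{Mh}-1))$, obtained by bounding $(w^I)'/w^I$ pointwise, is fatally weak: the hypotheses of the lemma only guarantee $(M-M_1-\delta)h \ge \log(1+c)$, not something of order $c e^{Mh}$, so the linearization $1+c(e^{Mh}-1)$ is genuinely necessary and forces the explicit extremal analysis sketched above.
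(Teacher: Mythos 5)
Your proof is correct and follows essentially the same route as the paper's: factor out the $\hat\beta_s$ contribution via Lemma~\ref{lem:9.1}, reduce to bounding the ratio $w^I(\theta(x_{k+1}))/w^I(\theta(x_k))$ by $e^{(M-M_1-\delta)h}$, split into the cases where $\theta(x_k)$ and $\theta(x_{k+1})$ lie in the same or in adjacent mesh subintervals, and substitute the extreme ratios $e^{\pm Mh}$ permitted by the cone $K_M$. Your single sub-claim with the uniform bound $1+c(e^{Mh}-1)$, obtained by optimizing over the triangle $\tau_1+\tau_2\le c$, is a tidier packaging of the paper's two separate case computations, but it leads to the same sufficient conditions ($e^{Mh}\ge 2$ and $e^{-(M_1+\delta)h}\ge(1+c)/2$) used in the same way, so the substance is identical.
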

\begin{proof}
For a fixed $k$, $0 \le k <n$, recall we have defined  $\xi = \theta(x_k)$
and $\eta = \theta(x_{k+1})$.  We must prove that if $h$ and $M$ satisfy the
above constraints and $\wrm \in K_M$, then
\begin{align*}
w^I(\xi) \hat \beta_s(x_k) &\le \exp([M-\delta]h) w^I(\eta) \hat
\beta_s(x_{k+1}),
\\
w^I(\eta) \hat \beta_s(x_{k+1}) &\le \exp([M-\delta]h) w^I(\xi) \hat
\beta_s(x_{k}).
\end{align*}
Using Lemma~\ref{lem:9.1}, we see that $x_k \mapsto \hat \beta_s(x_{k})$
is an element of $K_{M_1}$, so the above inequalities will be satisfied if
\begin{align}
\label{9.5}
w^I(\xi) &\le \exp([M-M_1-\delta]h) w^I(\eta),
\\
\label{9.6}
w^I(\eta) &\le \exp([M- M_1-\delta]h) w^I(\xi).
\end{align}
For notational convenience, we write $M_2 = M_1 + \delta$. By interchanging
the roles of $\xi$ and $\eta$, we can assume that $\eta \le \xi$, and it
suffices to prove that \eqref{9.5} and \eqref{9.6} are satisfied for $M$
and $h$ as in the statement of the Lemma.  Define $j = n-1$ if $\xi \ge
x_{n-1}$ and otherwise define $j$ to be the unique integer, $0 \le j < n-1$,
such that $x_j \le \xi < x_{j+1}$.  Because $0  \le \xi- \eta \le c h <h$,
there are only two cases to consider: either (i) $x_j \le \eta \le \xi$ 
or (ii) $x_{j-1} < \eta <x_j$ and $x_j \le \xi < x_{j+1}$.

We first assume that we are in case (i), so $\xi, \eta \in [x_j,x_{j+1}]$
and $0 \le \xi - \eta \le c h$,  Using \eqref{9.2}, we see that \eqref{9.5}
is equivalent to proving
\begin{multline}
\label{9.5p}
(x_{j+1} - \xi) \wrm_j + (\xi - x_j)  \wrm_{j+1} 
\\
\le
\exp([M-M_2]h)[(x_{j+1} - \eta) \wrm_j + (\eta - x_j)\wrm_{j+1}].
\end{multline}
Subtracting $(x_{j+1} - \eta) \wrm_j + (\eta - x_j)\wrm_{j+1}$ from both
sides of \eqref{9.5p} shows that \eqref{9.5p} will be satisfied if
\begin{multline}
\label{9.5pp}
(\xi - \eta) [\wrm_{j+1} -\wrm_j] 
\\
\le [\exp([M-M_2]h) -1][(x_{j+1} - \eta) \wrm_j + (\eta - x_j)\wrm_{j+1}].
\end{multline}
Equation \eqref{9.5pp} will certainly be satisfied if 
$\wrm_{j+1} \le \wrm_j$,
so we can assume that $\wrm_{j+1} - \wrm_j >0$ and 
$1 < \wrm_{j+1}/\wrm_j \le \exp(Mh)$.
If we divide both sides of \eqref{9.5pp} by $\wrm_j$ and recall that
$\xi-\eta \le ch$, we see that the left hand side of \eqref{9.5pp} is
dominated by $ch[\exp(Mh) -1]$, while the right hand side of \eqref{9.5pp}
is $\ge [\exp([M-M_2]h) -1]h$,  Thus, \eqref{9.5pp} will be satisfied if
\begin{equation}
\label{9.7}
c \le \frac{\exp([M-M_2]h) -1}{\exp(Mh) -1} = \exp(-M_2 h)
+ \frac{\exp(-M_2 h) -1}{\exp(Mh) -1}.
\end{equation}
If $h >0$ is chosen so that $\exp(-M_2 h) \ge (1+c)/2$, a calculation
shows that \eqref{9.7} will be satisfied if
%%\begin{equation*}
%%\label{9.8}
$M \ge \log(2)/h$,
%%\end{equation*}
where $\log$ denotes the natural logarithm.  Thus, if $h >0$ satisfies
\eqref{9.1}, $M \ge \log(2)/h$, and $\exp(-M_2 h) \ge (1+c)/2$, 
\eqref{9.5} is satisfied in case (i).  Under the same conditions on
$h$ and $M$, an exactly analogous argument shows that (in case (i)),
\eqref{9.6} is also satisfied.

We next consider case (ii), so $\xi \in [x_j, x_{j+1}]$, 
$\eta \in [x_{j-1}, x_j]$ and $0 \le \xi - \eta \le ch$. It follows that
$\xi - x_j = c_1 h$ and $x_j - \eta = c_2 h$, where $c_1 \ge 0$,
$c_2 \ge 0$, and $c_1 + c_2 \le c <1$. As before, we need to show that
inequalities \eqref{9.5} and \eqref{9.6} are satisfied. Inequality \eqref{9.6}
takes the form
\begin{multline}
\label{9.9}
w^I(\eta) = \frac{\eta-x_{j-1}}{h} \wrm_j + \frac{x_j- \eta}{h} \wrm(x_{j-1})
\\
\le \exp([M-M_2]h)
\Big[\frac{\xi-x_{j}}{h} \wrm_{j+1} + \frac{x_{j+1}- \xi}{h} \wrm(x_{j})\Big],
\end{multline}
which is equivalent to
\begin{equation}
\label{9.9p}
(\eta-x_{j-1}) + (x_j- \eta) \frac{\wrm(x_{j-1})}{\wrm_j}
\le \exp([M-M_2]h)
\Big[(\xi-x_{j}) \frac{\wrm_{j+1}}{\wrm_j} + (x_{j+1}- \xi)\Big].
\end{equation}
Since $\wrm(x_{j-1})/\wrm_j \le \exp(Mh)$, 
$\wrm_{j+1}/\wrm_j \ge \exp(-Mh)$,
$x_j - \eta = c_2 h$ and $\xi - x_j = c_1 h$, \eqref{9.9p} will be satisfied
if
\begin{equation}
\label{9.9pp}
(1-c_2) + c_2 \exp(Mh) \le \exp([M-M_2]h)[c_1 \exp(-Mh) + (1-c_1)].
\end{equation}
Because $c_2 \le c-c_1$, we have
\begin{equation*}
(1-c_2) + c_2 \exp(Mh) \le (1-c+c_1) + (c-c_1) \exp(Mh),
\end{equation*}
and inequality \eqref{9.9pp} will be satisfied if
\begin{equation}
\label{9.10}
(1+ c_1 -c) + (c-c_1) \exp(Mh) \le \exp(-M_2 h)[c_1 + (1-c_1) \exp(Mh)].
\end{equation}
A necessary condition that \eqref{9.10} be satisfied is that
$\exp(-M_2 h) \ge (c-c_1)/(1-c_1)$.  Since $(c-c_1)/(1-c_1) \le c$ and
$c < (1+c)/2$, we choose $h = (b-a)/n >0$ sufficiently small that
\begin{equation}
\label{9.11}
\exp(-M_2 h) \ge (1+c)/2.
\end{equation}
For this choice of $h$, \eqref{9.10} will be satisfied if
\begin{equation*}
(1+ c_1 -c) + (c-c_1) \exp(Mh) \le \frac{1+c}{2}[c_1 + (1-c_1) \exp(Mh)],
\end{equation*}
which is equivalent to
\begin{equation}
\label{9.12}
(1 + c_1/2)(1-c) \le [(1+c_1)(1-c)/2] \exp(Mh).
\end{equation}
Since $(2+c_1)/(1+c_1) \le 2$, \eqref{9.12} will be satisfied if
\begin{equation}
\label{9.13}
2 \le \exp(Mh).
\end{equation}
Thus \eqref{9.9} will be satisfied if $h$ satisfies \eqref{9.11} and, for
this $h$, $M$ satisfies \eqref{9.13}.

Inequality \eqref{9.5} will be satisfied in case (ii) if
\begin{equation}
\label{9.14}
(\xi-x_j) \frac{\wrm_{j+1}}{\wrm_j} + (x_{j+1} - \xi)
\le \exp([M-M_2]h)
\Big[(\eta-x_{j-1}) + (x_{j}- \eta)
\frac{\wrm(x_{j-1})}{\wrm_j}\Big].
\end{equation}
The same reasoning as above shows that if $h >0$ satisfies \eqref{9.11}
and $M$ then satisfies \eqref{9.13}, \eqref{9.14} will be satisfied.
Details are left to the reader.
\end{proof}

\begin{thm}
\label{thm:9.3}
Let $N$ denote a positive integer. For $1 \le j \le N$, assume
that $\theta_j:[a,b] \to [a,b]$ is a Lipschitz map with $Lip(\theta_j) \le c
<1$, $c$ independent of $j$.  For $1 \le j \le N$, assume that $\beta_j \in
K_{M_0} \setminus \{0\} \subset X_n$, where $M_0$ is independent of $j$.  For
$j \ge 1$, let $C_j$ be a real number with $|C_j| \le C$, where $C$ is
independent of $j$; and for a fixed $s \ge 0$, define $\hat \beta_{j,s} \in
X_n$ by
\begin{equation*}
\hat \beta_{j,s}(x_k) = [1 + \tfrac{1}{2} C_j Q(\theta_j(x_k))] [\beta_j(x_k)]^s,
\quad  0 \le k \le n.
\end{equation*}
Let $\delta >0$ be a given real number and for $j \ge 1$ define
a linear map $L_{j,s}: X_n \to X_n$ by
\begin{equation*}
(L_{j,s} \wrm)(x_k) = \hat \beta_{j,s}(x_k) w^I(\theta_j(x_k)), 
\quad  0 \le k \le n,
\end{equation*}
and a linear map $L_s: X_n \to X_n$ by $L_s = \sum_{j=1}^N L_{j,s}$.  Assume
that $h = (b-a)/n \le 1$ and $C h /4 \le 1$ and define $M_2 = M_1
+ \delta$.  Assume also that $\exp(-M_2 h) \ge (1+c)/2$ and that $M \in \R$ is
such that $\exp(Mh) \ge 2$.  Then we have that $L_s(K_M \setminus \{0\})
\subset K_{M- \delta} \setminus \{0\}$.
\end{thm}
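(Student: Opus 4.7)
The plan is to deduce Theorem~\ref{thm:9.3} from Lemma~\ref{lem:9.2} applied termwise, combined with the fact that $K_{M-\delta}$ is closed under addition as a cone. Since $L_s = \sum_{j=1}^N L_{j,s}$ and each $L_{j,s}$ has exactly the shape of the single-map operator studied in Lemma~\ref{lem:9.2}, this reduces the theorem to a routine composition of previously established facts.

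First, I would verify that Lemma~\ref{lem:9.1} applies to each pair $(\beta_j, C_j)$ to conclude $\hat\beta_{j,s} \in K_{M_1}$. Here a minor subtlety is that the $C_j$ may be negative, whereas Lemma~\ref{lem:9.1} was stated for a single $C$. Re-reading its proof, however, the only use of the constant is via the bound $|u|, |v| \le |C_j| h^2/8 \le h/2$ (which holds since $|C_j| \le C$ and $Ch/4 \le 1$), together with the resulting estimate $1/t \le 1+h$ on $t \in [1+v, 1+u]$; both survive verbatim for $C_j$ of either sign. With $\hat\beta_{j,s} \in K_{M_1}$ in hand, I would apply Lemma~\ref{lem:9.2} to each $L_{j,s}$ using the hypotheses $\mathrm{Lip}(\theta_j) \le c < 1$, $\exp(-M_2 h) \ge (1+c)/2$, and $\exp(Mh) \ge 2$ that are assumed in the statement of the theorem. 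This yields $L_{j,s}(K_M) \subset K_{M-\delta}$ for every $j$.

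Next I would invoke the cone property of $K_{M-\delta}$: if $\wrm^{(1)}, \wrm^{(2)} \in K_{M-\delta}$, then for $0 \le k < n$,
\begin{equation*}
\wrm^{(1)}_{k+1} + \wrm^{(2)}_{k+1} \le \bigl(\wrm^{(1)}_k + \wrm^{(2)}_k\bigr)\exp((M-\delta)h),
\end{equation*}
with the symmetric inequality reversing the roles of $k$ and $k+1$, so $\wrm^{(1)} + \wrm^{(2)} \in K_{M-\delta}$. Iterating, the finite sum $L_s(\wrm) = \sum_{j=1}^N L_{j,s}(\wrm)$ lies in $K_{M-\delta}$ whenever $\wrm \in K_M$.

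Finally, to upgrade the conclusion to $L_s(K_M \setminus \{0\}) \subset K_{M-\delta} \setminus \{0\}$, I would argue strict positivity. The excerpt records that any nonzero element of $K_M$ has all strictly positive coordinates, so $w^I > 0$ on $[a,b]$ and in particular $w^I(\theta_j(x_k)) > 0$. Since $\hat\beta_{j,s}(x_k) = [1 + \tfrac{1}{2} C_j Q(\theta_j(x_k))] [\beta_j(x_k)]^s > 0$ (by the same bound $|C_j| Q/2 \le Ch^2/8 \le 1/2 < 1$ used above, together with $\beta_j(x_k) > 0$), each $L_{j,s}(\wrm)(x_k) > 0$, and hence so is $L_s(\wrm)(x_k)$. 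There is no real obstacle here; the only care needed is in tracking the sign ambiguity of $C_j$ through Lemma~\ref{lem:9.1}, which is handled by the uniform bound $|C_j| \le C$.
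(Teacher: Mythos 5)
Your proposal is correct and follows exactly the paper's argument: apply Lemma~\ref{lem:9.1} to each $\hat\beta_{j,s}$, apply Lemma~\ref{lem:9.2} to each $L_{j,s}$, and use that the cone $K_{M-\delta}$ is closed under addition of nonzero elements. Your extra care about the possible negativity of the $C_j$ (handled via $|C_j|h^2/8\le h/2$ and $1/(1-h/2)\le 1+h$) and the explicit strict-positivity check are details the paper leaves implicit, but they do not change the route.
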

\begin{proof}
  Lemma~\ref{lem:9.1} implies that $x_k \mapsto \hat \beta_{j,s}(x_k)$ is an
  element of $K_{M_1}$, where $M_1 = s M_0 + (1+h)/2$.  Under our hypotheses,
  Lemma~\ref{lem:9.2} implies that $L_{j,s}(K_M \setminus \{0\}) \subset K_{M
    - \delta}\setminus \{0\}$, so $L_{s}(K_M \setminus \{0\}) \subset K_{M -
    \delta}\setminus \{0\}$.
\end{proof}

Our next theorem follows immediately from Theorem~\ref{thm:9.3} and the
remarks at the beginning of this section.

\begin{thm}
\label{thm:9.7}
Let notation and assumptions be as in Theorem~\ref{9.3}.  Then $L_s$ has an
eigenfunction $v \in K_{M-\delta} \setminus \{0\}$, $\|v\|=1$, with eigenvalue
$r >0$.  If $\hat L_s$ denotes the complexification of $L_s$, $r$ is an
eigenvalue of $\hat L_s$ of algebraic multiplicity one; and if $L_s w =
\lambda w$ for some $w \in K_M \setminus \{0\}$, $\lambda =r$, and $w$ is a
positive multiple of $v$.  If $z$ is an eigenvalue of $\hat L_s$ and $z \neq r$,
then $|z| <r$. If $x \in K_M\setminus\{0\}$, $\lim_{k \rightarrow \infty}
\|L^k(x)/\|L^k(x)\| -v\| =0$ and the convergence rate is geometric.
\end{thm}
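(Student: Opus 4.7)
The plan is to deduce Theorem~\ref{thm:9.7} as a direct consequence of the Birkhoff-Hopf theorem applied in the cone $K_M$, with Theorem~\ref{thm:9.3} furnishing the essential cone-contraction property. The machinery recalled at the beginning of Section~\ref{sec:compute-sr} (in particular Theorems~2.1 and~2.3 of \cite{U}) then packages this into all of the required spectral conclusions, so the real content of the argument is just to check that the projective diameter $\Delta(L_s;K_M)$ is finite.

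First, Theorem~\ref{thm:9.3} tells us that $L_s(K_M \setminus \{0\}) \subset K_{M-\delta} \setminus \{0\}$, so $L_s$ maps $K_M \setminus \{0\}$ into a subcone in which every nonzero pair of elements is comparable with respect to $K_M$. Applying Lemma~2.12 of \cite{F} (as quoted earlier in this section) with the cone parameters $M$ and $M' = M-\delta < M$, I obtain
\begin{equation*}
\Delta(L_s; K_M) \le 2 \log\Big(\frac{2M - \delta}{\delta}\Big) + 2(M - \delta)(b-a) < \infty.
\end{equation*}
The Birkhoff-Hopf theorem then implies that $L_s$ is a strict contraction on $K_M \setminus \{0\}$ with respect to Hilbert's projective metric $d(\cdot,\cdot;K_M)$, with contraction ratio $\tanh(\Delta(L_s;K_M)/4) < 1$.

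Setting $S = \{x \in \interior(K_M) : \|x\| = 1\}$, which is a complete metric space under the restriction of $d(\cdot,\cdot;K_M)$, the normalized map $\Phi(x) = L_s(x)/\|L_s(x)\|$ is a well-defined strict contraction on $S$ and hence has a unique fixed point $v \in S$. By construction $v$ is an eigenvector of $L_s$ with a positive eigenvalue $r$. Theorem~2.1 of \cite{U} then supplies explicit constants $M_* > 0$ and $c_* < 1$ such that $\|L_s^k(x)/\|L_s^k(x)\| - v\| \le M_* c_*^k$ for every $x \in K_M \setminus \{0\}$, which gives the geometric convergence assertion. Theorem~2.3 of \cite{U} further shows that $r$ is an algebraically simple eigenvalue of the complexification $\hat L_s$ and that the spectral clearance $q(\hat L_s) < 1$, which is exactly the statement that every other eigenvalue $z$ of $\hat L_s$ satisfies $|z| < r$.

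For the uniqueness within the cone, suppose $L_s w = \lambda w$ with $w \in K_M \setminus \{0\}$. Since $L_s w \in K_{M-\delta} \setminus \{0\}$ has strictly positive entries, we must have $\lambda > 0$; iterating yields $L_s^k(w)/\|L_s^k(w)\| = w/\|w\|$ for every $k \ge 1$, and passing to the limit $k \to \infty$ and invoking the geometric convergence above forces $w/\|w\| = v$, so $w$ is a positive multiple of $v$ and $\lambda = r$. The only real obstacle here is bookkeeping: once Theorem~\ref{thm:9.3} supplies the cone contraction property and Lemma~2.12 of \cite{F} bounds the projective diameter, the remaining spectral statements follow mechanically from the finite-dimensional Birkhoff-Hopf results collected in \cite{U}.
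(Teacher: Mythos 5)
Your proposal is correct and follows essentially the same route as the paper: the paper's proof is precisely to combine Theorem~\ref{thm:9.3} with the Birkhoff--Hopf machinery and the bound from Lemma 2.12 of \cite{F} recalled at the start of Section~\ref{sec:compute-sr}, which is what you have written out in detail (your projective-diameter bound with $M'=M-\delta$ matches the paper's formula). Your added argument for uniqueness of the eigenvector in $K_M\setminus\{0\}$ via iteration and the geometric convergence is a valid way to fill in that part of the statement.
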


\begin{remark}
\label{rem:9.7}
With the aid of Theorem~\ref{thm:9.3}, we could also have used the
theory of $u_0$-positive linear operators (see \cite{X} and \cite{Y})
to derive Theorem~\ref{thm:9.7}.
\end{remark}

\begin{remark}
\label{rem:9.8}
Since the linear maps $A_s$ and $B_s$ are both of the form of the map
$L_s$ in Theorem~\ref{thm:9.3}, Theorem~\ref{thm:9.7} implies the
desired spectral properties of $A_s$ and $B_s$.  With greater care it is
possible to use results in \cite{U} to estimate the spectral
clearance $q(L_s)$ of $L_s$.
\end{remark}

\begin{remark}
\label{rem:9.9}
We claim that there is a constant $E$, which can be easily estimated, such
that, for $h = (b-a)/n$ sufficiently small,
\begin{equation*}
r(B_s) \le r(A_s)(1 + E h^2).
\end{equation*}
(Of course we already know that $r(A_s) \le r(B_s)$.) For a fixed $s \ge 0$,
let $\beta_j(\cdot)$ and $\theta_j(\cdot)$ be as in Theorem~\ref{thm:9.3}.
We know that $A_s$ and $B_s$ are of the form of $L_s$ in
Theorem~\ref{thm:9.3}, so we can write, for $0 \le k \le n$,
\begin{align*}
(A_s \wrm)(x_k) &= \sum_{j=1}^N [ 1 +(C_j/2) Q(\theta_j(x_k))] [\beta_j(x_k)]^s
w^I(\theta_j(x_k),
\\
(B_s \wrm)(x_k) &= \sum_{j=1}^N [ 1 +(D_j/2) Q(\theta_j(x_k))] [\beta_j(x_k)]^s
w^I(\theta_j(x_k).
\end{align*}
We assume that $h \le 1$ and $C h/4 \le 1$, where $C$ is a positive
constant such that $\max(|C_j|, |D_j|) \le C$ for $1 \le j \le N$. We
assume also that for $1 \le j \le N$, $C_j \le D_j$.  Let
$K= \{ \wrm \in X_n \, | \, \wrm(x_k) \ge 0 \text{ for } 0 \le k \le n\}$, so
$A_s(K) \subset K$ and $B_s(K) \subset K$.  Define $\mu \ge 1$ by
\begin{equation*}
\mu = \sup\{ [1 + \frac{D_j}{2} Q(\theta_j(x_k))][ 1 + \frac{C_j}{2}
Q(\theta_j(x_k))]^{-1}: 1 \le j \le N, 0 \le k \le N\} \ge 1.
\end{equation*}
Then for all $\wrm \in K$ and $0 \le k \le n$,
$(B_s(\wrm))(x_k) \le \mu (A_s(\wrm))(x_k)$, which implies that
$r(B_s) \le \mu r(A_s)$.  Since $Q(u) \le h^2/4$, a little thought shows
that  $\mu \le (1 + Ch^2/8)(1 - Ch^2/8)^{-1} \le 1 + E h^2$,
which gives the desired estimate.
\end{remark}

\section{Log convexity of the spectral radius of 
$\Lambda_s$}
\label{sec:logconvex}
Throughout this section we shall assume that hypotheses (H4.1), (H4.2), and
(H4.3) in Section~\ref{sec:exist} are satisfied and we shall also assume that
$H$ is a bounded, open, subset of $\R$. As in Section~\ref{sec:exist}, we
shall write $X= C^m(\bar H)$ and $Y= C(\bar H)$.  For $s \in \R$, we define
$\Lambda_s: X \to X$ and $L_s:Y \to Y$ by
\begin{align}
\label{2.1}
(\Lambda_s(w))(x) &= \sum_{\be \in \B} [g_{\be}(x)]^s w (\theta_{\be}(x)),
\\
\label{2.2}
(L_s(w))(x) &= \sum_{\be \in \B} [g_{\be}(x)]^s w (\theta_{\be}(x)).
\end{align}
Theorem~\ref{thm:1.1} implies that $r(\Lambda_s)$ is an algebraically simple
eigenvalue of $\Lambda_s$ for $s \in \R$ and that $\sup \{|z|: z \in
\sigma(\Lambda_s), z \neq r(\Lambda_s)\} < r(\Lambda_s)$, 
where $\sigma(\Lambda_s)$ denotes the spectrum of $\Lambda_s$.

Let $\hat X$ denote the complexification of $X$, so $\hat X$ is the Banach
space of $C^m$ maps $f:H \to \C$ such that $x \mapsto
(D^{k}f)(x)$ extends continuously to $\bar H$ for all 
$0 \le k \le m$. For $s \in \C$ one can define
$\hat \Lambda_s: \hat X \to \hat X$ by
\begin{equation*}
%%\label{2.3}
(\hat \Lambda_s(w))(x) = \sum_{\be \in \B} (g_{\be}(x))^s w (\theta
_{\be}(x)):= \sum_{\be \in \B} \exp(s \log g_{\be}(x))   w(\theta_{\be}(x)).
\end{equation*}
The reader can verify that $s \mapsto \hat \Lambda_s \in \Lc(\hat X, \hat
X)$ is an analytic map. Because $r(\hat \Lambda_s)$ is an algebraically simple
eigenvalue of $\hat \Lambda_s$ for $s \in \R$ and $\sup \{|z|: z \in
\sigma(\Lambda_s), z \neq r(\Lambda_s)\} < r(\Lambda_s)$, it follows from the
kind of argument used on pages 227-228 of \cite{BB} that there is an open
neighborhood $U$ of $\R$ in $\C$ and the map $s \in U \mapsto r(\hat
\Lambda_s)$ is analytic on $U$.
\begin{thm}
\label{thm:2.1}
Assume that hypotheses (H4.1), (H4.2), and (H4.3) are satisfied with $m \ge 1$
and that $H \subset \R$ is a bounded, open set. For $s \in
\R$, let $\Lambda_s$ and $L_s$ be defined by \eqref{2.1} and \eqref{2.2}.
Then we have that $s \mapsto r(\Lambda_s)$ is log convex, i.e., $s
\mapsto log(r(\Lambda_s))$ is convex on $[0,\infty)$.
\end{thm}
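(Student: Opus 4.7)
The plan is to realize $r(\Lambda_s)$ as a pointwise limit of elementary log convex functions, exploiting the iteration formula \eqref{1.6} together with the convergence statement \eqref{1.12} from Remark \ref{rem:1.2}. The only nontrivial analytic input is H\"older's inequality; the rest is packaging of results already established.

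First I would take the constant function $u \equiv 1$ on $\bar H$ and fix an arbitrary point $x_0 \in \bar H$. By the iteration formula \eqref{1.6} applied to $\Lambda_s$ (with $g_\beta$ replaced by $[g_\beta]^s$), one has
\begin{equation*}
(\Lambda_s^n u)(x_0) = \sum_{\omega \in \B_n} [g_\omega(x_0)]^s =: F_n(s).
\end{equation*}
By \eqref{1.12}, $\lim_{n \to \infty} r(\Lambda_s)^{-n} F_n(s) = s_u v_s(x_0)$, a strictly positive constant. Taking logarithms, dividing by $n$, and letting $n \to \infty$ gives
\begin{equation*}
\log r(\Lambda_s) = \lim_{n \to \infty} \frac{1}{n} \log F_n(s),
\end{equation*}
which reduces the problem to showing that each $(1/n)\log F_n$ is convex in $s$.

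Next I would check log convexity of $F_n$ directly. For $s_1, s_2 \in \R$ and $\alpha \in (0,1)$, H\"older's inequality with conjugate exponents $p = 1/\alpha$ and $q = 1/(1-\alpha)$ gives
\begin{equation*}
F_n(\alpha s_1 + (1-\alpha) s_2) = \sum_{\omega \in \B_n} \bigl([g_\omega(x_0)]^{s_1}\bigr)^{\alpha} \bigl([g_\omega(x_0)]^{s_2}\bigr)^{1-\alpha} \le F_n(s_1)^{\alpha} F_n(s_2)^{1-\alpha}.
\end{equation*}
Taking logs yields convexity of $\log F_n$, hence of $(1/n) \log F_n$. Since a pointwise limit of convex functions is convex (wherever it is finite, which is everywhere here by Theorem \ref{thm:1.1}, which guarantees $r(\Lambda_s) > 0$), the function $s \mapsto \log r(\Lambda_s)$ is convex on $[0,\infty)$, i.e., $r(\Lambda_s)$ is log convex.

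There is no real obstacle: all the deep work is already contained in Theorem \ref{thm:1.1} and Remark \ref{rem:1.2}, which supply positivity of $r(\Lambda_s)$ and the identification of $r(\Lambda_s)$ with the exponential growth rate of $\Lambda_s^n u$ at any point $x_0$. The argument as stated in fact works for all real $s$, not just $s \ge 0$, so one obtains log convexity on any interval on which the family $\{\Lambda_s\}$ is defined and the hypotheses apply.
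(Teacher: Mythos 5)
Your argument is correct, but it follows a genuinely different route from the paper's. The paper fixes $s_0<s_1$ and $t\in(0,1)$, forms the interpolated test function $w_t=v_{s_0}^{1-t}v_{s_1}^{t}$ from the two strictly positive eigenfunctions supplied by Theorem~\ref{thm:1.1}, applies H\"older's inequality pointwise to get the subinvariance relation $L_{s_t}w_t\le r(L_{s_0})^{1-t}r(L_{s_1})^{t}\,w_t$ as in \eqref{2.4}, and then concludes via the Gel\cprime fand-type identity \eqref{2.5} for the spectral radius evaluated along the positive function $w_t$. You instead apply H\"older to the finite sums $F_n(s)=(\Lambda_s^n u)(x_0)=\sum_{\w\in\B_n}[g_\w(x_0)]^s$ coming from \eqref{1.6}, showing each is log convex in $s$, and identify $\log r(\Lambda_s)$ with $\lim_n \tfrac1n\log F_n(s)$ via \eqref{1.12}; log convexity then survives the pointwise limit. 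Both proofs have H\"older as the sole analytic input, but yours is structurally Kingman's argument \cite{EE} transported to the operator setting, and it is essentially the ``different argument'' the authors allude to at the end of Section~\ref{sec:logconvex} as a route to a more general statement. One remark: as written you still lean on the existence of the positive eigenfunction, since \eqref{1.12} (equivalently \eqref{1.10}) is what lets you evaluate at a single point $x_0$ and know the normalized iterates converge to a strictly positive limit. If you wanted to make the proof genuinely independent of Theorem~\ref{thm:1.1}'s eigenfunction, you could instead use that for a positive operator on $C(\bar H)$ one has $\|L_s^n\|_\infty=\sup_{x\in\bar H}F_n(s,x)$, note that a supremum of log convex functions is log convex, and invoke only the Gel\cprime fand formula $r(L_s)=\lim_n\|L_s^n\|_\infty^{1/n}$; that is exactly the generalization the authors say they omit. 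Your closing observation that the argument yields log convexity on all of $\R$ is correct and also holds for the paper's proof.
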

\begin{proof}
Because Theorem~\ref{thm:1.1} implies that $r(L_s) = r(\Lambda_s)$ for all real
$s$, it suffices to take $s_0 <s_1$, and $0 < t < 1$ and prove that
\begin{equation*}
r(L_{(1-t)s_0 + t s_1}) \le r(L_{s_0})^{1-t} r(L_{s_1})^t.
\end{equation*}
We shall use an old trick (see \cite{CC} and the references therein). Let
$v_{s_j}(x)$, $j=0,1$ denote the strictly positive eigenfunction of $L_{s_j}$
which is ensured by Theorem~\ref{thm:1.1}.  Then
%%\begin{equation*}
$L_{s_j} v_{s_j} = r(L_{s_j}) v_{s_j}$.
%%\end{equation*}
For a fixed $t$, $0<t<1$, define $s_t = (1-t)s_0 + t s_1$ and
\begin{equation*}
w_t(x)  = [v_{s_0}(x)]^{1-t}[v_{s_1}(x)]^t.
\end{equation*}
Then, using H\"older's inequality, we find that
\begin{multline}
\label{2.4}
(L_{s_t}(w_t))(x) = \sum_{\be \in \B} [g_{\be}(x)^{s_0} v_{s_0}(x)]^{1-t}
 [g_{\be}(x)^{s_1} v_{s_1}(x)]^t
\\
\le \Big(\sum_{\be \in \B} g_{\be}(x)^{s_0} v_{s_0}(x)\Big)^{1-t}
\Big(\sum_{\be \in \B}  g_{\be}(x)^{s_1} v_{s_1}(x)\Big)^t
= [r(L_{s_0})^{1-t}r(L_{s_1})^t] w_t(x).
\end{multline}
Because $w_t(x) >0$ for all $x \in \bar H$, a standard argument (see Lemma 5.9
in \cite{N-P-L}) shows that
\begin{equation}
\label{2.5}
r(L_{s_t}) = \lim_{k \rightarrow \infty} \|L_{s_t}^k\|^{1/k} 
 = \lim_{k \rightarrow \infty} \|L_{s_t}^k(w_t)\|^{1/k} .
\end{equation}
Using inequalities \eqref{2.4} and \eqref{2.5}, we see that
%%\begin{equation*}
$r(L_{s_t}) \le r(L_{s_0})^{1-t} r(L_{s_1})^t$.
%%\end{equation*} 
\end{proof}

In general, if $V$ is a convex subset of a vector space $X$, we shall call
a map $f:V \to [0, \infty)$ log convex if (i) $f(x) = 0$ for all $x \in V$
or (ii) $f(x) >0$ for all $x \in V$ and $x \mapsto \log(f(x))$ is convex.
Products of log convex functions are log convex, and H\"olders inequality
implies that sums of log convex functions are log convex.

Results related to Theorem~\ref{thm:2.1} can be found in \cite{CC},
\cite{DD}, \cite{EE}, \cite{FF}, \cite{GG}, and \cite{HH}. Note that
the terminology {\it super convexity} is used to denote log convexity
in \cite{DD} and \cite{EE}, presumably because any log convex function
is convex, but not conversely.  Theorem~\ref{thm:2.1}, while adequate for
our immediate purposes, can be greatly generalized by a different argument
that does not require existence of strictly positive eigenfunctions.  This
generalization (which we omit) contains Kingman's matrix log convexity
result in \cite{EE} as a special case.

In our applications, the map $s \mapsto r(L_s)$ will usually be strictly
decreasing on an interval $[s_1,s_2]$ with $r(L_{s_1}) >1$ and
$r(L_{s_2}) <1$, and we wish to find the unique $s_* \in (s_1,s_2)$ such that
$r(L_{s_*}) =1$.  The following hypothesis ensures that $s \mapsto r(L_s)$
is strictly decreasing for all $S$.

\noindent (H8.1): Assume that $g_{\be}(\cdot)$, $\be \in \B$ satisfy the
conditions of (H4.1).  Assume also that there exists an integer $\mu \ge 1$
such that $g_\w(x) <1$ for all $\w \in \B_{\mu}$ and all $x \in \bar H$.

\begin{thm}
\label{thm:2.2}
Assume hypotheses (H4.1), (H4.2), (H4.3), and (H8.1) are satisfied. Then the
map $s \mapsto r(\Lambda_s)$, $s \in \R$, is strictly decreasing and real
analytic and $\lim_{s \rightarrow \infty} r(\Lambda_s) =0$.
\end{thm}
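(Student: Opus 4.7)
The plan is to assemble three ingredients: (i) the limit at $+\infty$, (ii) the log convexity already established in Theorem~\ref{thm:2.1}, and (iii) the analyticity indicated in the paragraph preceding that theorem. Strict monotonicity will then fall out of (i)+(ii) by a short convex-function argument.

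First I would show $\lim_{s\to\infty} r(\Lambda_s)=0$ using hypothesis (H8.1). Since $\bar H$ is compact and $\B_{\mu}$ is finite, (H8.1) together with continuity of each $g_{\w}$ yields a constant $\rho<1$ with $g_{\w}(x)\le \rho$ for all $\w\in\B_{\mu}$ and $x\in\bar H$. From the identity
\begin{equation*}
(L_s^{\mu} w)(x) = \sum_{\w\in\B_{\mu}}[g_{\w}(x)]^s w(\theta_{\w}(x)),
\end{equation*}
one gets, for $s\ge 0$, $\|L_s^{\mu}\|_{\Lc(Y,Y)}\le |\B_{\mu}|\,\rho^{s}$, hence by the Gelfand formula $r(L_s) = r(L_s^{\mu})^{1/\mu}\le (|\B_{\mu}|\rho^s)^{1/\mu}\to 0$ as $s\to\infty$. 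By Theorem~\ref{thm:1.1}, $r(\Lambda_s)=r(L_s)$, so the same limit holds for $r(\Lambda_s)$.

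Next I would upgrade Theorem~\ref{thm:2.1} from log convexity to \emph{strict} decrease. Inspecting the proof of Theorem~\ref{thm:2.1}, the H\"older argument uses only $s_0<s_1$ in $\R$ together with the strictly positive eigenfunctions $v_{s_0}, v_{s_1}$ from Theorem~\ref{thm:1.1}; since Theorem~\ref{thm:1.1} is valid for every $s\in\R$, the function $\phi(s):=\log r(\Lambda_s)$ is in fact convex on all of $\R$. Suppose for contradiction that $\phi$ is not strictly decreasing, so $\phi(a)\le\phi(b)$ for some $a<b$. For any $y>b$ convexity gives
\begin{equation*}
\phi(b)\le \frac{y-b}{y-a}\phi(a)+\frac{b-a}{y-a}\phi(y),
\end{equation*}
which rearranges to $\phi(y)\ge \phi(b)+\tfrac{y-b}{b-a}\bigl(\phi(b)-\phi(a)\bigr)\ge \phi(b)$, contradicting $\phi(y)\to-\infty$. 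Hence $s\mapsto r(\Lambda_s)$ is strictly decreasing on $\R$.

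For analyticity, I would invoke the observation already recorded in the excerpt just before Theorem~\ref{thm:2.1}: the map $s\mapsto\hat\Lambda_s\in\Lc(\hat X,\hat X)$, with $(\hat\Lambda_s w)(x)=\sum_{\be\in\B}\exp(s\log g_{\be}(x))w(\theta_{\be}(x))$, is an entire analytic family of bounded operators, and for each real $s$ the spectral radius $r(\hat\Lambda_s)$ is an algebraically simple eigenvalue separated from the remainder of $\sigma(\hat\Lambda_s)$. The standard Kato--Rellich-type perturbation argument (carried out as in pages 227--228 of \cite{BB}) then produces an open $U\subset\C$ containing $\R$ on which the isolated eigenvalue and its spectral projection depend holomorphically on $s$; restricting to $\R$ gives real analyticity of $s\mapsto r(\Lambda_s)$.

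I expect no real obstacle. The one place to be careful is verifying that Theorem~\ref{thm:2.1}'s proof really delivers log convexity on all of $\R$ (and not merely on $[0,\infty)$ as its statement reads), since the convex combination trick at infinity only rules out a decreasing failure when $\phi$ is convex on a neighborhood of $+\infty$; this is immediate from the H\"older step, which is insensitive to the sign of $s$.
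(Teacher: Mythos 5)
Your proposal is correct, and two of its three ingredients (the limit at $+\infty$ via (H8.1) and the bound $g_{\w}\le\rho<1$ on $\B_{\mu}$, and the analyticity deferred to the perturbation discussion preceding Theorem~\ref{thm:2.1}) coincide with what the paper does. Where you genuinely diverge is in the proof of strict monotonicity. The paper argues directly: for $s<t$, hypothesis (H8.1) gives a constant $c=c(s,t)<1$ (essentially $\rho^{t-s}$) with $g_{\w}(x)^{t}\le c\,g_{\w}(x)^{s}$ for all $\w\in\B_{\mu}$, so applying $L_t^{\mu}$ to the strictly positive eigenfunction $v_s$ of $L_s^{\mu}$ yields $L_t^{\mu}v_s\le c\,r(L_s)^{\mu}v_s$ pointwise, whence $r(L_t)^{\mu}\le c\,r(L_s)^{\mu}<r(L_s)^{\mu}$ by the positivity lemma. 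You instead extend the H\"older argument of Theorem~\ref{thm:2.1} to all of $\R$ (correctly observing it is insensitive to the sign of $s$) and then deduce strict decrease from convexity of $\log r(\Lambda_s)$ together with $\log r(\Lambda_s)\to-\infty$; your three-point convexity computation is sound. The trade-off: the paper's comparison is self-contained (it does not need Theorem~\ref{thm:2.1}) and is quantitative, giving the explicit decay rate $r(L_t)\le\rho^{(t-s)/\mu}\,r(L_s)$, while your route is softer and shows that strict decrease is in fact an automatic consequence of log convexity once the spectral radius is known to vanish at infinity. Both are complete proofs of the stated theorem.
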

\begin{proof}
  If $L_s: C(\bar H) \to C(\bar H)$ is given by \eqref{1.2}, it is a standard
  result that $r(L_s^{\nu}) = (r(L_s))^{\nu}$ and $r(\Lambda_s^{\nu}) =
  (r(\Lambda_s))^{\nu}$ for all integers $\nu \ge 1$, and
  Theorem~\ref{thm:1.1} implies that $r(L_s) = r(\Lambda_s)$.  Thus it
  suffices to prove that for some positive integer $\nu$, $s \mapsto
  r(L_s^{\nu})$ is strictly decreasing and $\lim_{s \rightarrow \infty}
  r(L_s^{\nu}) =0$.

Suppose that $K$ denotes the set of nonnegative functions in $C(\bar H)$ and
$A: C(\bar H) \to C(\bar H)$ is a bounded linear map such that $A(K)
\subset K$.  If there exists $w \in C(\bar H)$ such that $w(x) > 0$ for
all $x \in \bar H$ and if $(A(w))(x) \le a w(x)$ for all $x \in \bar H$,
it is well-known (and easy to verify) that $r(A) \le a$, where $r(A)$
denotes the spectral radius of $A$.  In our situation, we take $\nu = \mu$,
where $\mu$ is as in (H8.1), and $A = (L_s)^{\mu}$. If $s <t$ and $v_s$ is the
strictly positive eigenfunction for $(L_s)^{\mu}$, (H8.1) implies that
there is a constant $c <1$, $c = c(s,t)$, such that $c g_{\w}(x)^s \ge
g_\w(x)^t$ for all $\w \in \B_{\mu}$ and $x \in H$.  Thus we find that
\begin{equation*}
c r(L_s)^{\mu} v_s(x) = \sum_{\w \in \B_{\mu}} c g_{\w}(x)^s v_s(\theta_\w(x))
\ge \sum_{\w \in \B_{\mu}} g_{\w}(x)^t v_s(\theta_\w(x)) = (L_t^{\mu}(v_s))(x).
\end{equation*}
It follows that $r(L_t)^{\mu} \le c(s,t) r(L_s)^{\mu}$, so $r(L_t) < r(L_s)$,
for $s < t$.  Because $0 < g_\w(x) <1$ for all $x \in \bar H$ and $\w \in
\B_{\mu}$, it is also easy to see that $\lim_{t \rightarrow \infty}
\|(L_t)^{\mu}\| =0$; and since $\|(L_t)^{\mu}\| \ge r(L_t^{\mu})$, we see that
$\lim_{t \rightarrow \infty} r(L_t^{\mu}) =0$.
\end{proof}

\begin{remark}
\label{rem:2.3}
It is easy to construct examples for which (H8.1) is satisfied for some
$\mu >1$, but not satisfied for $\mu =1$.  The functions $\theta_1(x):=
9/(x+1)$ and $\theta_2(x):= 1/(x+2)$ both map the closed interval $\bar H
= [1/11, 9]$ into itself. There is a unique nonempty compact set $J \subset
\bar H$ such that
%%\begin{equation*}
$J = \theta_1(J) \cup \theta_2(J)$.
%%\end{equation*}
For $s \in \R$, define $L_s:C(\bar H) \to C(\bar H)$ by
\begin{equation*}
(L_sw)(x):= \sum_{j=1}^2 |D \theta_j(x)|^s w(\theta_j(x)):=
\sum_{j=1}^2 g_j(x)^s w(\theta_j(x)),
\end{equation*}
where $D:= d/dx$.  The Hausdorff dimension of $J$ is the unique $s = s_*$,
$0 < s_* <1$, such that $r(L_s) =1$.  Our previous remarks show that
\begin{equation*}
(L_s^2 w)(x) = \sum_{j=1}^2 \sum_{k=1}^2 |D(\theta_j \circ \theta_k)(x)|^s
w(\theta_j \circ \theta_k)(x)).
\end{equation*}
One can check that (H8.1) is not satisfied for $\mu=1$, but is satisfied
for $\mu =2$.
\end{remark}
\begin{remark}
\label{rem:2.4}
Assume that the assumptions of Theorem~\ref{thm:2.2} are satisfied and define
$\psi(x) = \log(r(L_s)) = \log(r(\Lambda_s))$ (where $\log$ denotes the
natural logarithm), so $s \mapsto \psi(s)$ is a convex, strictly
decreasing function with $\psi(0) >1$ (unless $|\B| = p =1$) and
$\lim_{s \rightarrow \infty} \psi(s) = - \infty$. We are interested in finding
the unique value of $s$ such that $\psi(s) =0$. In general suppose that
$\psi:[s_1,s_2] \to \R$ is a continuous, strictly decreasing, convex
function such that $\psi(s_1) >0$ and $\psi(s_2) <0$, so there exists
a unique $s =s_* \in (s_1,s_2)$ with  $\psi(s_*) =0$.  If $t_1$ and $t_2$ are
chosen so that $s_1 \le t_1 < t_2 \le s_*$ and $t_{k+1}$ is obtained from
$t_{k-1}$ and $t_k$ by the secant method, an elementary argument show that
$\lim_{k \rightarrow \infty} t_k = s_*$.  If $s_* \le t_2 < t_1 < s_2$ and
$s_1 \le t_3$, a similar argument shows that $\lim_{k \rightarrow \infty} t_k
= s_*$. If $\psi \in C^3$, elementary numerical analysis implies
that the rate of convergence is faster than linear ($= (1 + \sqrt{5})/2)$.
In our numerical work, we apply these observations, not directly to $\psi(s) =
\log(r(\Lambda_s))$, but to convex decreasing functions which closely
approximate $\log(r(\Lambda_s))$.
\end{remark}

One can also ask whether the maps $s \mapsto r(B_s)$ and $s \mapsto r(A_s)$
are log convex, where $A_s$ and $B_s$ are the previously described
approximating matrices for $L_s$.  An easier question is whether the map
$s \mapsto r(M_s)$ is log convex, where $A_s$ and $B_s$ are obtained from
$M_s$ by adding error correction terms.  We shall prove that $s \mapsto
r(M_s)$ is log convex.

First, we need to recall a useful theorem of Kingman \cite{EE}.  Let
$M(s) = (a_{ij}(s))$ be an $m \times m$ matrix whose entries $a_{ij}(s)$ are
either strictly positive for all $s$ in a fixed interval $J$ or are
identically zero for all $s \in J$. Assume that $s \mapsto a_{ij}(s)$ is
log convex on $J$ for $1 \le i,j \le m$.  Under these assumptions,
Kingman \cite{EE} has proved that $s \mapsto r(M_s)$ is log convex.

Let $n \ge 2$ be a positive integer, and for $a < b$ given real numbers,
define $x_k = a + kh$, $-1 \le k \le n+1$, $h = (b-a)/n$. Let $X_n$
denote the vector space of real valued maps $\wrm:\{x_k \, | \, 0 \le k \le n\}
\to \R$, so $X_n$ is a real vector space linearly isomorphic to $\R^{n+1}$.
As usual, if $\wrm \in X_n$, extend $\wrm$ to a map $w^I:[a,b] \to \R$ by
linear interpolation, so
\begin{equation*}
w^I(u) = \frac{u-x_k}{h} \wrm(x_{k+1}) + \frac{x_{k+1}-u}{h} \wrm(x_{k}), \qquad
x_k \le u \le x_{k+1}, \quad 0 \le k \le n.
\end{equation*}
For $1 \le j \le N$, assume that $\theta_j:[a,b] \to [a,b]$ are given
maps and assume that $g_j:[a,b] \to (0, \infty)$ are given positive
functions.  For $s \in \R$, define a linear map $M_s:X_n \to X_n$ by
\begin{equation*}
M_s \wrm(x_k) = \sum_{j=1}^N [g_j(x_k)]^s f^I(\theta_j(x_k)), \quad 0 \le k \le n,
\end{equation*}
so if $\wrm(x_k) \ge 0$ for $0 \le k \le n$, $g(x_k) \ge 0$ for $0 \le k \le n$.
We can write $M_s \wrm(x_k) = \sum_{m=0}^n a_{km}(s) \wrm(x_m)$, where for
$0 \le k$, $m \le n$,
\begin{multline*}
a_{km}(s) = \sum_{j, x_{m-1} \le \theta_j(x_k) \le x_m} [g_j(x_k)]^s
[\theta_j(x_k) - x_{m-1}]/h
\\
+  \sum_{j, x_{m} \le \theta_j(x_k) \le x_{m+1}} [g_j(x_k)]^s
[x_{m+1} - \theta_j(x_k)]/h.
\end{multline*}
If, for a given $k$ and $m$, there is no $j$, $1 \le j \le N$, with
$ x_{m-1} \le \theta_j(x_k) \le x_{m+1}$, we define $a_{km} =0$.
Since the sum of log convex functions is log convex, $s \mapsto a_{km}(s)$ is
log convex on $\R$.  It follows from Kingman's theorem that $s \mapsto r(M_s)$
is log convex, where $r(M_s)$ denotes the spectral radius of $M_s$.

\frenchspacing

\end{document}